\theoremstyle{plain}
\newtheorem{theorem}{Theorem}[section]
\newtheorem{proposition}[theorem]{Proposition}
\newtheorem{lemma}[theorem]{Lemma}
\newtheorem{corollary}[theorem]{Corollary}
\theoremstyle{definition}
\newtheorem{definition}[theorem]{Definition}
\newtheorem{example}[theorem]{Example}
\theoremstyle{remark}
\newtheorem{remark}[theorem]{Remark}
\numberwithin{equation}{section}
\newcommand{\bN}{\mathbb{N}}
\newcommand{\bP}{\mathbb{Z}_+}
\newcommand{\bQ}{\mathbb{Q}}
\newcommand{\bZ}{\mathbb{Z}}
\newcommand{\Sym}{\mathrm{Sym}}
\newcommand{\suchthat}{\;|\;}
\newcommand{\alpx}{\mathbf{x}}
\newcommand{\cat}{\mathrm{Cat}}
\newcommand{\ds}[2]{\big\langle {#1}\big\rangle_{#2}}
\newcommand{\schub}[1]{\mathfrak{S}_{#1}}
\newcommand{\qsym}{\mathrm{QSym}} 
\newcommand{\supp}{\mathrm{Supp}}
\newcommand{\bfc}{{\bf c}}
\newcommand{\alpxn}[1]{\mathbf{x}_{#1}} 
\newcommand{\alpxplus}{\mathbf{x}_+}
\newcommand{\psymb}{\mathsf{P}} 
\newcommand{\qsymb}{\mathsf{Q}} 
\newcommand{\slide}{\mathfrak{F}} 
\newcommand{\fund}{L} 
\newcommand{\backforest}{\overleftarrow{\mathfrak{P}}} 
\newcommand{\forestpoly}{\mathfrak{P}} 
\newcommand{\rootlist}{\mathsf{rl}}
\newcommand{\IF}{\mathsf{IndFor}}
\newcommand{\lbs}{\mathsf{LBS}}
\newcommand{\can}{\mathrm{can}}
\newcommand{\interval}{\mathsf{INT}}
\newcommand{\wf}{\Omega}
\newcommand{\wfcorr}{{\wf^\bullet}}
\newcommand{\wfequiv}{\underset{\wf}{\equiv}}
\newcommand{\simwf}{\underset{\wf}{\sim}}
\newcommand{\Zaug}{\overline{\bZ}}
\newcommand{\lf}[2]{#1^{[#2]}}         
\newcommand{\Zleaves}{\underline{\bZ}}
\newcommand{\injwords}[1]{\mathrm{Inj}(#1)}
\newcommand{\flag}{\operatorname{Fl}} 
\newcommand{\quasicoin}{\operatorname{SC}} 
\newcommand{\perm}{\operatorname{Perm}}
\newcommand{\red}{\operatorname{Red}}
\newcommand{\nvects}{\mathsf{Codes}} 
\newcommand{\nvectsp}{\nvects_+} 
\newcommand{\nvectsn}[1]{\nvects_{#1}} 
\newcommand{\artin}[1]{\mathsf{Artin}_{#1}} 
\newcommand{\abb}[1]{\mathsf{ABB}_{#1}} 
\newcommand{\abbtop}[1]{\abb{#1}^{\mathrm{top}}}
\newcommand{\lsupp}{\mathrm{LSupp}}
\newcommand{\flatten}{\mathrm{flat}}
\newcommand{\internal}{\operatorname{IN}}
\newcommand{\terminal}{\operatorname{TN}}
\newcommand{\backquasi}{\overleftarrow{QR}} 
\newcommand{\compatible}{\mathsf{Compatible}}
\newcommand{\lcode}{\mathsf{lcode}}
\newcommand{\decreasing}{\mathsf{Dec}} 
\newcommand{\compl}{\mathsf{LC}} 
\newcommand{\wt}{\mathrm{wt}} 
\newcommand{\trim}[1]{\mathrm{trim}_{#1}} 
\newcommand{\injective}{\mathrm{Inj}} 
\title{Forest polynomials and the class of the permutahedral variety 
}
\author{Philippe Nadeau}
\address{Univ Lyon, Universit\'e Claude Bernard Lyon 1, CNRS UMR
5208, Institut Camille Jordan, 43 blvd. du 11 novembre 1918, F-69622 Villeurbanne cedex, France}
\email{\href{mailto:nadeau@math.univ-lyon1.fr}{nadeau@math.univ-lyon1.fr}}
\author{Vasu Tewari}
\address{Department of Mathematics, University of Hawaii at Manoa, Honolulu, HI 96822, USA}
\email{\href{mailto:vvtewari@math.hawaii.edu}{vvtewari@math.hawaii.edu}}
\thanks{P.~N is partially supported by the project ANR19-CE48-011-01 (COMBIN\'E). V.~T. acknowledges the support from Simons Collaboration Grant \#855592.}
\begin{document}

\begin{abstract}
We study a basis of the polynomial ring that we call forest polynomials.
This family of polynomials is indexed by a combinatorial structure called indexed forests and permits several definitions, one of which involves flagged P-partitions.
 As such, these polynomials have a positive expansion in the basis of slide polynomials. 
 By a novel insertion procedure that may be viewed as a generalization of the Sylvester correspondence we establish that Schubert polynomials decompose positively in terms of forest polynomials. 
 Our insertion procedure involves a correspondence on words which allows us to show that forest polynomials multiply positively.

We proceed to show that forest polynomials are a particularly convenient basis in regards to studying the quotient of the polynomial ring modulo the ideal of positive degree quasisymmetric polynomials.
 This aspect allows us to give a manifestly nonnegative integral description for the Schubert class expansion of the cohomology class of the permutahedral variety in terms of a parking procedure. 
 We study the associated combinatorics in depth and introduce a multivariate extension of mixed Eulerian numbers.
\end{abstract}

\maketitle
\tableofcontents

\section{Introduction}
\label{sec:intro}

\subsection{Background}
\label{subsec:bg}

The coinvariant algebra $R_n$ in type $A$, which is the quotient of the polynomial ring $\bQ[\alpxn{n}]\coloneqq \bQ[x_1,\dots,x_n]$ modulo the ideal $\Sym_n^+$ of positive degree symmetric polynomials, has a  storied past and a vibrant present. By work of Borel \cite{Bor53}, $R_n$ is isomorphic to the cohomology ring $H^*(\flag_n,\bQ)$ of the complete flag variety $\flag_{n}$. As a $\bQ$-vector space, $R_n$ is $n!$-dimensional and a distinguished basis of monomials was provided by Artin \cite{EArt44}.
As an $S_n$-representation, $R_n$ carries a graded regular representation as was shown by Chevalley \cite{Che55}.
In recent years, several interesting generalizations of $R_n$ have been studied to great effect, in particular in the theory of Macdonald polynomials \cite{GLW21,HRS18,Rho22}.

A basis for $H^*(\flag_n,\bQ)$ is given by Schubert classes $\{[X_w]\}_{w\in S_n}$ and, following on work of Bernstein--Gelfand--Gelfand \cite{BGG73}, a distinguished family $\{\schub{w}\}_{w\in S_n}$ of polynomial representatives  for these classes  was introduced by Lascoux--Sch\"utzenberger \cite{Las82}.
These representatives, known as Schubert polynomials, have proven to be influential in combinatorial algebraic geometry; see \cite{Ber93,BS98,Bil93,FK96,FS94,HPW22,KM05,Lam18} to get a measure of their importance, with the caveat that the preceding list is by no means exhaustive.
Schubert polynomials are also one of the main protagonists of this article, and we proceed to describe another.

A ring that contains symmetric functions, and whose combinatorics simultaneously draws upon and in return enriches the latter, is the ring of quasisymmetric functions introduced by Gessel \cite{Ges84}.
Truncating a quasisymmetric function, which is a bounded degree formal power series in $\bQ[\![x_1,x_2,\dots]\!]$, to finitely many variables $x_1$ through $x_n$ produces a quasisymmetric polynomial.
Let $\qsym_n^+$ denote the ideal in $\bQ[\alpxn{n}]$ generated by positive degree quasisymmetric polynomials in $x_1$ through $x_n$.
Motivated by the rich vein of ideas in the study of $R_n$, Aval--Bergeron--Bergeron \cite{ABB04} considered its `quasisymmetric analogue', i.e. the quotient $\quasicoin_n\coloneqq \bQ[\alpxn{n}]/\qsym_n^+$.
They proved the foundational result that as a $\bQ$-vector space, $\quasicoin_n$ has dimension given by the $n$th Catalan number $\mathrm{Cat}_{n}\coloneqq \frac{1}{n+1}\binom{2n}{n}$.
In particular, they produced a standard basis of monomials-- henceforth referred to as \emph{ABB monomials}-- indexed by Dyck paths for $\quasicoin_n$.
Nevertheless, numerous questions remain-- Is there a geometric interpretation akin to Borel's realization of $R_n$?
Is there a connection to the geometry of $\flag_n$?
Is there a basis of the polynomial ring that mirrors the pleasant aspects of Schubert polynomials and behaves `nicely' with regards to reduction modulo $\qsym_n^+$?

\smallskip

In this article we address some of these questions.
Our motivation is geometric and, at first glance, has nothing to do with quasisymmetric polynomials.
The permutahedral variety $\perm_n$ is a smooth projective toric variety of dimension $n-1$ determined by the normal fan of the usual permutahedron of dimension $n-1$.
As it is a subvariety of $\flag_n$, its cohomology class $[\perm_n]\in H^*(\flag_n,\bQ)$ expands in terms of Schubert classes $[X_w]$ as follows:
\begin{align}
  [\perm_n]=\sum_{w\in S_n}a_w[X_w],
\end{align}
where $S_n$ denotes the symmetric group on $n$ letters. In fact one can restrict the sum to range over permutations with length $n-1$.
The coefficients $a_w$ are naturally in $\bZ_{\geq 0}$.
We find ourselves faced with a quintessential combinatorial task: assign $a_w$ a combinatorial meaning.

In \cite{NT20}, the authors gave an explicit formula for the $a_w$ by relying on work of Klyachko describing a Macdonald-like formula for the image of a Schubert class in $H^*(\perm_n,\bQ)^{S_{n}}$.
The resulting expression involves a sum of \emph{normalized mixed Eulerian numbers}.
So while we are able to infer positivity of $a_w$ via an explicit formula, the expression itself involves summing rational numbers.
This aspect motivated a deeper study of mixed Eulerian numbers; see  \cite{NT21,NTremixed} where a $q$-analogue is introduced.
There has been other recent work utilizing mixed Eulerians in a crucial way; see \cite{Ber20, Hor21}. For a matroidal generalization, see \cite{KaKu23}.

The quotient $\quasicoin_n$ enters the setup as follows.
\emph{Divided symmetrization} (DS henceforth), introduced by Postnikov \cite[Section 3]{Pos09}, takes a homogeneous polynomial $f(x_1,\dots,x_n)\in \bQ[\alpxn{n}]$ of degree $n-1$ as input and outputs $\ds{f}{n}\in \bQ$
as:
\begin{align}
\label{eq:def_ds}
  \ds{f}{n}\coloneqq \sum_{w\in S_n}w\cdot\left(\frac{f(x_1,\dots,x_n)}{\prod_{1\leq i\leq n-1}(x_i-x_{i+1})}\right).
\end{align}
By work of Anderson--Tymoczko \cite{And10}, the $a_w$ permit the following description in terms of DS:
\begin{align}
  \label{eq:aw_via_ds}
a_w=\ds{\schub{w}}{n}.
\end{align}
In \cite{DS} the authors showed that $\ds{f}{n}$ may be computed by reduction modulo $\qsym_n^+$, thereby setting the stage for the current work. More precisely, given homogeneous $f\in \bQ[\alpxn{n}]$ of degree $n-1$, let $\overline{f}$ denote the representative in $\quasicoin_n$ expressed in the basis of ABB monomials.
Then \cite[Theorem 1.3]{DS} states that
\begin{align}
  \label{eq:ds_and_qsym}
  \ds{f}{n}=\overline{f}(1,\dots,1).
\end{align}
Thus Equations~\eqref{eq:ds_and_qsym} and ~\eqref{eq:aw_via_ds} together motivate the investigation the reduction of Schubert polynomials modulo $\qsym_n^+$.
As we shall show, $\schub{w}\mod \qsym_n^+$ expands positively in terms of ABB monomials, thereby allowing us to give a manifestly nonnegative integral description for $a_w$.

\subsection{Statement of results}
\label{subsec:results}

We begin by describing a combinatorial model for $a_w$, and thereafter proceeding to discuss the moving parts.

Recall that $i_1i_2\ldots i_k$ is a \emph{reduced word} for a permutation $w$ if $w=s_{i_1}s_{i_2}\ldots s_{i_k}$ where $s_i$ is the transposition $(i\leftrightarrow i+1)$, and $k$ is minimal, given by the length $\ell(w)$.
We denote the set of reduced words for $w$ by $\red(w)$.
With this data at hand, we describe a new parking procedure that underlies the combinatorics for $a_w$.

\smallskip

\noindent {\em Parking procedure $\wf$}:  Consider parking spots indexed by $\bZ$, initially all empty.
Cars $1,2,\dots$ arrive successively, with car $i$ preferring spot $v_i$, and want to park at (empty) spots.
Assume inductively that $i-1$ cars have already parked.
If spot $v_i$ is empty, then car $i$ parks there.
Otherwise, $v_i$ belongs to an interval $[a,b]\coloneqq \{a,a+1,\dots,b-1,b\}$ of occupied spots with spots $a-1$ and $b+1$ being free.
Define $v_j$ to be the preferred spot of the car that parked last in  $[a,b]$; that is, $j<i$ is maximal such that $v_j\in [a,b]$. Then, car $i$ parks in $b+1$ if $v_i\geq v_j$, while  it parks in $a-1$ if $v_i<v_j$.

After $k$ cars have parked they occupy a $k$-subset $\wf(v_1\cdots v_k)\subset \bZ$. A preference word $v_1\cdots v_k$ is called a \emph{$\wf$-parking word} if $\wf(v_1\cdots v_k)=\{1,\ldots,k\}$.

\begin{theorem}
\label{th:aw_as_parking}
If $w\in S_n$ has length $n-1$, then $a_w$ is the number of reduced words of $w^{-1}$ that are also $\wf$-parking words.
\end{theorem}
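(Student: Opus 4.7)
The plan is to combine three ingredients: (i) the reformulation $a_w=\overline{\schub{w}}(1,\dots,1)$ obtained by chaining \eqref{eq:aw_via_ds} and \eqref{eq:ds_and_qsym}, where $\overline{\schub{w}}$ denotes the class of $\schub{w}$ in $\quasicoin_n$ written in the ABB monomial basis; (ii) the positive expansion $\schub{w}=\sum_F c_{w,F}\,\forestpoly_F$ in the forest polynomial basis, delivered by the insertion correspondence advertised in the abstract; and (iii) the fact, also advertised in the abstract, that forest polynomials behave transparently modulo $\qsym_n^+$.

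Concretely, the Sylvester-like insertion interprets the integer $c_{w,F}$ as the number of reduced words $\rho\in\red(w^{-1})$ whose image under insertion is the indexed forest $F$. The bridge to the parking procedure is that the insertion of a word $\rho=v_1v_2\cdots v_{n-1}$ is nothing other than the parking process $\wf$: the set $\wf(\rho)$ of occupied spots records the underlying support of $F$, while the dynamical data (who parked where, and in what order, governed by the comparison $v_i\gtreqless v_j$) supplies the internal tree structure and labels of $F$.

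For step (iii), the structural claim to extract from the body of the paper is that the forest polynomials $\forestpoly_F$ with support contained in $\{1,\dots,n-1\}$ descend to coset representatives of $\quasicoin_n$ compatible (up to relabelling) with the ABB monomial basis, whereas every other $\forestpoly_F$ lies in $\qsym_n^+$. Granted this, reduction modulo $\qsym_n^+$ kills every term of the forest expansion of $\schub{w}$ except those indexed by forests supported in $\{1,\dots,n-1\}$, and each surviving term maps to a single ABB monomial, which evaluates to $1$ at $(1,\dots,1)$. Therefore
\[
a_w=\sum_{\substack{F\\ \text{supp}(F)=\{1,\dots,n-1\}}} c_{w,F},
\]
which, by the insertion/parking interpretation of the previous paragraph, equals the number of reduced words $\rho\in\red(w^{-1})$ satisfying $\wf(\rho)=\{1,\dots,n-1\}$, namely the $\wf$-parking words among $\red(w^{-1})$.

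The principal obstacle will be harvesting step (iii): establishing that the forest basis is \emph{so} well-adapted to $\qsym_n^+$ that its `supported-in-$\{1,\dots,n-1\}$' subfamily descends to a basis of $\quasicoin_n$ matching the ABB basis while its complement lies entirely in the ideal. Once that structural theorem about forest polynomials is available, together with the Schubert-to-forest positivity from the insertion, the parking statement reduces to the bookkeeping outlined above.
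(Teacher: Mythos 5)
Your overall strategy is the one the paper uses: chain $a_w=\ds{\schub{w}}{n}=\overline{\schub{w}}(1,\dots,1)$ via \eqref{eq:aw_via_ds} and \eqref{eq:ds_and_qsym}, expand $\schub{w}$ in forest polynomials, and exploit the transparent behavior of $\forestpoly_F$ modulo $\qsym_n^+$ (Corollary~\ref{cor:forest_and_qsym}). That matches the paper's route through Theorem~\ref{th:slide_to_parking}. However, your middle step contains two compensating errors that you neither notice nor justify, so the argument as written is not sound. First, in the expansion $\schub{w}=\sum_F c_{w,F}\forestpoly_F$ coming from Theorem~\ref{th:main_3}, the coefficient $c_{w,F}$ is \emph{not} the number of reduced words of $w^{-1}$ whose insertion shape is $F$; it is the number of $\wfequiv$-equivalence classes of $\red(w^{-1})$ (equivalently, the number of distinct $\psymb$-symbols) with underlying forest $F$. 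A single $\psymb$-symbol $P$ accounts for $|\mathcal{C}_P|=|\decreasing(F)|$ reduced words, so your count overshoots by that factor. (Example: for $w=14253$, both reduced words $423,243$ of $w^{-1}$ insert to the same $\psymb$-symbol, so the true coefficient of $\forestpoly_{T_0}$ is $1$, not $2$.) Second, you assert that each surviving $\forestpoly_F$ with $\supp(F)=[n-1]$ reduces to a \emph{single} ABB monomial and hence evaluates to $1$ at $(1,\dots,1)$. This is false: by Theorem~\ref{th:main_4} such a $\forestpoly_F$ is unchanged modulo $\qsym_n^+$ and is a nonnegative combination of several ABB monomials; by Proposition~\ref{prop:all_1s_tree} its evaluation at $(1,\dots,1)$ is $|\decreasing(F)|$.

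These two slips happen to cancel—your inflated $c_{w,F}$ is exactly the correct coefficient times $|\decreasing(F)|$, which is exactly what you dropped from $\forestpoly_F(1,\dots,1)$—so your final formula $a_w=\sum_{\supp(F)=[n-1]}c_{w,F}$ (with your reading of $c_{w,F}$) does give the right number. But you need to make the cancellation explicit. The correct bookkeeping is: $a_w=\sum_{P:\,\supp(P)=[n-1]}\forestpoly(P)(1,\dots,1)=\sum_{P:\,\supp(P)=[n-1]}|\decreasing(F_P)|=\sum_{P:\,\supp(P)=[n-1]}|\mathcal{C}_P|$, which is the number of reduced words whose $\psymb$-forest has support $[n-1]$, i.e.\ the $\wf$-parking reduced words. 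You should also note the small point the paper makes in Theorem~\ref{th:slide_to_parking}: because all words in $\red(w^{-1})$ have length $n-1$, the condition $\supp(F)\subseteq[n-1]$ already forces $\supp(F)=[n-1]$. And you need to record that $\red(w^{-1})$ is closed under $\wfequiv$ (which the paper verifies via commutation moves and Proposition~\ref{prop:rootlist_separation}) before Theorem~\ref{th:main_3} or Proposition~\ref{prop:from_slides_to_forests} can be invoked.
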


\begin{example}
\label{ex:running}
Consider $w=21543\in S_5$ with $\ell(w)=4$. To compute $a_w$, we need to compute $\wf$-parking words in $\red(w^{-1})=\red(w)$. Here are all reduced words for $21543$.
\begin{align*}
\red(w)=\{1343,3143,3413,3431,1434,4134,4314,4341\}.
\end{align*}
 The first four  are $\wf$-parking words, and therefore $a_{21543}=4$.
\end{example}

While the statement of Theorem~\ref{th:aw_as_parking} is succinct, we need some new theory to arrive at it. 
We introduce some notions that are necessary and refer the reader to Section~\ref{sec:bg} for more.

Let $\nvectsp$ denote the set of sequences $\bfc=(c_1,c_2,\dots)$ of nonnegative integers where only finitely many entries are nonzero.
Our main combinatorial gadget in this article is a family of polynomials $\{\forestpoly_{\bfc}\}_{\bfc\in \nvectsp}$ that we call \emph{forest polynomials}.
They were introduced by the authors in \cite{NT23} using a flagged analogue of $P$-partitions.
It transpires that they possess interesting properties which make them worthy of investigation independently.
The next result captures some aspects of forest polynomials, with an emphasis on how they interact with known bases of the polynomial ring $\bQ[x_1,x_2,\dots]$ such as Schubert polynomials and slide polynomials. For convenience, given two bases $\mathcal{A}$ and $\mathcal{B}$ of $\bQ[x_1,x_2,\dots]$, we say that $\mathcal{A}$ expands \emph{positively} in $\mathcal{B}$ if every element in $\mathcal{A}$ expands in terms of elements in $\mathcal{B}$ with nonnegative coefficients.
\begin{theorem}
\label{th:main_2}
The set of forest polynomials forms a basis for the polynomial ring $\bQ[x_1,x_2,\dots]$. Furthermore, they expand positively in slide polynomials.
\end{theorem}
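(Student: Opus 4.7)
The plan is to derive both claims from the definition of $\forestpoly_\bfc$ as the generating function of flagged $P$-partitions on the indexed forest attached to $\bfc$, a definition recalled in Section~\ref{sec:bg}. The classical Stanley decomposition of a $P$-partition generating function into contributions from linear extensions, adapted to the flagged setting in the style of Assaf--Searles for slide polynomials, should give
\begin{equation*}
\forestpoly_\bfc \;=\; \sum_{L} \slide_{\al(L)},
\end{equation*}
where $L$ ranges over the linear extensions of the underlying forest poset and $\al(L)\in\nvectsp$ is the weak composition recording the descents of $L$ together with the flag data. This delivers the positive slide expansion directly; the only real bookkeeping is to verify that the flag conditions on flagged $P$-partitions translate precisely into the support conditions defining $\slide_\al$ (i.e., the restriction on how a monomial may \emph{slide} past zero entries).

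To upgrade positivity to the basis statement, I would prove a unitriangularity result between $\{\forestpoly_\bfc\}_{\bfc\in\nvectsp}$ and $\{\slide_\bfc\}_{\bfc\in\nvectsp}$. Concretely, I would single out a canonical linear extension $L_\bfc$ of the indexed forest attached to $\bfc$ (the natural left-to-right depth-first reading of the nodes is the likely candidate) and show that (i) $\al(L_\bfc) = \bfc$, and (ii) for every other linear extension $L$, the composition $\al(L)$ is strictly larger than $\bfc$ in a suitable partial order on $\nvectsp$, for instance lexicographic order on the sequence of nonzero positions, or the order generated by a single slide move. Because $\{\slide_\al\}_{\al\in\nvectsp}$ is already known to be a basis of $\bQ[x_1,x_2,\ldots]$, a unitriangular change-of-basis matrix with $1$'s on the diagonal immediately forces $\{\forestpoly_\bfc\}_{\bfc\in\nvectsp}$ to be a basis too.

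The main obstacle will be pinning down the correct partial order so that the triangularity is clean. The order has to be compatible simultaneously with the combinatorics of indexed forests, so that $L_\bfc$ is identifiable as the unique extremal extension, and with the descent/flag recording map $L \mapsto \al(L)$, so that moving away from $L_\bfc$ by swapping an incomparable pair strictly increases $\al$. Assuming the indexed-forest structure is as rigid as its role elsewhere in the paper suggests, this step should reduce to a local inductive check: swapping two adjacent entries in a linear extension either creates a new descent at a position that shifts $\al$ upward in the chosen order, or, if it does not, is forbidden by the forest's cover relations. Once this is established, both parts of Theorem~\ref{th:main_2} follow without further calculation.
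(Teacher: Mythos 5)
Your plan for the slide expansion is the same as the paper's: Proposition~\ref{prop:forests_into_slides} is precisely the flagged $P$-partition decomposition over linear extensions of the forest poset, imported from \cite{NT23}, so that half matches.

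For the basis claim you take a genuinely different (and heavier) route. The paper's proof of Proposition~\ref{prop:forests_as_basis} is a one-liner: from Definition~\ref{def:forest} the lexicographically extremal labeling is $\kappa=\rho_F$, whose monomial is exactly $\alpx^{\bfc(F)}$, so $\forestpoly_F$ has revlex leading monomial $\alpx^{\bfc(F)}$; this gives unitriangularity directly against the \emph{monomial} basis and avoids slides entirely. Your proposal instead triangularizes against the slide basis, which works but forces you to handle two extra subtleties that the monomial argument sidesteps: (i) distinct words $\mathbf{i}_\ell$ may yield equal slide polynomials, or $\slide(\mathbf{i}_\ell)=0$, so ``the'' slide expansion is only defined after collecting terms; and (ii) the canonical extension you want is \emph{not} the left-to-right depth-first reading — inorder is not even a linear extension of the forest poset, since a root is visited before its right subtree. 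The correct choice is the decreasing labeling that visits internal nodes in weakly decreasing order of their $\rho_F$-values (ties broken bottom-to-top along left chains), which is exactly what produces the nonincreasing word $W_{\bfc(F)}$ and hence $\slide_{\bfc(F)}$. Once that is fixed, every other extension creates an ascent in the word, and the resulting slide (if nonzero) has revlex-smaller leading monomial, giving your triangularity. So the strategy is sound, but you should either correct the choice of canonical extension or, more economically, replace the whole second step by the paper's direct observation that $\alpx^{\bfc(F)}$ is the revlex leading term of $\forestpoly_F$, which makes the slide-basis detour unnecessary.
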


One upshot of Theorem~\ref{th:main_2} is the potential parallel with Schubert polynomials that it hints at.
Indeed Schubert polynomials also give a basis for $\bQ[x_1,x_2,\dots]$ and, in addition, expand positively in slide polynomials \cite{AssSea17,Bil93}.
In fact, as we demonstrate, the parallel is deeper, and so is the connection-- Schubert polynomials expand positively in forest polynomials.

Toward establishing this, in view of Theorem~\ref{th:main_2} one could hope coarsen the slide expansion of Schubert polynomials so that the summands give forest polynomials.
Since this expansion is obtained by writing down a slide polynomial\footnote{Technically, a slide polynomial or $0$. Working in the back stable setting fixes this anomaly.} for each element of $\red(w)$, we are looking to group elements of $\red(w)$ according to a hitherto unknown rule.
We are thus led to the correspondence $\wfcorr$, of which the $\wf$-parking procedure is but a shadow.
Briefly put, a simplistic version of this correspondence associates with any word $w$ in the alphabet $\bZ_{\geq 1}$ an ordered pair of labeled forests $(\psymb(w),\qsymb(w))$ by an insertion procedure.
Sticking with tradition, we refer to $\psymb(w)$ (respectively $\qsymb(w)$) as the \emph{P-symbol} (respectively \emph{Q-symbol}).
The insertion procedure is such that $\psymb(w)$ is a local binary search forest whereas $\qsymb(w)$ is a decreasing forest; see Section~\ref{sec:wf} for  definitions.

The $\wfcorr$-correspondence allows us to define the equivalence $\wfequiv$ on words; two words are equivalent if their P-symbols coincide.
It transpires that for any permutation $w$, the set $\red(w)$ is closed under $\wfequiv$, and therefore decomposes into $\wfequiv$-equivalence classes.
To each such class $\mathcal{C}$  is attached a polynomial $\forestpoly(\mathcal{C})$ that coincides with a forest polynomial. 
We then have the following as one of our main results.

\begin{theorem}\label{th:main_3}
  Given a permutation $w$, let $\mathcal{G}_w$ denote the set of $\wfequiv$-equivalence classes of $\red(w^{-1})$.
 The Schubert polynomial $\schub{w}$ expands positively in forest polynomials as follows:
  \begin{align*}
    \schub{w}=\sum_{\mathcal{C}\in\mathcal{G}_w}\forestpoly(\mathcal{C}).
  \end{align*}
\end{theorem}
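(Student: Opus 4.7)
The natural strategy is to compare slide expansions on both sides. First, we invoke the Assaf--Searles slide expansion of Schubert polynomials, which may be written in the form
\begin{align*}
\schub{w} = \sum_{a \in \red(w^{-1})} \slide_a,
\end{align*}
where $\slide_a$ is the slide polynomial attached to the reduced word $a$ via its descent composition (terms whose composition is not quasi-Yamanouchi contribute zero). On the forest side, Theorem~\ref{th:main_2}, which is established through the flagged $P$-partition definition of $\forestpoly_\bfc$, furnishes an expansion
\begin{align*}
\forestpoly_{\bfc} = \sum_{a \in \cL_{\bfc}} \slide_a,
\end{align*}
where $\cL_{\bfc}$ is a set of words naturally associated with the indexed forest encoded by $\bfc$ (roughly, its reading or linear-extension words).

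The structural heart of the proof is the following pair of properties of the insertion $\wfcorr$: (i) for any $w$, the set $\red(w^{-1})$ is a union of $\wfequiv$-equivalence classes; and (ii) for any such class $\mathcal{C}$ with common P-symbol $F = \psymb(\mathcal{C})$ one has $\sum_{a \in \mathcal{C}} \slide_a = \forestpoly_{F}$, which is $\forestpoly(\mathcal{C})$ by definition. Granted (i) and (ii), the theorem follows immediately from the chain
\begin{align*}
\schub{w} = \sum_{a \in \red(w^{-1})} \slide_a = \sum_{\mathcal{C} \in \mathcal{G}_w} \sum_{a \in \mathcal{C}} \slide_a = \sum_{\mathcal{C} \in \mathcal{G}_w} \forestpoly(\mathcal{C}).
\end{align*}

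For (i), the plan is to identify the elementary exchange moves generating $\wfequiv$ and to verify that each of them, when applied inside a reduced word, amounts to a commutation $s_i s_j \leftrightarrow s_j s_i$ (for $|i-j| \geq 2$) or otherwise preserves the product $s_{a_1}\cdots s_{a_p}$; the point is that the $\wfcorr$-insertion is engineered so that its local moves are of this controlled form, in analogy with (but distinct from) the Sylvester and hypoplactic settings. For (ii), one shows that the set of words with $\psymb(a) = F$ coincides with the reading-word set $\cL_{\bfc(F)}$ appearing in the flagged $P$-partition expansion of $\forestpoly_F$; the inclusion $\{a : \psymb(a)=F\} \subseteq \cL_{\bfc(F)}$ follows by reading off the insertion history, and the reverse inclusion by the determinism of the procedure.

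The principal obstacle is (i): unlike in the classical Sylvester correspondence, where closure of $\red(w^{-1})$ under the relevant plactic-type equivalence is folklore, the $\wfcorr$-insertion is new and this closure must be proved from scratch by a detailed case analysis of the insertion rules, most likely by tracking how an elementary move alters the successive entries of a word and showing that the alteration is always braid-trivial or a far commutation. Property (ii), while not formal, should flow more cleanly once the characterization of $\wfcorr$-P-symbols is established, directly from the flagged $P$-partition definition of forest polynomials.
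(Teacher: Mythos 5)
Your proposal follows essentially the same route as the paper: decompose the Billey--Jockusch--Stanley slide expansion along $\wfequiv$-classes, establish that $\red(w^{-1})$ is a union of such classes, and identify each class sum of slides with a forest polynomial via the flagged $P$-partition/linear-extension description. The closure step (your property (i)) is handled in the paper exactly as you anticipate, via Proposition~\ref{prop:rootlist_separation}, which shows that any $\simwf$-move swaps two adjacent letters $a,b$ separated by at least two elements of the rootlist and hence satisfying $|\mathrm{val}(a)-\mathrm{val}(b)|\geq 2$, so the move is a far commutation and $\red(w^{-1})$ is automatically stable.
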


Theorem~\ref{th:main_3} paves the way for understanding reduction of Schubert polynomials modulo $\qsym_n^+$ by translating the question to reducing forest polynomials.
Fortunately, the latter task turns out to be  easy.
To emphasize the parallel to Schubert polynomials, we recall another property thereof.

Fix a positive integer $n$. Let $\nvectsn{n}$ denote the subset of $\nvectsp$ comprising $\bfc=(c_1,c_2,\dots)$ such that $c_i=0$ for all $i>n$.
Indexing Schubert polynomials by (Lehmer) codes of permutations in $S_{\infty}$, we have that the set $\{\schub{\bfc}\suchthat \bfc \in \nvectsn{n}\}$ is a basis of $\bQ[\alpxn{n}]$.
In fact, this basis is nice in regards to reduction modulo $\Sym_n^+$. Letting $\alpx^{\bfc}\coloneqq \prod_{i}x_i^{c_i}$ we have the following decomposition \cite{EArt44}:
\begin{align}
\label{eq:artin}
\bQ[\alpxn{n}]=\bQ\{\alpx^{\bfc}\suchthat \bfc\in \nvectsn{n} \text{ satisfying } c_i\leq n-i \text{ for } i=1,\dots,n\}\oplus \Sym_n^+.
\end{align}
The monomials in the first summand in~\eqref{eq:artin} are called \emph{Artin} monomials.
Let $\artin{n}$ denote the set of $\bN$-vectors that determine Artin monomials by way of their exponent vectors.
In view of~\eqref{eq:artin}, one may ask for representatives in $\bQ[\alpxn{n}]/\Sym_n^+$ in terms of Artin monomials.
We then have the following well-known result describing how Schubert polynomials $\schub{\bfc}$ for $\bfc\in \nvectsn{n}$ perform.
\begin{align}
  \label{eq:schubert_property}
  \schub{\bfc}\mod \Sym_n^+=\left\lbrace\begin{array}{ll}
  \schub{\bfc}  & \bfc\in \artin{n}\\
  0 & \bfc\notin \artin{n}. \end{array}\right.
\end{align}

With $\Sym_n^+$ and Artin monomials replaced by $\qsym_n^+$ and ABB monomials, forest polynomials satisfy a statement in the spirit of~\eqref{eq:schubert_property}.
The work of Aval--Bergeron--Bergeron \cite{ABB04} implies the following decomposition
\begin{align}
  \label{eq:abb}
\bQ[\alpxn{n}]=\bQ\{\alpx^{\bfc}\suchthat \bfc\in \nvectsn{n} \text{ satisfying } \sum_{1\leq i\leq j}c_{n+1-i}\leq j-1  \text{ for } j=1,\dots,n\}\oplus \qsym_n^+.
\end{align}
The monomials in the first summand in~\eqref{eq:abb} are the ABB monomials from before.
Let $\abb{n}$ denote the set of $\bN$-vectors that determine ABB monomials.
One can thus ask for representatives in $\bQ[\alpxn{n}]/\qsym_n^+$ in terms of their expansion in the basis of ABB monomials.
The following theorem  serves to emphasize the parallel between Schubert polynomials and forest polynomials.
\begin{theorem}
\label{th:main_4}
For $\bfc \in \nvectsn{n}$ the following holds:
\begin{align*}
  \forestpoly_{\bfc}\mod \qsym_n^+=\left\lbrace\begin{array}{ll}
  \forestpoly_{\bfc}  & \bfc\in \abb{n}\\
  0 & \bfc\notin \abb{n}. \end{array}\right.
\end{align*}
\end{theorem}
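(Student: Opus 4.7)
The plan is to treat the two cases of the conclusion separately. For $\bfc\in\abb{n}$ the goal is to show that every monomial appearing in $\forestpoly_\bfc$ already has its exponent vector in $\abb{n}$, so that $\forestpoly_\bfc$ lies in the ABB span and is its own canonical representative. For $\bfc\in\nvectsn{n}\setminus\abb{n}$ the goal is to show $\forestpoly_\bfc\in\qsym_n^+$ directly. The natural tools for both parts are the flagged $P$-partition definition of forest polynomials from \cite{NT23} and their slide expansion from Theorem~\ref{th:main_2}, combined with the reverse-sum description of $\abb{n}$ given by $\sum_{i\le j}c_{n+1-i}\le j-1$.

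For the affirmative case, I would use the flagged reverse $P$-partition definition to write $\forestpoly_\bfc=\sum_\sigma\alpx^{\wt(\sigma)}$, summed over admissible labelings $\sigma$ of the indexed forest associated to $\bfc$. The task is to verify that the flags on the forest impose exactly the right upper bounds on labels so that $\wt(\sigma)$ satisfies the reverse-sum ABB inequalities whenever $\bfc$ does. In particular, no monomial in $\forestpoly_\bfc$ can involve $x_m$ with $m>n$, and the partial sums from the right are controlled exactly as in the ABB condition; this should reduce to a routine combinatorial check once the flag structure of the indexed forest attached to an ABB code is unpacked.

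For the vanishing case, I would use the slide expansion $\forestpoly_\bfc=\sum_\alpha s_{\bfc,\alpha}\slide_\alpha$ provided by Theorem~\ref{th:main_2}. Since the fundamental quasisymmetric polynomial $F_\alpha(x_1,\dots,x_n)$ is a specific positive sum of slide polynomials, certain groupings of slide terms automatically lie in $\qsym_n^+$. The strategy is to partition the slide polynomials appearing in the expansion according to a natural equivalence on their indexing compositions so that each class sums either to a fundamental quasisymmetric polynomial in $x_1,\dots,x_n$ or, under the ABB hypothesis, to a single ABB monomial. When $\bfc\notin\abb{n}$, the failure of the reverse-sum inequality should force every nonempty class to land in the quasisymmetric bucket.

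The main obstacle is precisely this grouping step: individual slide polynomials do not in general belong to $\qsym_n^+$, so the cancellations have to be identified with care. A cleaner recursive alternative I would pursue in parallel is induction on the indexed forest of $\bfc$, pruning its rightmost leaf. Either the resulting code still lies in $\abb{n}$ and induction closes, or the pruning exposes a factor that is a positive-degree quasisymmetric polynomial in $x_1,\dots,x_n$, yielding an expression $\forestpoly_\bfc=Q\cdot g$ with $Q\in\qsym_n^+$. Pinpointing this factorization, or equivalently identifying the quasisymmetric relations that collapse the non-ABB slide contributions, is where the substantive combinatorial work lies.
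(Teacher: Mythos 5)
Your affirmative case is sound and roughly parallels what the paper gets from Proposition~\ref{prop:forests_as_basis}: since every labeling $\kappa$ contributing to $\forestpoly_\bfc$ satisfies $\kappa(v)\le\rho_F(v)$ pointwise, the exponent vector $\bfd$ of each monomial satisfies $\sum_{i\ge m}d_i=|\{v:\kappa(v)\ge m\}|\le|\{v:\rho_F(v)\ge m\}|=\sum_{i\ge m}c_i\le n-m$, so $\bfd\in\abb{n}$. That is exactly the ``routine check'' you gesture at, and it works.

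The vanishing case is where you have a genuine gap, and you acknowledge as much. Neither of your two suggestions supplies the missing idea. The slide-grouping plan is problematic because individual slide polynomials are not in $\qsym_n^+$ and there is no canonical equivalence on compositions under which the classes sum to fundamentals; that kind of regrouping is what the paper uses in the \emph{opposite} direction (slides into forests, Proposition~\ref{prop:from_slides_to_forests}), not to exhibit membership in $\qsym_n^+$. Your pruning alternative is also misdirected: you speak of the pruned code ``still lying in $\abb{n}$,'' but in the vanishing case $\bfc\notin\abb{n}$ to begin with, and the recurrence of Lemma~\ref{lem:leaf_indexed} does not isolate a quasisymmetric factor. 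What the paper actually does is prove the inclusion--exclusion identity of Theorem~\ref{th:forest_into_dual_forest}: $\forestpoly_F=\sum_L(-1)^{|L|}\sum_{\text{$L$-compatible }\kappa}\alpx_\kappa$, the sum over lower order ideals $L$ of the forest poset, established by a sign-reversing involution on exchangeable nodes (Appendix~\ref{app:sign_reversing}). Every term with $L\neq\internal(F)$ carries the ordinary $P$-partition generating polynomial of the complementary upper set $U$ as an explicit factor, hence lies in $\qsym_n^+$; the surviving $L=\internal(F)$ term is $(-1)^{|F|}\widetilde{\forestpoly}_F$, which is identically zero precisely when $\supp(F)\nsubseteq[n-1]$, i.e.\ when $\bfc\notin\abb{n}$ (Corollary~\ref{cor:forest_and_qsym}). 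This single identity handles both halves of the theorem at once, and it is the ingredient your proposal does not find.
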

Theorems~\ref{th:main_3} and \ref{th:main_4} together imply our combinatorial rule for $a_w$ in Theorem~\ref{th:aw_as_parking}. The proof of Theorem~\ref{th:main_4} occupies Appendix~\ref{app:sign_reversing}.

\medskip

\noindent\textbf{Outline of article}:
Section~\ref{sec:bg} introduces the necessary background. Section~\ref{sec:forest_polynomials} introduces our chief combinatorial objects: indexed forest and forest polynomials. It describes in particular the slide expansion, a recurrence relation, as well as the principal specialization.
Section~\ref{sec:forest_and_qsym} sets the stage for Theorem~\ref{th:main_4} by introducing `dual' forest polynomials.
Section~\ref{sec:wf} introduces the correspondence $\wfcorr$ (as well the $\Omega$-parking procedure) which forms the crux of our work. We then prove several elementary yet helpful properties of the resulting equivalence relation $\wfequiv$. This section culminates with our central result which relates DS and $\Omega$-parking: Theorem~\ref{th:slide_to_parking}.
Section~\ref{sec:applications} provides some applications of the machinery developed leading up to it.
As a special case of our general result, we derive our interpretation for $a_w$ in Section~\ref{subsec:combinatorial_interpretation}. 
In Section~\ref{subsec:mult} we describe a `shuffle rule' to \emph{nonnegatively} describe the product of two  forest polynomials in the basis of forest polynomials. 
Section~\ref{subsec:mme} leverages the connection between DS and $\qsym_n^+$ to propose \emph{multivariate} mixed Eulerian numbers that readily recover both ordinary and remixed versions.
We conclude in Section~\ref{sec:the_rest} with some remarks suggesting avenues of exploration and hinting at upcoming work.

\section{Background}
\label{sec:bg}
We set up essential notation. To avoid burdening the reader, some notions will be introduced when they are first needed. Given integers $a<b$, we let $[a,b]$ denote the interval $\{a,a+1,\dots,b-1,b\}$ in $\bZ$.
If $a=1$, we simply write $[b]$.
In particular $[0]$ is the empty set.

\subsection{$\bN$-vectors}
We define an \emph{$\bN$-vector} $\bfc$ to be a sequence of nonnegative integers $(c_i)_{i\in \bZ}$ where all but finitely many $c_i$ are $0$. The \emph{support} $\supp(\bfc)$ of  $\bfc$ is the set of indices $i\in \bZ$ such that $c_i>0$.
The \emph{weight} of $\bfc$ is $|\bfc|\coloneqq \sum_i c_i$.
We denote the set of $\bN$-vectors by $\nvects$.

Recall from the introduction that $\nvectsp$ denotes the set of positively-supported $\bN$-vectors, i.e. $\bfc$ such that $\supp(\bfc)\subset \bZ_{\geq 1}$.
These are also the $\bN$-vectors that will concern us for the most part.
Given a positive integer $n$, recall further that $\nvectsn{n}$ is the set of $\bN$-vectors $\bfc$ satisfying $\supp(\bfc)\subseteq[n]$. We will write elements in $\nvectsn{n}$ as finite sequences $(c_1,\dots,c_n)$, and refer to them as \emph{weak compositions}.

Certain weak compositions play an important role for us. 
The set $\abb{n}\subset \nvectsn{n}$ comprises weak compositions $(c_1,\dots,c_n)$  such that
\[
\sum_{1\leq i\leq j}c_{n+1-i}\leq j-1 \text{ for all } j=1,\dots,n.
\]
For instance we have
\begin{align*}
  \abb{1}=\{(0)\},\,
  \abb{2}=\{(1,0),(0,0)\},\,
  \abb{3}=\{(2,0,0),(1,1,0),(1,0,0),(0,1,0),(0,0,0)\}.
\end{align*}
Note that $|\abb{n}|=\cat_n$.
Let $\abbtop{n}$ denote the set of ABB compositions in $\abb{n}$ of maximal weight, i.e. those of weight $n-1$.
Note that $|\abbtop{n}|=\cat_{n-1}$.

Omitting the zeros from $\bfc\in\nvects$  results in a \emph{strong composition} $\flatten(\bfc)$ of the same weight.
Henceforth we simply say composition when talking about strong compositions.
The entries in a composition $\alpha$ are called its \emph{parts}, and the number of parts is called its \emph{length}, which is denoted by $\ell(\alpha)$.
The unique composition of length and weight both equal to $0$ will be denoted by $\varnothing$.

\subsection{Permutations and Lehmer codes}
Let $S_{\bZ}$ denote permutations of $\bZ$ that leave all but finitely many $i\in \bZ$  fixed.
Given $w\in S_\bZ$, define its \emph{Lehmer code} $\lcode(w)\coloneqq(c_i)_{i\in \bZ}$ where $c_i=|\{j>i\suchthat w(i)>w(j)\}|$.
Note that $w\mapsto \lcode(w)$ sets up a bijection between $S_\bZ$ and $\nvects$.

Given $w\in S_{\bZ}$, let $\supp(w)\coloneqq \{i\in \bZ\suchthat w(i)\neq i\}$.
We then define $S_{+}$ to be the set of permutations $w$ satisfying $\supp(w)\subset \bZ_{\geq 1}$ .
It is immediate that $\lcode$ restricts to a bijection between $S_{+}$ and $\nvectsp$.
It is natural to inquire what $\nvectsn{n}$ is in bijection with.
Recall that a \emph{descent} of a permutation is an index $i$ such that $w(i)>w(i+1)$.
Then the subset of $S_{+}$ comprising permutations whose last descent is at most $n$, i.e. $w(i)<w(i+1)$ for $i>n$, is in bijection with $\nvectsn{n}$.
Finally for a positive integer $n$, we define $S_n$ to be the set of permutations $w$ satisfying $\supp(w)\subseteq [n]$.
The corresponding codes are what we called $\artin{n}$ earlier, i.e. weak compositions $(c_1,\dots,c_n)$ satisfying $c_i\leq n-i$ for $1\leq i\leq n$.

Note that $S_{\bZ}$ is generated by the adjacent transpositions $(s_i)_{i\in \bZ}$.
Given $w\in S_{\bZ}$, recall that a reduced word for $w$ is a word $i_1\dots i_k$ of minimal length such that $s_{i_1}\cdots s_{i_k}=w$, and that $\red(w)$ denotes the set of reduced words of $w$.

\subsection{(Quasisymmetric) Polynomials}
Let $\alpx=\{x_i\suchthat i\in \bZ\}$ denote a totally ordered set of commuting indeterminates where the letters are sorted according to their indices under the natural order on $\bZ$.
Let $\alpxplus=\{x_i\suchthat i\geq 1\}$.
Given $n\in \bZ_{\geq 0}$, we let $\alpxn{n}=\{x_i\suchthat 1\leq i\leq n\}$.
We let $\bQ[\alpxplus]$ and $\bQ[\alpxn{n}]$ denote the polynomial rings in the appropriate sets of variables respectively. In particular when $n=0$, this ring is simply $\bQ$.

A polynomial $f(x_1,\dots,x_n)$ in $\alpxn{n}$ is said to be \emph{quasisymmetric} if for any composition $(\alpha_1,\dots,\alpha_k)$ where $k\leq n$, the coefficient of $x_{i_1}^{\alpha_1}\cdots x_{i_k}^{\alpha_k}$ and $x_{j_1}^{\alpha_1}\cdots x_{j_k}^{\alpha_k}$
are equal whenever $1\leq i_1< \cdots < i_k\leq n$ and $1\leq j_1< \cdots < j_k\leq n$.
The $\bQ$-vector space consisting of quasisymmetric polynomials is in fact a ring. A distinguished linear basis for this space is given by Gessel's \emph{fundamental quasisymmetric polynomials} \cite{Ges84}.
These polynomials are indexed by compositions; we let $\fund_{\alpha}(\alpxn{n})$ denote the fundamental quasisymmetric polynomial indexed by the composition $\alpha$. 

In the interest of space, we refer the reader to \cite[Chapter 7]{St99} for an in-depth treatment of this subject.
We merely note a fact of value to us: a standard source for quasisymmetric polynomials/functions in combinatorial literature is Stanley's theory of $P$-partitions (and more generally $(P,\omega)$-partitions). 
Given a naturally labeled poset $P$, we have a $P$-partition generating function $K_P(\alpxplus)$ which turns out to equal a sum of fundamental quasisymmetric functions $\fund_{\alpha}(\alpxplus)$, one for each \emph{linear extension} of $P$.
Truncating this to the alphabet $\alpxn{n}$ gives a statement involving quasisymmetric polynomials.

\subsection{Slide polynomials and Schubert polynomials}
\label{subsec:back_slide}

We now proceed to define two distinguished basis for $\bQ[\alpxplus]$. 
Let $\bZ^*$ denote the set of words in the alphabet $\bZ$.
We denote the empty word by $\varepsilon$.
Given $\mathbf{i}=i_1\cdots i_k\in \bZ^*$, let $\compatible(\mathbf{i})$ denote the set of \emph{compatible sequences}:
\[
\{(a_1\geq \dots \geq a_k)\suchthat  1\leq a_j\leq i_j \text{ for  } 1\leq j \leq k \text{ and $a_j>a_{j+1}$ if $i_j>i_{j+1}$ for $1\leq j\leq k-1$}\}.
\]
Now define the \emph{slide polynomial} $\slide({\mathbf{i}})$ by
\begin{align}
  \label{eq:slide_def}
\slide({\mathbf{i}})=\sum_{(a_1,\dots,a_k)\in \compatible(\mathbf{i})}x_{a_1}\cdots x_{a_k}.
\end{align}
In particular we define $\slide(\varepsilon)\coloneqq 1$.

It is the case that distinct $\mathbf{i}$ can result in the same $\slide(\mathbf{i})$.
For instance, check that $\compatible(422)=\compatible(423)$.
Additionally, it may be that $\compatible(\mathbf{i})$ is empty, in which case $\slide(\mathbf{i})=0$.
We isolate a subset of the $\slide(\mathbf{i})$ as $\mathbf{i}$ varies over $\bZ^*$ with the property that any nonzero $\slide(\mathbf{i})$ equals an element in our subset.

Given $\bfc\in \nvectsp$, let $W_{\bfc}$ be the unique nonincreasing word of length $|\bfc|$ with $c_i$ instances of $i$. For instance, if $\bfc=(0,3,2,0,3)$, then $W_{\bfc}=55533222$. 
The \emph{slide polynomial} $\slide_{\bfc}$ is defined to be $\slide(W_{\bfc})$.
The revlex leading term of $\slide_{\bfc}$ is $\alpx^{\bfc}\coloneqq \prod_{i}x_i^{c_i}$.
This property implies that the set $\{\slide_{\bfc}\}_{\bfc\in\nvectsp}$ is a $\bQ$-basis for $\bQ[\alpxplus]$  \cite{AssSea17}.

\begin{example}\label{ex:backslide}
  Let $\bfc=(1,0,2)$.
  Then $W_{\bfc}=331$, and we have
  \[
  \compatible(331)=\{(a_1\geq a_2>a_3)\suchthat 1\leq a_1,a_2\leq 3, 1\leq a_3\leq 1\}.
  \]
  This implies that $\slide_{\bfc}=x_1x_3^2+x_1x_2^2+x_1x_2x_3$.
\end{example}

Having defined slide polynomials, we are ready to introduce the Schubert polynomials of Lascoux--Sch\"utzenberger \cite{Las82}. 
The \emph{Schubert polynomial} $\schub{w}$ for $w\in S_{+}$ is defined here via the  celebrated description due to Billey--Jockusch--Stanley \cite{Bil93}:
\begin{align}
  \label{eq:schub_def}
\schub{w} =\sum_{\mathbf{i}\in \red(w^{-1})}\slide(\mathbf{i}).
\end{align}
The revlex leading term of $\schub{w}$ is equal to $\alpx^{\lcode(w)}$ implying  that the set $\{\schub{w}\}_{w\in S_+}$ is a $\bQ$-basis for $\bQ[\alpxplus]$.
In fact we have (see~\cite[2.5.2]{Man01}): 
\begin{theorem}\label{th:schub_basis}
	For any positive integer $n$, the family of Schuberts polynomials $\schub{w}$ indexed by all $w\in S_+$ such that the last descent in $w$ is in position at most $n$ forms a basis for $\bQ[\alpxn{n}]$.
	Furthermore
	\[
	\bQ\{\schub{w}\suchthat w\in S_n\}=\bQ\{\alpx^{\bfc}\suchthat \bfc \in \artin{n}\}.
	\]
\end{theorem}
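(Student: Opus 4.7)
My plan is to combine the BJS formula~\eqref{eq:schub_def} with a standard triangularity argument in the reverse lexicographic order on monomials, leveraging the already-recorded fact that $\{\schub{w}\}_{w\in S_+}$ is a $\bQ$-basis for $\bQ[\alpxplus]$ with revlex leading term $\alpx^{\lcode(w)}$. Under $w\leftrightarrow\lcode(w)$, the relevant indexing sets line up: $\{w\in S_+ : \text{last descent} \le n\}\leftrightarrow\nvectsn{n}$ and $S_n\leftrightarrow\artin{n}$, so only the appropriate containments in $\bQ[\alpxn{n}]$ remain to be verified.

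I would first show that $\schub{w}\in\bQ[\alpxn{n}]$ whenever $w$ has last descent at position at most $n$. Fix $\mathbf{i}=i_1\cdots i_k\in\red(w^{-1})$. Then $s_{i_1}$ is a left descent of $w^{-1}$, and by taking inverses and using $\ell(u)=\ell(u^{-1})$, this is equivalent to $i_1$ being a right descent of $w$, forcing $i_1\le n$. Any compatible sequence $a_1\ge\cdots\ge a_k$ with $a_j\le i_j$ then satisfies $a_j\le a_1\le i_1\le n$, so $\slide(\mathbf{i})\in\bQ[\alpxn{n}]$; summing over $\red(w^{-1})$ gives $\schub{w}\in\bQ[\alpxn{n}]$. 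The family $\{\schub{w}:\lcode(w)\in\nvectsn{n}\}$ is therefore a linearly independent subset of $\bQ[\alpxn{n}]$ with distinct revlex leading monomials exhausting $\{\alpx^{\bfc}:\bfc\in\nvectsn{n}\}$. Since the revlex downset of $\nvectsn{n}$ inside $\nvects$ is again $\nvectsn{n}$, iterative leading-term cancellation reduces any $f\in\bQ[\alpxn{n}]$ to zero, yielding the first basis claim.

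For the equality $\bQ\{\schub{w}\}_{w\in S_n}=\bQ\{\alpx^{\bfc}\}_{\bfc\in\artin{n}}$, both sides are $n!$-dimensional, so it suffices to prove the inclusion $\subseteq$, i.e.\ that each $\schub{w}$ for $w\in S_n$ is $\bQ$-supported on Artin monomials. I would prove this by downward induction on $\ell(w)$ using divided differences, starting from $\schub{w_0}=x_1^{n-1}x_2^{n-2}\cdots x_{n-1}$ and exploiting that for $i\le n-1$ the operator $\partial_i$ preserves the Artin exponent bounds $c_j\le n-j$; the explicit formula $\partial_i(x_i^a x_{i+1}^b m)=\sum_{j=0}^{a-b-1}m\cdot x_i^{a-1-j}x_{i+1}^{b+j}$ when $a>b$, with $m$ free of $x_i$ and $x_{i+1}$, makes this verification routine. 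The main obstacle is precisely this Artin-preservation step: unlike $\nvectsn{n}$, the set $\artin{n}$ is not closed under revlex downsets inside $\nvectsn{n}$ (for instance $(5,0,0)$ sits below $(0,1,0)$ in revlex but violates $c_1\le 2$), so pure leading-term triangularity cannot conclude this second identity on its own.
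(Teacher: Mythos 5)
Your proof is correct; note that the paper itself does not prove this theorem but cites it to Manivel~\cite[2.5.2]{Man01}, so there is no internal argument to compare against. Your approach is the standard one: for the first claim, the observation that the initial letter of any $\mathbf{i}\in\red(w^{-1})$ is a right descent of $w$, hence $\le n$, cleanly shows $\schub{w}\in\bQ[\alpxn{n}]$, and then revlex triangularity of the leading monomials $\alpx^{\lcode(w)}$ over $\nvectsn{n}$ gives the basis statement. For the second claim, the downward induction via divided differences from $\schub{w_0}=x_1^{n-1}\cdots x_{n-1}$, combined with the Artin-preservation of $\partial_i$ for $i\le n-1$ and the dimension count $|\artin{n}|=|S_n|=n!$, is exactly the right argument, and you correctly identify why leading-term triangularity alone cannot establish it.

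One small quibble: your example $(5,0,0)<_{\mathrm{revlex}}(0,1,0)$ compares exponent vectors of different total degree, which is moot since Schubert polynomials are homogeneous. A degree-matched example for $n=3$ would be $(1,0,1)<_{\mathrm{revlex}}(1,1,0)$ with $(1,1,0)\in\artin{3}$ but $(1,0,1)\notin\artin{3}$, which makes the point you want without the degree mismatch. This does not affect the validity of the argument, since you ultimately supply the divided-difference step rather than rely on the triangularity heuristic there.
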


\begin{example}\label{ex:schub}
  Consider $w=14253\in S_{5}$ in one line notation.
  Then $\red(w^{-1})=\{423,243\}$.
  We leave it to the reader to check that
   $\slide(423)=\slide_{(0,2,0,1)}$ and $\slide(243)=\slide_{(1,2)}$.
 By~\eqref{eq:schub_def} 
 \begin{align*}
 \schub{14253}=\slide_{(0,2,0,1)}+\slide_{(1,2)} =x_2^2x_4+x_2^2x_3+x_1^2x_4+x_1^2x_3+x_1^2x_2+x_1x_2x_4+x_1x_2x_3+x_1x_2^2.
\end{align*}
\end{example}

\section{Forest polynomials}
\label{sec:forest_polynomials}

In this section we define forest polynomials, that we index by what we call indexed forests. These are defined as an extension of the notion of binary trees, so we first  briefly recall standard notions about those. 

A (plane) \emph{binary tree} is given recursively by being either the empty set, or a node with a left subtree and a right subtree.
 Binary trees with $n$ nodes are counted by Catalan numbers. Complete binary trees consist of replacing the empty set in the definition by a ``leaf''.
A complete binary tree with $n$ nodes has $n+1$ leaves. 
The \emph{canonical labeling} of a binary tree with $n$  nodes corresponds to labeling these nodes from $1$ to $n$ in inorder. 
By aligning all $n+1$ leaves on an integer line, the canonical order corresponds naturally to labeling the unit segments by $1$  through $n$ reading from left to right, cf. Figure~\ref{fig:canonical_labeling}.

\begin{figure}[!ht]
\centering
\includegraphics[width=0.4\linewidth]{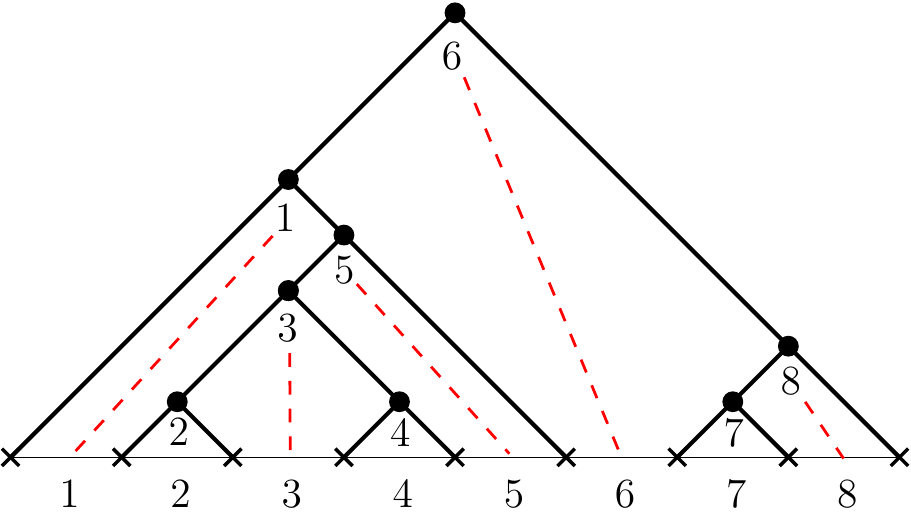}
\caption{A binary tree and its canonical labeling
\label{fig:canonical_labeling}}
\end{figure}

\subsection{Indexed forests}

\begin{definition}Let $S$ be a finite set of integers. It decomposes uniquely as $S=I_1\sqcup\ldots\sqcup I_k$, where each $I_j$ is a maximal subset of consecutive integers in $S$. An \emph{indexed forest $F$ on $S$} is the data of a binary tree $T_j$ with $|I_j|$ internal nodes for any $j\in[k]$.
\end{definition}

In this definition $S$ is the {\em support} of $F$, and we write $S=\supp(F)$. 
We will simply talk of an indexed forest if we do not require to specify the support. 
Let $\IF(S)$ be the set of indexed forests with support $S$, $\IF$ be the set of all indexed forests, and $\IF_+$ those with support consisting of positive integers.

   We extend the canonical labeling to indexed forests by labeling the internal nodes of $T_j$ by the elements of $I_j$ in inorder: by definition, $|I_j|=|T_j|$, so this is just a shift of the usual labeling.
  Pictorially, this labeling is best read as in   Figure~\ref{fig:indexed_forest} (ignoring the red labels for the moment): we extend the case of binary trees, where leaves are naturally aligned on the horizontal axis, in which the labels in $I_j$ are  matched with the inner nodes of $T_j$.   
 Given $u\in T_j$, we let $\interval(u,F)$ be the set of canonical labels of all internal nodes in the subtree rooted at~$u$.

\begin{figure}[!ht]
\centering
\includegraphics[width=0.8\linewidth]{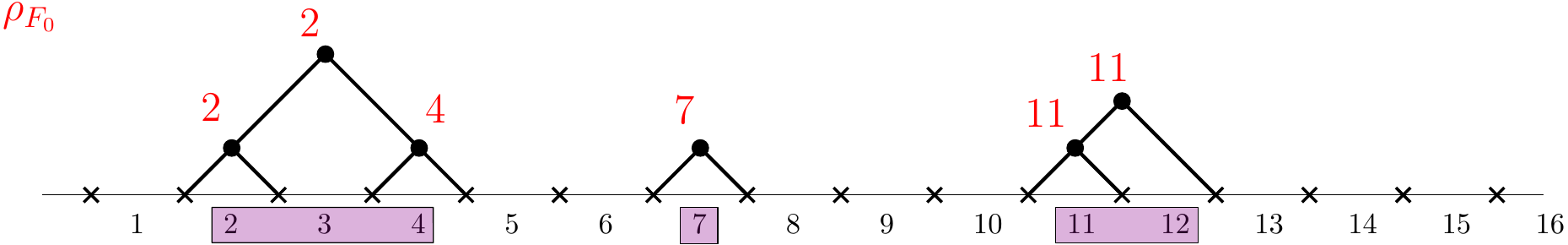}
\caption{An indexed forest $F_0$ with support $\{2,3,4,7,11,12\}$
\label{fig:indexed_forest}}
\end{figure}

Let $\internal(F)$ be the set of \emph{internal} nodes of $F$. Let $|F|$ be its size $|S|$. Let $\can_F:\internal(F)\to S$ be the canonical labeling defined above. We call a node $v\in \internal(F)$ \emph{terminal} if both its children are leaves. 
Let $\terminal(F)$ denote the set of terminal nodes in $F$.
The  \emph{left support} $\lsupp(F)$ of $F$ is the set of labels of nodes whose left child is a leaf. 
The indexed forest in Figure~\ref{fig:indexed_forest} has size $6$, it has 4 terminal nodes, its support is $\supp(F)=\{2,3,4\}\sqcup \{7\}\sqcup\{11,12\}$ while its left support is $\lsupp(F)=\{2,4,7,11\}$.\smallskip

Define $\rho_F:\operatorname{IN}(F)\to \bZ$ by
\[
\rho_F(v)\coloneqq \text{the node label reached by following left edges down from $v$}.
\]
As an example, see red labels in Figure~\ref{fig:indexed_forest}.
Now define $\bfc(F)=(c_i)_{i\in \bZ}\in \nvects$ by
\[
c_i=|\{v\in \internal(F)\suchthat \rho_F(v)=i\}|.
\]
\begin{example}
For $F$ in Figure~\ref{fig:indexed_forest} we get $\bfc(F)=(0,\textcolor{magenta}{2,0,1,0},0,\textcolor{magenta}{1,0},0,0,\textcolor{magenta}{2,0,0},0,0,\ldots)\in \nvectsp$.
Note that the highlighted contiguous subsequences are in fact elements of $\abbtop{i}$ for various $i\geq 2$.
The intermediary (non-highlighted) $0$s of course correspond to the only element in $\abb{1}$.
\end{example}

It is clear that any element of $\nvects$ can be written uniquely as a concatenation of elements in $\abbtop{i}$ for varying $i\in \bZ_{\geq 1}$.
In view of the folklore bijection between elements of $\abbtop{i}$ and complete binary trees with $i-1$ internal nodes, we get the following:
\begin{proposition}
\label{prop:forests_and_codes}
The correspondence $F\mapsto \bfc(F)$ is a bijection between $\IF$ and $\nvects$. It restricts to a bijection between $\IF_+$ and $\nvectsp$.
\end{proposition}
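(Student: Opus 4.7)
The plan is to assemble the result from the two facts noted just before the proposition: every element of $\nvects$ decomposes uniquely into a concatenation of $\abbtop{i}$-blocks (for varying $i \geq 1$), and $\abbtop{i}$ is in bijection with complete binary trees having $i - 1$ internal nodes via a folklore Catalan bijection.

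First I would treat a single tree. Let $T$ have $n$ internal nodes canonically labeled on $I = [a, a+n-1]$. The claim is that $(c_a, c_{a+1}, \ldots, c_{a+n}) \in \abbtop{n+1}$, with $c_i$ defined from $T$ as in the forest case. The weight equals $n$ since each internal node $v$ contributes $1$ to $c_{\rho_T(v)}$ and $\rho_T(v) \in I$, while $c_{a+n} = 0$ because $a+n$ labels a leaf rather than an internal node. The remaining $\abbtop{n+1}$-constraints translate, via $\rho$, into the statement that at most $j - 1$ internal nodes of $T$ have $\rho$-value among the $j$ rightmost of these positions, which is precisely the standard encoding of a complete binary tree by left-spine lengths, i.e.\ the folklore Catalan bijection. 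Hence the single-tree map is a bijection between binary trees on $I$ and $\abbtop{|I|+1}$.

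For a general $F \in \IF$ with $S = I_1 \sqcup \cdots \sqcup I_k$ and $I_j = [a_j, b_j]$, the vector $\bfc(F)$ is then the concatenation, reading left to right along $\bZ$, of the blocks $(c_{a_j}, \ldots, c_{b_j + 1}) \in \abbtop{|I_j| + 1}$ interspersed with singletons $(0) \in \abbtop{1}$ at every position not covered. Maximality of $I_j$ ensures $b_j + 1 \notin \supp(F)$, so $c_{b_j+1} = 0$ and the blocks fit together without overlap. Invoking the uniqueness of the $\abbtop{i}$-decomposition furnishes the inverse map: decompose a given $\bfc$, rebuild each $T_j$ from a nontrivial block via the inverse folklore bijection, and read off the $I_j$ as the positions of the nontrivial blocks (minus the trailing zero).

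The main obstacle is to see that the nontrivial blocks in the unique decomposition of $\bfc$ match precisely the maximal support intervals $I_j$ of the forest being reconstructed. This follows from the elementary observation that any element of $\abbtop{i}$ with $i \geq 2$ has strictly positive first entry -- subtracting the constraint $\sum_{k=1}^{i-1} c_{i+1-k} \leq i - 2$ from the weight equality $\sum_{k=1}^{i} c_{i+1-k} = i - 1$ forces $c_1 \geq 1$. Hence each nontrivial block begins at a nonzero entry of $\bfc$ and includes exactly one trailing zero, matching the block structure produced by $F$. The restriction to $\IF_+ \leftrightarrow \nvectsp$ is then immediate since both conditions say that the support lies in $\bZ_{\geq 1}$.
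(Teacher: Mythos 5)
Your proposal is correct and fleshes out the same two-step argument the paper invokes: the unique decomposition of any $\bN$-vector into $\abbtop{i}$-blocks, plus the Catalan bijection between $\abbtop{i}$ and binary trees with $i-1$ internal nodes. The paper leaves both ingredients as assertions ("it is clear" and "folklore"), while you correctly identify and settle the one point that actually needs checking — that the nontrivial blocks of the decomposition of $\bfc(F)$ align exactly with the maximal support intervals $I_j$, which you get from the observation that every element of $\abbtop{i}$ with $i\geq 2$ has positive first entry.
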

Let us denote the correspondence in the opposite direction as $\bfc\mapsto F(\bfc)$.
Consequently, objects indexed by $\nvectsp$ may be indexed by $\IF$, and vice versa.
We will employ this correspondence at will and without prior warning.
We will on occasion omit parentheses, commas, and trailing zeros in writing our $\bN$-vectors for the sake of clarity.

\subsection{Forest polynomials}
\label{subsec:forest}

We are ready to introduce forest polynomials. 

\begin{definition}
  \label{def:forest} Let $F\in \IF$. The \emph{forest polynomial} $\forestpoly_F\in\bQ[\alpxplus]$  is defined as
\begin{align}
  \label{eq:backforest}
\forestpoly_F=\sum_{\kappa} \prod_{v\in \internal(F)}x_{\kappa(v)}
\end{align}
where the sum is over all labelings $\kappa:\internal(F)\to\bZ_{\geq 1}$ that are bounded above by $\rho_F$, weakly increasing down left edges and strictly increasing down right edges. In particular, if $|F|=0$, then we set $\forestpoly_F\coloneqq 1$.
\end{definition}

Note that if $F\notin\IF_+$, then $\forestpoly_F=0$: indeed, in this case $\rho_F(v)\leq 0$ for the smallest element $v$ in inorder, and thus $\kappa(v)\leq \rho_F(v)$ cannot be satisfied since $\kappa(v)\geq 1$ by definition. 
Also it is clear from this definition that if $F$ is composed of indexed trees $T_1$ through $T_k$ for some $k\geq 1$, then the following multiplicative property holds:
\begin{align}
  \label{eq:multiplicative}
  \forestpoly_F=\prod_{1\leq i\leq k}\forestpoly_{T_i}.
\end{align}

\begin{figure}
\centering
\includegraphics[width=0.4\linewidth]{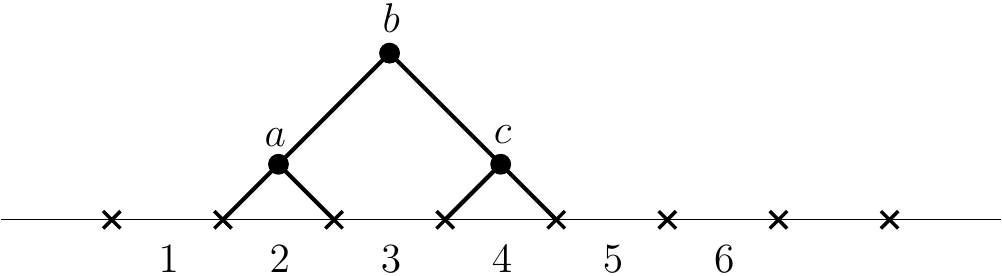}
\caption{An indexed tree $T_0$ with $\bfc(T_0)=(0,2,0,1,0,\dots)$}
\label{fig:forest_0201}
\end{figure}

\begin{example}
\label{exa:pT0}
Consider the indexed tree $T_0$ in Figure~\ref{fig:forest_0201}. It is also the leftmost indexed tree in the indexed forest in Figure~\ref{fig:indexed_forest}. Then
\begin{align}
\label{eq:running_example}
\forestpoly_{T_0}=\sum_{\substack{{2}\geq a\geq b>0\\{4}\geq c>b>0}}x_ax_bx_c
=x_{2}^{2} x_{4} + x_{1} x_{2} x_{4} + x_{1}^{2} x_{4} + x_{2}^{2} x_{3} + x_{1} x_{2} x_{3} + x_{1}^{2} x_{3}  + x_{1}^{2} x_{2}+ x_{1} x_{2}^{2}.
\end{align}
which coincides with the Schubert polynomial $\schub{14253}$, cf. Example~\ref{ex:schub}. 
By the multiplicative property~\eqref{eq:multiplicative}, for $F_0$ in Figure~\ref{fig:indexed_forest} we get \[
 \forestpoly_{F_0}=\forestpoly_{T_0}\left(\sum_{i=1}^7x_i\right)\left(\sum_{1\leq i\leq j\leq 11}x_ix_j\right).
 \]
\end{example}

We now begin to establish some basic properties of forest polynomials.
The first of these is the fact that this family of polynomials, like Schubert and  slide polynomials, is a basis for the polynomial ring.
This in turn motivates the study of transition matrices between these various bases, and the Schubert into forest expansion is an important cog in our setup.
The reader should compare the following statement to Theorem~\ref{th:schub_basis}.

\begin{proposition}
\label{prop:forests_as_basis}
The family $\left(\forestpoly_F\right)_{F\in\IF_+}$ is a basis for $\bQ[\alpxplus]$. 
More precisely, for any positive integer $n$ the family $\{\forestpoly_F\}$ as $F$ ranges over all indexed forests such that $\lsupp(F)\subseteq [n]$ is a basis for $\bQ[\alpxn{n}]$.
 Furthermore
	\[
	\bQ\{\forestpoly_F\suchthat \supp(F)\subseteq [n-1]\}=\bQ\{\alpx^{\bfc}\suchthat \bfc \in \abb{n}\}.
	\]
\end{proposition}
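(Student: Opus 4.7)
My plan is to establish that $\forestpoly_F$ has reverse-lexicographic leading monomial $\alpx^{\bfc(F)}$ with coefficient one, and then deduce all three assertions from this unitriangularity together with Proposition~\ref{prop:forests_and_codes}.

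First, $\kappa=\rho_F$ is itself a valid labeling in Definition~\ref{def:forest}: it saturates the pointwise upper bound, it is constant along left edges (following left from $v$ or from its left child reaches the same leftmost terminal descendant), and it is strictly increasing along right edges, since for a right child $w$ of $v$ the chain $\rho_F(v)\leq\can_F(v)<\rho_F(w)$ holds because $w$'s subtree lies strictly to the right of $v$ in the inorder traversal. This labeling produces the monomial $\alpx^{\bfc(F)}$. For any other valid labeling $\kappa$, the pointwise bound $\kappa(v)\leq\rho_F(v)$ yields the reverse partial sum inequality $|\{v:\kappa(v)\geq m\}|\leq|\{v:\rho_F(v)\geq m\}|$ for all $m$, with strict inequality at $m=\rho_F(v_0)$ for any $v_0$ where $\kappa<\rho_F$; this forces $\bfc(\kappa)$ to be strictly revlex smaller than $\bfc(F)$.

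Combined with the bijection $F\mapsto\bfc(F)$ from $\IF_+$ to $\nvectsp$, unitriangularity gives the basis of $\bQ[\alpxplus]$. The $n$-variable refinement is immediate: $\lsupp(F)\subseteq[n]$ is equivalent to $\bfc(F)\in\nvectsn{n}$, in which case any valid labeling has $\kappa(v)\leq\rho_F(v)\leq n$, so $\forestpoly_F\in\bQ[\alpxn{n}]$; the triangular matrix restricts to this subfamily.

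For the ABB identification, the triangular expansion (applied together with $\kappa\leq\rho_F$, which propagates the ABB bound from $\bfc(F)$ to every $\bfc(\kappa)$ appearing) reduces the statement to the set equality $\{\bfc(F):\supp(F)\subseteq[n-1]\}=\abb{n}$. The forward inclusion hinges on the estimate $|\{v\in\internal(F):\rho_F(v)>m\}|\leq|\supp(F)\cap(m,\infty)|$, which I would prove tree-by-tree by induction on the size of a binary tree, casing on where $m$ falls relative to the root label $a+\ell$ and recursing on the left and right subtrees; setting $m=n-j$ yields the ABB inequalities. For the reverse inclusion, suppose $M=\max\supp(F(\bfc))\geq n$ and let $I$ be the interval of the rightmost tree, with $\max I=M$. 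This tree occupies an $\abbtop{|I|+1}$-block at positions $[\min I,M+1]$ of total weight $|I|=M-\min I+1$, and since $\bfc\in\nvectsn{n}$ this weight is concentrated in $[\min I,n]$; the ABB inequality with $j=n+1-\min I$ then forces $|I|\leq n-\min I$, contradicting $M\geq n$. The main obstacle is the case analysis in the key estimate and the careful bookkeeping linking the $\abbtop{i}$-block decomposition of $\bfc(F)$ to subtree intervals in $F$; the leading-term step is routine once the revlex convention (larger variable index is bigger) is fixed.
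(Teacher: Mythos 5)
Your proof is correct and follows the paper's approach exactly: the paper's one-line proof simply invokes the fact that $\alpx^{\bfc(F)}$ is the revlex leading term of $\forestpoly_F$, which is precisely the unitriangularity you establish, and everything else you do is the routine bookkeeping the paper leaves implicit. One small simplification worth noting: the estimate $|\{v:\rho_F(v)>m\}|\leq|\supp(F)\cap(m,\infty)|$ needs no induction, since $\rho_F(v)\leq\can_F(v)$ for every $v$ and $\can_F$ is a bijection onto $\supp(F)$, so $\{v:\rho_F(v)>m\}\subseteq\{v:\can_F(v)>m\}$ directly.
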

\begin{proof}
Both statements follow easily from the fact that $\alpx^{\bfc(F)}$ is the revlex leading term of~$\forestpoly_{F}$.
\end{proof}

Recall that our definition of slide polynomials used a word rather than a weak composition. 
While different words can lead to the same slide polynomial, this alternative indexing has its benefits and gives added flexibility in manipulating slide polynomials.
We now describe a similar perspective on forest polynomials by introducing a family of flags that contains $\rho_F$.
To this end we need local binary search forests.

\subsection{Local binary search forests} We recall the alphabet $\Zaug$ introduced in \cite{NT23}. It is the ordered alphabet with letters $\lf{i}{j}$ where $i\in \bZ$ and $j\in \bP$. We have a linear order on $\bZ\sqcup \Zaug$ given by $i<\lf{i}{1}<\lf{i}{2}<\lf{i}{3}<\cdots<i+1$ for all $i$. The \emph{value} of $\lf{i}{j}$ by $\mathrm{val}(\lf{i}{j})=i$.

Let $F\in \IF$. We denote by $\lbs(F)$ the set of all {\em injective} functions $\rho:\internal(F)\to\Zaug$ that satisfy the following constraints:
\begin{itemize}
\item $\rho(v)<\rho(u)$ if $v$ is the left child of $u$.
\item $\rho(u)<\rho(v)$ if $v$ is the right child of $u$.
\item $\mathrm{val}(\rho(u))\in \interval(u,F)$ for all $u\in\internal(F)$.
\end{itemize}
$\lbs(F)$ is the set of ``local binary search'' labelings of $F$: this usually refers in the literature to labelings satisfying only the first two conditions above. 
We denote by $\lbs$ the union of $\lbs(F)$ over all $F\in\IF$, that is the set of all pairs $(F,\rho)$.

If $\rho$ is a function from $\internal(F)$ to $\bZ$ instead of $\Zaug$, we can replace labels of vertices with a fixed $\rho$-value, say $i$, by $\lf{i}{1},\lf{i}{2},\ldots$ in inorder. 
This transforms any such function $\rho$  into an injective function from $\internal(F)$ to $\Zaug$, and we will silently identify the two functions.

In this manner we have that $\rho_F\in \lbs(F)$.
 Now for any $\rho\in \lbs(F)$, define $\forestpoly(F,\rho)$ following  Definition~\ref{def:forest}, except that the upper bounds for the labelings $\kappa$ are given by $\rho$ instead of $\rho_F$.
 It turns out that this does not modify the set of polynomials.

\begin{proposition}
\label{prop:betaP_is_betaF}
For any $F\in\IF$ and $\rho\in \lbs(F)$, we have $\forestpoly(F,\rho)=\forestpoly_F$.
\end{proposition}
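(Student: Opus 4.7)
The plan is to establish Proposition~\ref{prop:betaP_is_betaF} by showing that the set of admissible labelings $\kappa$ appearing in Definition~\ref{def:forest} coincides regardless of whether the flag is taken to be $\rho_F$ or an arbitrary $\rho\in\lbs(F)$; since the monomial $\prod_v x_{\kappa(v)}$ depends only on $\kappa$, this equality of sets will give the equality of polynomials.

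I would first reduce to the case where $F$ is a single indexed tree, invoking the multiplicative property~\eqref{eq:multiplicative}. This is legitimate because the BST conditions defining $\lbs(F)$ couple only parent--child pairs within one tree and the containment $\mathrm{val}(\rho(u))\in\interval(u,F)$ confines $\mathrm{val}(\rho(u))$ to the canonical labels of $u$'s own tree; hence any $\rho\in\lbs(F)$ restricts to an LBS labeling of each component, and the $\kappa$-constraints also decouple accordingly. The easy direction is then that any $\kappa$ admissible for $\forestpoly_F$ is automatically admissible for $\forestpoly(F,\rho)$: writing $L(v)$ for the leftmost descendant of $v$, the membership $\mathrm{val}(\rho(v))\in\interval(v,F)$ yields $\mathrm{val}(\rho(v))\geq \min\interval(v,F)=\can(L(v))=\mathrm{val}(\rho_F(v))$, so the upper bound coming from $\rho$ is weaker than that coming from $\rho_F$.

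The substance of the argument is the reverse inclusion, which I would prove by strong induction on the size of the subtree rooted at $v$: every $\kappa$ that is monotone and satisfies $\kappa\leq\rho$ automatically satisfies $\kappa(v)\leq\can(L(v))$ for every $v\in\internal(F)$. The case where $v$ has a left child $v_L$ is routine: monotonicity down the left edge yields $\kappa(v)\leq\kappa(v_L)$, the identity $L(v_L)=L(v)$ holds by definition, and the inductive hypothesis applied to the smaller subtree rooted at $v_L$ closes the step.

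The main obstacle, and the place where one must use the structure of LBS labelings nontrivially, is the case where $v$ has no left child, so $L(v)=v$ and the bound $\kappa(v)\leq\can(v)$ must be produced from the data at hand. If $v$ is terminal then $\interval(v,F)=\{\can(v)\}$ immediately forces $\mathrm{val}(\rho(v))=\can(v)$ and we are done. Otherwise $v$ has a right child $v_R$; the crucial observation is that the inorder convention of the canonical labeling places the entire subtree of $v_R$ immediately after $v$, so the smallest canonical label in that subtree is $\can(v)+1$, i.e.\ $\can(L(v_R))=\can(v)+1$. Combining the strict-increase condition $\kappa(v)<\kappa(v_R)$ with the inductive hypothesis $\kappa(v_R)\leq\can(L(v_R))=\can(v)+1$ forces $\kappa(v)\leq\can(v)$, completing the induction and thereby the proof.
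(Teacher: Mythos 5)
Your proof is correct and follows essentially the same route as the paper's: observe that $\rho\geq\rho_F$ pointwise (so one inclusion of admissible $\kappa$ is immediate), then show the converse inclusion by a bottom-up induction with the same three cases — terminal node, node with a left internal child (weak inequality plus $\rho_F(u)=\rho_F(v_L)$), and node with only a right internal child (strict inequality plus $\rho_F(u)=\rho_F(v_R)-1$). The only cosmetic difference is that you explicitly reduce to a single tree via the multiplicative property, whereas the paper runs the induction directly on $F$; this does not change the substance.
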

\begin{proof}
First, note that the values of $\rho$ are necessarily bounded below by $\rho_F$ since the minimum of $\interval(u,F)$ is $\rho_F(u)$.
We thus need only show that any labeling $\kappa:\internal(F)\to \bP$ bounded above by $\rho$ is in fact bounded by $\rho_F$.
This is true for $u$ a terminal node, since $\mathrm{val}(\rho(u))=\rho_F(u)$ in this case.
Now assume $u$ is not terminal. If it has a left child $v$, so that $\rho_F(u)=\rho_F(v)$, then by induction we have $\kappa(v)\leq \rho_F(v)$ and since $\kappa(u)\leq \kappa(v)$ we obtain $\kappa(u)\leq \rho_F(u)$ as wanted.
 Otherwise $u$ has no left child but has a right child $v$. In this case $\rho_F(u)=\rho_F(v)-1$ and $\kappa(u)<\kappa(v)$. By induction $\kappa(v)\leq \rho_F(v)$ and thus $\kappa(u)\leq \rho_F(u)$ which completes the proof.
\end{proof}

\subsection{From forest polynomials to slide polynomials}
\label{subsec:backforests_into_backslides}
An indexed forest $F$ naturally determines a \emph{forest poset}, by interpreting the parent-child relation as a cover relation in the Hasse diagram on $\internal(F)$ .
The restriction map $\rho_F$ is determined by the leaves, and this in turn places constraints on the labels on $\internal(F)$.
This entire framework, including the weak and strict inequalities on left and right edges respectively, is evocative of Stanley's theory of $(P,\omega)$-partitions \cite{StThesis,GesSurvey}.

As the authors explored, drawing motivation from ibid. and recent work of Assaf--Bergeron \cite{AssBer20}, forest polynomials can indeed be described as generating polynomials for a class of $(P,\Phi)$-partitions; see \cite[Section 3.3]{NT23}.
It is a consequence of the theory developed in \cite{NT23} that forest polynomials permit an expansion in terms of slide polynomials akin to how a $P$-partition generating function/polynomial expands in terms of Gessel's fundamental quasisymmetric functions/polynomials.
We need some more notation to describe it.

A \emph{decreasing labeling} of an indexed forest $F$ is a labeling of $\internal(F)$  with labels drawn from $\{1,\dots,|F|\}$ so that the labels always decrease as one moves away from root nodes.
We denote the set of decreasing labelings of $F$ by $\decreasing(F)$.
Such labelings  give \emph{linear extensions} of the forest poset underlying $F$.
For $\ell\in \decreasing(F)$, let $\mathbf{i}_{\ell}\in \bZ^*$ be the word obtained by recording the $\rho_F$ values as one visits internal nodes in accordance with the linear extension $\ell$.
We then have:
\begin{proposition}[{\cite[Equation 3.4]{NT23}}]
\label{prop:forests_into_slides}
Given $F\in \IF $, we have the following expansion for $\forestpoly_F$ in terms of slide polynomials:
\begin{align}
\label{eq:forests_into_slides}
\forestpoly_F=\sum_{\ell\in \decreasing(F)}\slide(\mathbf{i}_{\ell}).
\end{align}
\end{proposition}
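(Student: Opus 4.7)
The plan is to invoke the fundamental lemma of $(P,\omega)$-partitions, adapted to the flagged forest setting developed by Assaf--Bergeron and the authors. View $\internal(F)$ as a forest poset with each root as the maximum of its component, labeled so that left edges encode ``natural'' relations and right edges encode ``strict'' relations. The labelings $\kappa$ counted in~\eqref{eq:backforest} are then precisely the flagged $(P,\omega)$-partitions with upper-bound flag $\rho_F$. The classical $P$-partition framework partitions such labelings along linear extensions of the poset --- which here are exactly the elements of $\decreasing(F)$ --- and each resulting piece will assemble into one slide polynomial.

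The first step is to associate, to any $\kappa$ appearing in~\eqref{eq:backforest}, a canonical $\ell_\kappa\in\decreasing(F)$. To do this I would totally order $\internal(F)$ in weakly decreasing order of $\kappa$-value, breaking ties in the unique way compatible with both the poset relations (a tied ancestor must precede its descendant, which can happen only along maximal left-chains) and the values of $\rho_F$ (incomparable tied nodes are ordered by weakly increasing $\rho_F$). Assigning labels $1,2,\ldots,|F|$ in visitation order yields a decreasing labeling $\ell_\kappa$, because the condition that $\kappa$ increases away from each root forces the root to receive the largest $\ell$-label in its component.

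The second step is, for each fixed $\ell\in\decreasing(F)$, to identify the fiber $\{\kappa:\ell_\kappa=\ell\}$ with $\compatible(\mathbf{i}_\ell)$. Reading off $\kappa$-values in the visitation order prescribed by $\ell$ produces a weakly decreasing sequence $a_1\geq\cdots\geq a_k$; the flag $\kappa(v)\leq\rho_F(v)$ translates to $a_j\leq(\mathbf{i}_\ell)_j$; and the key claim is that the forced strict inequalities $a_j>a_{j+1}$ in the fiber are precisely those imposed by the descent set of $\mathbf{i}_\ell$. Summing $x_{a_1}\cdots x_{a_k}$ over the fiber then yields $\slide(\mathbf{i}_\ell)$ by~\eqref{eq:slide_def}, and summing over $\decreasing(F)$ recovers the original expression for $\forestpoly_F$.

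The main obstacle is the verification of the descent-to-strictness correspondence underlying the second step. Concretely, one must check that two consecutively visited nodes $u,v$ in the order given by $\ell$ can be assigned equal $\kappa$-values if and only if $(\mathbf{i}_\ell)$ does not descend between them. The geometric fact that makes this work is that $\rho_F$ is constant along each maximal left-chain and jumps upward only when moving from a node to a subtree entered by a right edge; combined with the tie-breaking rule, this ensures descents of $\mathbf{i}_\ell$ align exactly with pairs of consecutive nodes that are neither related by a left edge nor incomparable-and-weakly-sortable. This is the crux worked out in \cite[Section~3.3]{NT23}, after which the identity~\eqref{eq:forests_into_slides} follows by assembling the fiberwise bijections.
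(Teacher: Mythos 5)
Your proposal mirrors the paper's treatment: the paper does not reprove this proposition but cites \cite[Equation~3.4, Section~3.3]{NT23}, and your argument is precisely the fundamental lemma of $(P,\omega)$-partitions in the flagged setting, which is what that reference develops. The decomposition into fibers indexed by linear extensions, the identification of each fiber with $\compatible(\mathbf{i}_\ell)$, and the verification that flagged weak/strict edge conditions translate into the bound-and-descent conditions defining a slide polynomial --- this is exactly the right skeleton, and your identification of the descent-to-strictness correspondence as the crux is accurate.

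One directional slip in the tie-breaking rule: since $\kappa$ weakly \emph{increases} down left edges (roots have the smallest $\kappa$-values, leaves the largest) while a decreasing labeling must assign the \emph{largest} $\ell$-label to each root, a tied comparable pair along a left chain must be resolved by visiting the \emph{descendant} before the ancestor, so that the ancestor receives the larger $\ell$-label. Your parenthetical asserts the opposite (``a tied ancestor must precede its descendant''), which is inconsistent with your own subsequent observation that roots receive the largest $\ell$-labels; as written it would produce an increasing rather than decreasing labeling. The incomparable tie-break by increasing $\rho_F$ and the rest of the argument are fine once this is corrected.
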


\begin{example}
\label{ex:bf_into_bs}
	Consider the indexed tree $T_0$ in Figure~\ref{fig:bf_into_bs_example}.
	The $\rho_{T_0}$ values are in red. On the right are the two labelings in $\decreasing(T_0)$. Reading the $\rho_{T_0}$ values according to the two decreasing labelings gives the words $422$ and $242$ respectively. Proposition~\ref{prop:forests_into_slides} then says that
	\begin{align*}
	\forestpoly_{T_0}&=\slide(422)+\slide(242)=\slide_{(0,2,0,1)}+\slide_{(1,2)}\\
	&=x_{2}^{2} x_{4} + x_{1} x_{2} x_{4} + x_{1}^{2} x_{4} + x_{2}^{2} x_{3} + x_{1} x_{2} x_{3} + x_{1}^{2} x_{3}  + x_{1}^{2} x_{2}+ x_{1} x_{2}^{2}.
	\end{align*}
This matches the monomial expansion from Example~\ref{exa:pT0}.
\end{example}

\begin{figure}[!ht]
\includegraphics[width=0.8\linewidth]{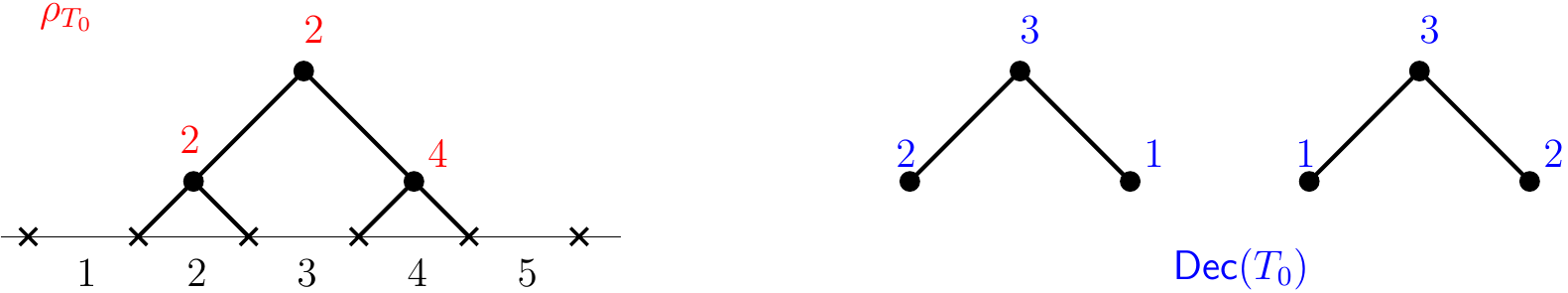}
\caption{The indexed tree $T_0$ with its two decreasing labelings}
\label{fig:bf_into_bs_example}
\end{figure}

More generally, let $\rho\in \lbs(F)$ and $P=(F,\rho)$. 
We abuse notation and set $\forestpoly(P)\coloneqq \forestpoly(F,\rho)$.
Then
\begin{align}
\label{eq:forests_into_slides_generalized}
\forestpoly(P)=\sum_{\ell\in \decreasing(F)}\slide(\mathbf{i}_{\ell}),
\end{align} 
where the word $\mathbf{i}_{\ell}$ records the values of $\rho$ following the linear extension $\ell$. We have $\mathbf{i}_{\ell}\in\injwords{\Zaug}$ which by definition is the set of words in $\Zaug$ with pairwise distinct letters.

The result follows directly from the theory in~\cite{NT23}.
\smallskip

A particular case of forest polynomials deserves special mention. The expansion in Proposition~\ref{prop:forests_into_slides} has one term precisely when our indexed forest $F$ has a single linear extension. This happens precisely when $F$ is an indexed \emph{linear} tree, i.e. an indexed tree which forms a path.

\begin{proposition}\label{prop:fundamentals_as_forests}
$F\in \IF_+$ is an indexed linear tree if and only if $\supp(\bfc(F))$ is an interval in $\bZ_{\geq 1}$, if and only if $\lsupp(F)$ is an interval in $\bZ_{\geq 1}$.

In that case, write $\bfc(F)=(0^j,\alpha_1,\dots,\alpha_k)$ where $\alpha_i>0$ for $i=1,\dots,k$ and $j\geq 0$.
Then
\begin{align}
\label{eq:forest_contain_fundamentals}
  \forestpoly_F=\fund_{\alpha}(x_1,\dots,x_{k+j}).
\end{align}
\end{proposition}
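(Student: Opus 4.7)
\noindent\emph{Proof proposal.} The plan is first to prove the three-way equivalence in the first assertion, then to establish the polynomial identity.

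I would begin by noting $\lsupp(F)=\supp(\bfc(F))$, which is immediate from definitions: for any $v\in\internal(F)$, $\rho_F(v)$ equals the canonical label of the leftmost descendant of $v$, hence lies in $\lsupp(F)$, while conversely every $u\in\lsupp(F)$ satisfies $u=\rho_F(\text{node labeled }u)$. So the second and third conditions literally coincide, and it remains to relate either to being an indexed linear tree. If $F$ is an indexed linear tree on support $[a,b]$ with internal nodes chained as $v_0\to v_1\to\cdots\to v_{n-1}$ from the root, a short induction on $|F|$ shows $\lsupp(F)=[a,a+t]$ for some $t\geq 0$: the root carries label $a$ or $b$ depending on whether its internal child is a right or left child, and removing it yields a linear tree on $[a+1,b]$ or $[a,b-1]$ whose left support is, by induction, an interval starting at its own minimum label.

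For the converse, if $F$ has $m\geq 2$ components with supports $I_1,\ldots,I_m$, then $\min I_j\in\lsupp(F)$ for each $j$ (since the leftmost internal node in a component necessarily has a leaf left child), while $\min I_{j+1}\geq\max I_j+2$ produces gaps in $\lsupp(F)$. Otherwise $F$ is a single tree containing an internal node $u$ with two internal children; then $\can_F(u)\notin\lsupp(F)$, whereas $\rho_F(R)=\can_F(u)+1\in\lsupp(F)$ where $R$ is the right child of $u$ (its subtree carries labels $[\can_F(u)+1,\max\interval(u,F)]$), and the root's leftmost descendant contributes $a=\min\supp(F)\in\lsupp(F)$ with $a<\can_F(u)$; so $\lsupp(F)$ is not an interval.

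For the polynomial identity, I would invoke Proposition~\ref{prop:forests_into_slides}. The forest poset of a linear tree is a chain, so $|\decreasing(F)|=1$ and $\forestpoly_F=\slide(\mathbf{i})$ for a single word $\mathbf{i}$. Letting $r_i$ count right-child steps along the path $v_0\to v_i$, a short induction gives $\rho_F(v_i)=(j+1)+r_i$, since each right-child step removes the current minimum from $\interval(v_i,F)$ while each left-child step removes the maximum. The unique decreasing labelling visits internal nodes in the order $v_{n-1},\ldots,v_0$, so $\mathbf{i}$ is the weakly decreasing word whose value-multiplicities recover the nonzero entries of $\bfc(F)$; thus $\mathbf{i}=W_{\bfc(F)}=(j+k)^{\alpha_k}(j+k-1)^{\alpha_{k-1}}\cdots(j+1)^{\alpha_1}$ and $\forestpoly_F=\slide_{\bfc(F)}$. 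The remaining identification $\slide_{\bfc(F)}=\fund_\alpha(x_1,\ldots,x_{k+j})$ follows by reversing each compatible sequence via $b_i=a_{n+1-i}$: the descents of $W_{\bfc(F)}$ become the strict-increase positions at $\alpha_1,\alpha_1+\alpha_2,\ldots,\alpha_1+\cdots+\alpha_{k-1}=\des(\alpha)$, and the per-letter upper bounds $a_j\leq (W_{\bfc(F)})_j$ collapse to the single bound $b_n\leq j+k$, since weak monotonicity together with the $k-1$ forced strict jumps already implies $b_i\leq b_n-|\des(\alpha)\cap[i,n-1]|\leq (W_{\bfc(F)})_{n+1-i}$.

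The main obstacle I anticipate is this last collapse: verifying that the position-wise upper bounds implicit in the slide definition become redundant after reversal once one imposes $b_n\leq j+k$ together with the strict-increase conditions. This is where the assumption that every $\alpha_i$ is strictly positive is essential, ensuring that exactly $k-1$ strict jumps occur in total and that the two parametrizations of monomials are in bijection.
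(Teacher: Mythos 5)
Your proof is correct, and for the polynomial identity you take exactly the route the paper offers as its "alternatively" clause: $\decreasing(F)$ is a singleton for a linear tree, so $\forestpoly_F$ is a single slide polynomial, and one identifies it with $\fund_\alpha(x_1,\ldots,x_{k+j})$. The paper's primary route is even shorter and worth noting: for a chain, Definition~\ref{def:forest} is a single chain of weak/strict inequalities whose only binding upper bound is $\rho_F$ of the terminal node (namely $k+j$), which is verbatim the defining sum of $\fund_\alpha(x_1,\ldots,x_{k+j})$, so the compatible-sequence reversal and bound-collapse bookkeeping you carefully checked can be avoided. For the equivalence the paper says only "readily checked"; your argument via $\lsupp(F)=\supp(\bfc(F))$ plus the two contrapositive cases (multiple components, or a branching node $u$ breaking the interval at $\can_F(u)$) is a clean and correct way to supply that omitted detail.
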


\begin{proof}
The equivalent conditions for $F$ being an indexed linear tree are readily checked. To obtain~\eqref{eq:forest_contain_fundamentals}, notice that the weak and strict inequalities along left and right edges of a linear tree ensure that Definition~\ref{def:forest} is exactly the definition of a fundamental quasisymmetric polynomial. Alternatively, one can notice that the unique slide polynomial occurring in \eqref{eq:forests_into_slides} is indeed   the desired fundamental quasisymmetric polynomial.
\end{proof}

\begin{example}\label{ex:linear_trees_are_fundamental}
  Consider the three indexed linear trees in Figure~\ref{fig:linear} from left to right. The preceding discussion implies that the corresponding forest polynomials equal $\fund_{(2,2,1)}(x_1,x_2,x_3,x_4)$, $\fund_{(2)}(x_1,x_2)$, and $\fund_{(1,1)}(x_1,x_2,x_3)$ respectively.
  The last two could be written as $h_2(x_1,x_2)$ and $e_3(x_1,x_2,x_3)$ respectively, where the $h$ and $e$ denote the \emph{complete homogeneous symmetric polynomial} and \emph{elementary symmetric polynomial}.
\end{example}

\begin{figure}[]
\includegraphics[width=0.9\linewidth]{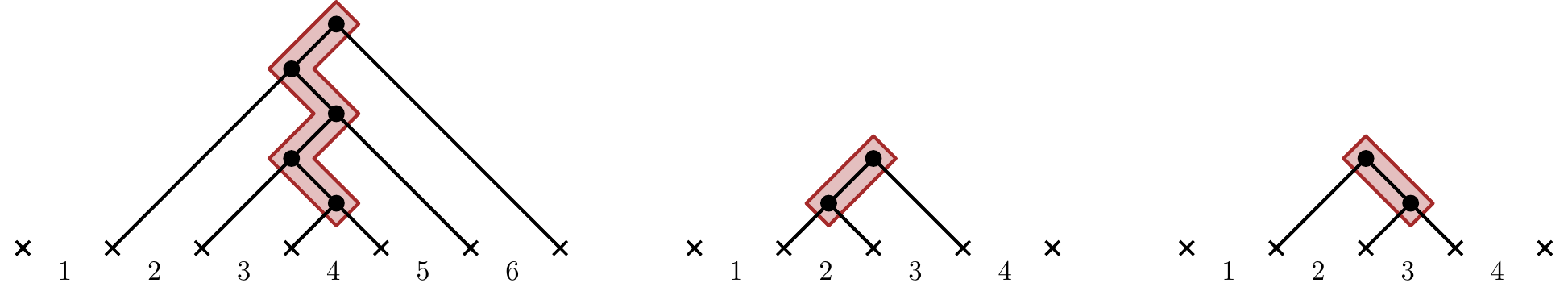}
\caption{Indexed forests with left support an interval}
\label{fig:linear}
\end{figure}

\subsection{A recurrence and specializations}
\label{subsec:recurrence}
We now proceed to describe a recurrence for forest polynomials which is useful in more ways than one. It aids fast computations of the $\forestpoly_F$ and brings forth another connection to linear extensions of $F$.

We discuss two operations on indexed forests that will allow us to describe this recurrence. The first is the \emph{shift} map $\gamma$ which, given an indexed forest $F$, simply shifts $\supp(F)$ one unit to the right.
We abuse notation and define $\gamma^{i}(\forestpoly_F)=\forestpoly_{\gamma^{i}(F)}$ for $i\in \bZ$, making note that if $\supp(\gamma^{i}(F))\nsubseteq \bZ_{>0}$, then $\forestpoly_{\gamma^{i}(F)}$ equals $0$.

The second operation is \emph{trimming}; it corresponds to removing a terminal node. 
Explicitly, say $v\in \terminal(F)$ lies in an indexed tree $T$ with support $[a,b]$ that is a constituent of $F$.
We obtain $\trim{v}(F)$ from $F$ by replacing $T$ with $T'= T\setminus \{v\}$ supported on $[a,b-1]$, and leaving the other trees in $F$ unaltered. 
In the special case $a=b$, implying that $T$ had one node to begin with, we simply delete this node from our forest.
As an example, observe that the indexed trees on the center and the right in Figure~\ref{fig:linear} are precisely those obtained by trimming terminal nodes in $T_0$ from Figure~\ref{fig:forest_0201}.

\begin{lemma}
\label{lem:leaf_indexed}
Given $F\in \IF_+$ we have
\begin{align}
\label{eq:leaf_indexed}
\forestpoly_F=\forestpoly_{\gamma^{-1}(F)}+\sum_{v\in \terminal(F)}x_{\rho_F(v)}\,\forestpoly_{\trim{v}(F)}.
\end{align}
\end{lemma}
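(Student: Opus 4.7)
The plan is to establish the identity by a direct monomial-level bijection arising from Definition~\ref{def:forest}. Every monomial of $\forestpoly_F$ comes from a labeling $\kappa:\internal(F)\to\bZ_{\geq 1}$ bounded above by $\rho_F$ with the weak/strict monotonicity along left/right edges. I would split these labelings into two classes: the ``strict'' class, where $\kappa(u)\leq \rho_F(u)-1$ for every $u\in\internal(F)$, and the ``equality'' class, where $\kappa(u)=\rho_F(u)$ for at least one $u$. The strict class is handled at once: since $\rho_{\gamma^{-1}(F)}=\rho_F-1$ and the monotonicity is purely structural, these are precisely the labelings enumerated by $\forestpoly_{\gamma^{-1}(F)}$.

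For the equality class I would construct a canonical bijection with pairs $(v,\kappa')$ where $v\in\terminal(F)$ and $\kappa'$ is a valid labeling of $\trim{v}(F)$, in a monomial-preserving way up to the factor $x_{\rho_F(v)}$. The assignment $\kappa\mapsto v$ rests on two elementary propagation facts deduced directly from the definition of $\rho_F$. First, if $\kappa(u)=\rho_F(u)$ and $u$ has a left child $w$, then $\rho_F(u)=\rho_F(w)$ combined with $\kappa(u)\leq\kappa(w)\leq\rho_F(w)$ forces $\kappa(w)=\rho_F(w)$. Second, if $\kappa(u)=\rho_F(u)$, $u$ has no left child but has a right child $w$, then $\rho_F(w)=\rho_F(u)+1$ and the strict inequality $\kappa(u)<\kappa(w)\leq\rho_F(w)$ again forces $\kappa(w)=\rho_F(w)$. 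Iterating these two observations from any node with equality eventually lands at a terminal node at which the equality holds. Among all such terminal nodes I select the one with maximum $\rho_F$-value; this $v$ is unique because distinct terminals have distinct canonical labels, and for a terminal $v$ one has $\rho_F(v)=\can_F(v)$.

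The remaining task is to verify that $\kappa':=\kappa|_{\internal(F)\setminus\{v\}}$ is a valid labeling of $\trim{v}(F)$. Monotonicity is immediate since removing a terminal node only loosens constraints at its parent. The delicate point is the bound $\kappa'(u)\leq\rho_{\trim{v}(F)}(u)$, which demands a comparison of $\rho_{\trim{v}(F)}$ with $\rho_F$. The only nodes $u$ for which the $\rho$-value strictly drops after trimming are those in the same indexed tree as $v$ whose leftmost descendant lies strictly to the right of $v$ in inorder, equivalently those with $\rho_F(u)>\rho_F(v)$. But by the maximality of $\rho_F(v)$ among nodes with equality, any such $u$ must satisfy $\kappa(u)<\rho_F(u)$, giving $\kappa(u)\leq\rho_F(u)-1=\rho_{\trim{v}(F)}(u)$. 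The inverse direction is straightforward: given a pair $(v,\kappa')$ one extends by $\kappa(v):=\rho_F(v)$ and checks directly that $\kappa$ is a valid labeling of $F$ and that $v$ is the chosen terminal.

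The main technical obstacle is the $\rho$-value comparison under trimming, where one must carefully handle the subcase in which the leftmost descendant of $u$ in $F$ is $v$ itself (so that in $\trim{v}(F)$ the new leftmost descendant is $v$'s parent, whose canonical label in $\trim{v}(F)$ coincides with the old canonical label of $v$, leaving $\rho$ unchanged). Once this case analysis is in hand, the propagation chain and the inverse verification are routine, and the monomial identity $\prod_{u}x_{\kappa(u)}=x_{\rho_F(v)}\prod_{u\neq v}x_{\kappa'(u)}$ reads off directly.
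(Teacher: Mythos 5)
Your proposal follows the same strategy as the paper's proof: split labelings by whether the bound is tight at some node, and index the tight class by the terminal node of maximal $\rho_F$-value at which equality holds; the paper formulates this directly as ``equality at $v_i$ and strict inequality at every terminal $v_j$ with $j>i$,'' which is the same partition. Your propagation lemmas and the handling of the sub-case where $v$ is the leftmost descendant of $u$ are exactly the right ingredients and are more explicit than what the paper records.

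There is, however, one slip worth flagging. You assert that the nodes whose $\rho$-value strictly drops under $\trim{v}$ are ``those in the same indexed tree as $v$ whose leftmost descendant lies strictly to the right of $v$ in inorder, \emph{equivalently} those with $\rho_F(u)>\rho_F(v)$.'' These two sets are not equal: any node $u$ in a tree lying entirely to the right of $v$'s tree also has $\rho_F(u)>\rho_F(v)$, yet it is not in $v$'s tree. Which of the two sets actually describes the drop depends on how you read the definition of $\trim{v}$. If, as the text literally says, the other trees are ``unaltered,'' then only nodes in $v$'s tree can drop, and in that case your inverse map fails to be well-defined: given $(v,\kappa')$ with a terminal $v'$ in a later tree satisfying $\kappa'(v')=\rho_F(v')$, the reconstructed $\kappa$ has $v'$ beating $v$ as the maximal-$\rho_F$ terminal with equality, so forward-then-backward is not the identity (one can even check on $F$ consisting of the two singleton trees supported on $\{1\}$ and $\{3\}$ that the right-hand side of \eqref{eq:leaf_indexed} then differs from $\forestpoly_F$). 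The identity holds precisely when trimming is understood to also shift every tree to the right of $v$'s tree one unit to the left, in which case the correct description of the drop set is $\{u\suchthat \rho_F(u)>\rho_F(v)\}$ (your second characterization), but not the ``same tree'' one. So the part you dismissed as ``straightforward''---verifying that $v$ is recovered as the chosen terminal in the inverse direction---is exactly the point where this distinction bites, and it deserves the same care as the forward direction.
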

\begin{proof}
Consider labelings $\kappa$ that contribute monomials to $\forestpoly_F$ as per Definition~\ref{def:forest}.
Say the terminal nodes in $F$ are $v_1$ through $v_k$ from left to right for some $k\geq 1$.
The labelings $\kappa$ satisfying $\kappa(v_i)< \rho_F(v_i)$ for all $1\leq i\leq k$ contribute to $\forestpoly_{\gamma^{-1}(F)}$
Indeed, decrementing all entries in $\rho_F$ by $1$ is the same as shifting the support of the indexed forest one unit to the left.
So let us consider the contributions of labelings wherein a terminal node has the same label as its $\rho_F$-value.

Fix an $i\in [k]$ and consider labelings $\kappa$ such that $\kappa(v_i)=\rho_F(v_i)$ but $\kappa(v_j)<\rho_F(v_j)$ for all $j>i$. The contribution of such labelings is then seen to equal $x_{\rho_F(v_i)}\forestpoly_{\trim{v_i}(F)}$.
Summing these contributions over all $1\leq i\leq k$ implies the claim.
\end{proof}
One can rewrite this recurrence by indexing forest polynomials by  $\bN$-vectors, and trade clarity for brevity.
In the case of indexed linear trees, in view of Proposition~\ref{prop:fundamentals_as_forests}, the recurrence  in Lemma~\ref{lem:leaf_indexed} is then an easy one for fundamental quasisymmetric polynomials \cite[Proposition 2.1]{ABB04} which plays a useful role in loc. cit..

\begin{example}
\label{ex:forest_0201_again}
For $T_0$ from Figure~\ref{fig:forest_0201},  Lemma~\ref{lem:leaf_indexed} says
\[
\forestpoly_{T_0}=\forestpoly_{\gamma^{-1}(T_0)}+x_4\forestpoly_{T_2}+x_2\forestpoly_{T_3},
\]
where $T_2,T_3$ are the trees on the center and right of Figure~\ref{fig:linear}, with forest polynomials computed to be $x_1^2+x_2^2+x_1x_2$ and $x_1x_2+x_1x_3+x_2x_3$ respectively. Since $\forestpoly_{\gamma^{-1}(T_0)}=x_1^2x_2+x_1^2x_3$, we retrieve the expansion for $\forestpoly_{T_0}$ in Example~\ref{exa:pT0}.
\end{example}

Lemma~\ref{lem:leaf_indexed} gives a nice interpretation for the sum of coefficients of $\forestpoly_F$, i.e. the specialization $\forestpoly_F|_{x_i=1 \forall i\geq 1}$.
Given the multiplicative property in~\eqref{eq:multiplicative}, it suffices to give the answer in the case of an indexed tree $T$.
We extend an earlier definition of a decreasing labeling mildly. Given $n\geq |T|$, let $\decreasing(T,n)$ denote the set of decreasing labelings of $T$ with distinct integers drawn from $[n]$.

\begin{proposition}
\label{prop:all_1s_tree}
Let $T$ be an indexed tree and let $M\coloneqq\max(\supp(T))$.
\begin{equation}
\label{eq:all_1s_tree}
\forestpoly_T(1,\dots,1)=|\decreasing(T,M)|=\binom{M}{|T|}\decreasing(T).
\end{equation}
\end{proposition}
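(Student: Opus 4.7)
My plan is to establish the two equalities separately: the second is a direct counting argument, and the first follows by induction on $M$ using the recurrence in Lemma~\ref{lem:leaf_indexed}.

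For the second equality, observe that every element of $\decreasing(T,M)$ is uniquely determined by specifying (a) the set $S \subseteq [M]$ of labels it uses, which has size $|T|$, and (b) the standardization of the labeling, obtained by composing with the unique order-preserving bijection $S \to \{1, \dots, |T|\}$, which produces an element of $\decreasing(T)$. Conversely every pair $(S, \ell)$ with $|S|=|T|$ and $\ell\in\decreasing(T)$ yields an element of $\decreasing(T,M)$, so $|\decreasing(T,M)| = \binom{M}{|T|}|\decreasing(T)|$.

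For the first equality I induct on $M$. The base case $M=1$ forces $|T|=1$, where both sides evaluate to $1$. For the inductive step I set $x_i=1$ for all $i$ in the identity of Lemma~\ref{lem:leaf_indexed}:
\begin{align*}
\forestpoly_T(1,1,\dots) = \forestpoly_{\gamma^{-1}(T)}(1,1,\dots) + \sum_{v \in \terminal(T)} \forestpoly_{\trim{v}(T)}(1,1,\dots).
\end{align*}
Both $\gamma^{-1}(T)$ (whenever it belongs to $\IF_+$) and every $\trim{v}(T)$ have maximum of support equal to $M-1$, so by the inductive hypothesis and the second equality the right hand side becomes
\begin{align*}
\binom{M-1}{|T|}|\decreasing(T)| + \binom{M-1}{|T|-1}\sum_{v \in \terminal(T)}|\decreasing(\trim{v}(T))|.
\end{align*}
The boundary case $|T|=M$, where $\gamma^{-1}(T)\notin \IF_+$ and hence $\forestpoly_{\gamma^{-1}(T)}=0$, is consistent with this since $\binom{M-1}{|T|}=0$ there. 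To finish, I invoke the standard recursion $|\decreasing(T)|=\sum_{v\in\terminal(T)}|\decreasing(\trim{v}(T))|$, which is valid because the label $1$ in any decreasing labeling is forced to lie on a terminal node; removing that node and shifting all remaining labels down by $1$ yields an element of $\decreasing(\trim{v}(T))$. Combined with Pascal's identity $\binom{M}{|T|}=\binom{M-1}{|T|}+\binom{M-1}{|T|-1}$, this produces $\binom{M}{|T|}|\decreasing(T)|$ and closes the induction.

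I expect the only mildly delicate point to be confirming the boundary case $|T|=M$ as above---everything else reduces to matching two recursions of the same shape via Pascal's identity.
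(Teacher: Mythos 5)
Your proof is correct and takes essentially the same approach as the paper: both arguments run the same induction through the recurrence of Lemma~\ref{lem:leaf_indexed} specialized at $x_i=1$, and both exploit the observation that trimming/removing the minimal label gives a matching recursion on decreasing labelings. The only cosmetic difference is bookkeeping: the paper shows directly that $|\decreasing(T,M)|$ obeys the same recurrence as $\forestpoly_T(1,\dots,1)$ (treating the second equality as a routine standardization fact), whereas you route through $\binom{M}{|T|}|\decreasing(T)|$, which costs you Pascal's identity and the recursion $|\decreasing(T)|=\sum_{v\in\terminal(T)}|\decreasing(\trim{v}(T))|$ but buys you an explicit proof of the second equality. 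Both are valid; the content is the same.
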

\begin{proof}
We describe a recurrence for the right-hand side in~\eqref{eq:all_1s_tree} which turns out to be equivalent to~\eqref{eq:leaf_indexed} when specializing $x_i=1$ for all $i\geq 1$.

Any decreasing labeling of $T$ using distinct integers from $[M]$ either uses $1$ or it does not. If the latter, then we are counting decreasing labelings of $T$ with labels drawn from $\{2,\dots,M\}$.
If the former, then the label $1$ must appear at one of the terminal nodes, and we are again left to count decreasing labelings of the remaining tree (trimming a tree leaves a tree) with labels from $\{2,\dots,M\}$.
To conclude, note that decreasing labelings with labels drawn from $\{2,\dots,M\}$ are equinumerous with decreasing labelings with labels drawn from $[M-1]$.
\end{proof}

In the case $T$ is an indexed tree with support $[n-1]$, one can track the bijection between linear extensions of $T$ and monomials in $\forestpoly_T$ at a more granular level.
For $\ell \in \decreasing(T)$, we abuse notation and refer to the permutation in $S_{n-1}$ obtained by reading the labels in inorder by $\ell$.  
\begin{corollary}
\label{cor:sum_over_codes}
	For $T$ an indexed tree with support $[n-1]$ we have
	\[
	\forestpoly_T=\sum_{\ell\in \decreasing(T)}\alpx_{\lcode(\ell^{-1})+1^{n-1}}
	\]
	where $\widetilde{\bfc}\coloneqq \lcode(\ell^{-1})+1^{n-1}$ is obtained by adding $1$ to every entry in the code for $\ell^{-1}$ and $\alpx_{\widetilde{\bfc}}\coloneqq \prod_{1\leq i\leq n-1}x_{\widetilde{c}_i}$.
\end{corollary}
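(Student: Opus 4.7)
The plan is to proceed by induction on $n$, using Lemma~\ref{lem:leaf_indexed} as the recursive engine. Since $\supp(T)=[n-1]$, the shifted tree $\gamma^{-1}(T)$ has support $[0,n-2]\not\subseteq \bZ_{\geq 1}$, so $\forestpoly_{\gamma^{-1}(T)}=0$, and Lemma~\ref{lem:leaf_indexed} simplifies to
\[
\forestpoly_T \;=\; \sum_{v\in\terminal(T)} x_{\rho_T(v)}\,\forestpoly_{\trim{v}(T)}.
\]
Each summand involves an indexed tree $\trim{v}(T)$ of support $[n-2]$, so the inductive hypothesis applies. The base case $n=1$ (empty tree) is trivial.

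To match the desired sum, I would partition $\decreasing(T)$ according to which terminal node $v$ carries the minimum label $1$. Subtracting $1$ from every remaining label produces a natural bijection $\ell\leftrightarrow\ell'$ between $\{\ell\in\decreasing(T):\ell(v)=1\}$ and $\decreasing(\trim{v}(T))$. Writing $k$ for the canonical label of $v$ (so $\rho_T(v)=k$, as $v$ is terminal), the core step is to establish the code identity
\[
\lcode(\ell^{-1}) \;=\; \bigl(k-1,\ \lcode((\ell')^{-1})_1,\ \ldots,\ \lcode((\ell')^{-1})_{n-2}\bigr),
\]
from which the monomial factorizes as $\alpx_{\lcode(\ell^{-1})+1^{n-1}}=x_{\rho_T(v)}\cdot\alpx_{\lcode((\ell')^{-1})+1^{n-2}}$. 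Summing over $\ell$ and then over $v$, and applying the induction hypothesis to $\forestpoly_{\trim{v}(T)}$, reproduces the reduced recurrence for $\forestpoly_T$.

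The first coordinate is immediate: since $\ell^{-1}(1)=k$ and the remaining $\ell^{-1}(j)$ for $j\geq 2$ form a permutation of $[n-1]\setminus\{k\}$, precisely $k-1$ of them lie below $k$, giving $\lcode(\ell^{-1})_1=k-1$. The main technical step---and the only place the argument is not mechanical---is to verify $\lcode(\ell^{-1})_i=\lcode((\ell')^{-1})_{i-1}$ for $i\geq 2$. Under the bijection, canonical positions of $\trim{v}(T)$ embed back into those of $T$ via the shift $p'\mapsto p'+[p'\geq k]$ (reinserting the slot $k$), so $\ell^{-1}(i)=(\ell')^{-1}(i-1)+[(\ell')^{-1}(i-1)\geq k]$ for $i\geq 2$. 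The code equality then reduces to the claim that for $2\leq i<j$, $\ell^{-1}(j)<\ell^{-1}(i)$ if and only if $(\ell')^{-1}(j-1)<(\ell')^{-1}(i-1)$. A short four-case analysis---separating whether $(\ell')^{-1}(i-1)$ and $(\ell')^{-1}(j-1)$ lie below or above $k$---shows that the two shifts cancel out in the comparison in every case. This case check is the only real obstacle; once it is in hand, the inductive step assembles directly from the recurrence.
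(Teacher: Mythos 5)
Your proof is correct and follows the same route as the paper, which simply asserts that the right-hand side obeys the recurrence of Lemma~\ref{lem:leaf_indexed} (with the $\gamma^{-1}$ term vanishing because $0\in\supp(\gamma^{-1}(T))$) and leaves the bookkeeping to the reader. One minor simplification: your ``four-case analysis'' at the end is unnecessary, since the reinsertion map $x\mapsto x+[x\geq k]$ on $[n-2]$ is strictly increasing, so it preserves all order comparisons automatically, giving $\lcode(\ell^{-1})_i=\lcode((\ell')^{-1})_{i-1}$ for $i\geq 2$ without any case split.
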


\begin{proof}
The right-hand side obeys the recurrence ~\eqref{eq:leaf_indexed}: note that in this case $\gamma^{-1}(T)$ has $0$ in its support and thus the associated forest polynomial vanishes.
\end{proof}

\begin{example}
Consider the indexed tree $\gamma^{-1}(T_0)$ obtained from $T_0$ in Figure~\ref{fig:forest_0201}. 
Then its support is $[3]$. The two linear extensions on the right in Figure~\ref{fig:bf_into_bs_example} are $231$ and $132$. Their inverse have Lehmer codes $(2,0,0)$ and $(0,1,0)$ respectively. Corollary~\ref{cor:sum_over_codes} then says that
\[
\forestpoly_T=x_1^2x_3+x_1^2x_2.
\]
\end{example}

\section{Forest polynomials and $\qsym_n^+$}
\label{sec:forest_and_qsym}
Treat $F$ as the Hasse diagram of a poset on $\internal(F)$ like mentioned before.
We consider decompositions $\internal(F)=L\sqcup U$ where $L$ is a lower order ideal and $U$ is the upper order ideal determined by $\internal(F)\setminus L$.
Given such a decomposition, we say that a labeling $\kappa:\internal(F)\to \bZ_{>0}$ is \emph{$L$-compatible} if the following conditions hold:
\begin{enumerate}
\item $\kappa|_{U}$ obeys the conditions of being an ordinary $P$-partition, i.e. the labels increase weakly going down  left edges and increase strictly going down right edges.
\item the labels of $\kappa|_L$ decrease strictly going down left edges and decrease weakly going down right edges, and $\kappa(v)>\rho_F(v)$ for every $v\in L$.
\end{enumerate}
In short, $\kappa$ is $L$-compatible if it obeys $P$-partition conditions on $L$ and flagged {\em dual} $P$-partition conditions on $U$. 

For a fixed indexed forest $F$, let $\compl(F)$ denote the set of all ordered pairs $(\kappa,L)$ where $\kappa$ is $L$-compatible and $L$ is a lower order ideal in $F$.
Given $y=(\kappa,L)\in \compl(F)$, define the weight $\wt(y)$ to equal the monomial $\alpx_{\kappa}\coloneqq \displaystyle\prod_{v\in \internal(F)} x_{\kappa(v)}$.
Its \emph{sign} $\mathrm{sign}(y)$ equals $(-1)^{|L|}$.
Since the weight only depends on $\kappa$, we may occasionally write $\wt(\kappa)$ instead of $\wt(y)$.

\begin{example}
Figure~\ref{fig:L_compatible_1} shows an indexed forest $F$ with two different $(\kappa,L)\in \compl(F)$. The highlighted region contains nodes from $L$.  The weight in both cases is $\textcolor{blue}{x_3^2x_8x_{10}}\,\textcolor{red}{x_4x_5x_7x_{10}^2}$ but the signs are opposite, as the $L$ on the right is obtained by removing an element from the $L$ on the left.
\end{example}

\begin{figure}[!ht]
\includegraphics[width=0.75\linewidth]{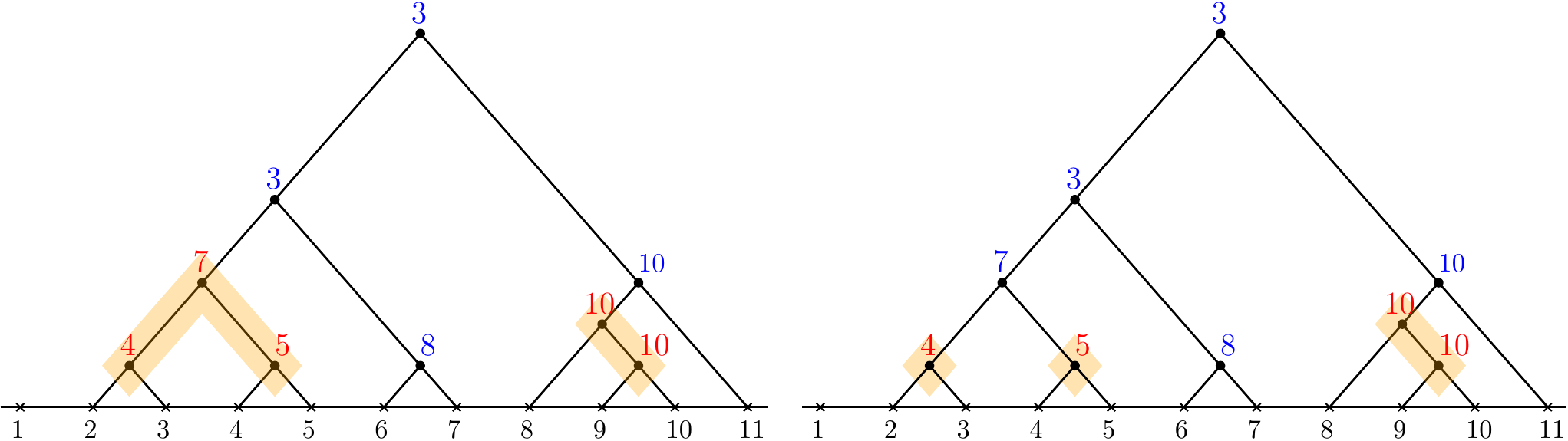}
\caption{Two instances of $L$-compatible labelings with the same underlying indexed forest but differing $L$'s. The weights are equal but the signs are opposite.}
\label{fig:L_compatible_1}
\end{figure}

\begin{theorem}
\label{th:forest_into_dual_forest}
Given an indexed forest $F$, we have
\[
\forestpoly_F=\sum_{y\in \compl(F)} \wt(y)\mathrm{sign}(y)=\sum_{\text{lower ideals } L} (-1)^{|L|}\sum_{\text{$L$-compatible }\kappa}\alpx_{\kappa}.
\]
\end{theorem}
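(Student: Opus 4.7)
The plan is to establish the identity by constructing a sign-reversing, weight-preserving involution $\phi$ on $\compl(F)$ whose fixed points are precisely the pairs $(\kappa,\emptyset)$ in which $\kappa$ satisfies the constraints of Definition~\ref{def:forest}. Summing over the fixed points then recovers $\forestpoly_F$ via~\eqref{eq:backforest}, while all other contributions cancel in pairs.

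To define $\phi$, I would scan the internal nodes of $F$ in a fixed canonical order (e.g., the inorder induced by the canonical labeling) and select the first \emph{togglable} vertex $v$: either a maximal element of $L$ (all ancestors of $v$ in $U$), or a minimal element of $U$ with $\kappa(v)>\rho_F(v)$ (all descendants of $v$ already in $L$). The map $\phi$ then swaps $v$'s membership between $L$ and $U$, flipping $(-1)^{|L|}$ while preserving $\wt(\kappa)$. Togglability in this sense guarantees that $L\triangle\{v\}$ remains a lower order ideal. A pair is a fixed point exactly when no togglable $v$ exists, which unwinds to $L=\emptyset$ and $\kappa(v)\le \rho_F(v)$ for every $v$, matching the labelings of Definition~\ref{def:forest}.

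The main obstacle is the compatibility check accompanying the toggle. When $v$ passes from $U$ to $L$, every edge joining $v$ to an already-in-$L$ descendant transforms from a constraint-free interface edge into an $L$-$L$ edge requiring the dual $P$-partition inequality; symmetrically, moving $v$ from $L$ to $U$ promotes interface edges to $U$-$U$ edges carrying $P$-partition inequalities. A priori there is no reason for $\kappa$ to respect these newly active inequalities. I expect the resolution to require strengthening the definition of togglability so that the toggle automatically preserves $L$-compatibility, and augmenting $\phi$ with a secondary move -- such as swapping $\kappa$-values across a boundary edge -- to handle the remaining configurations.

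An alternative, potentially cleaner route is to verify the identity coefficient-by-coefficient. Fix $\alpha\colon \internal(F)\to\bZ_{\geq 1}$; since on each edge $\alpha$ satisfies exactly one of the $P$-partition or dual $P$-partition inequality, the set of lower ideals $L$ for which $\alpha$ is $L$-compatible takes the form $\{L : M(\alpha)\subseteq L\subseteq R(\alpha)\setminus N(\alpha)\}$, where $R(\alpha)=\{v : \alpha(v)>\rho_F(v)\}$ and $M(\alpha),N(\alpha)$ record the vertices forced into $L$ or $U$ by the descent pattern of $\alpha$. One then shows by a short propagation lemma -- any $P$-partition violating the $\rho_F$-bound at some vertex does so at some terminal descendant, proved by tracing down the tree using the weak/strict monotonicity of $\alpha$ on left/right edges -- that the signed count over this interval telescopes to $[\alpha$ admissible for $\forestpoly_F]$.
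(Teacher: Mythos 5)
Your first approach is the same as the paper's, but you have not actually carried it out; you have only pinpointed where the work lies. You correctly observe that a naive toggle -- take a maximal element of $L$, or a minimal element of $U$ with $\kappa(v)>\rho_F(v)$, and flip it -- can destroy $L$-compatibility, because a boundary edge that was constraint-free becomes an $L$-$L$ or $U$-$U$ edge carrying a new inequality. And you correctly guess that the fix is to strengthen ``togglable.'' That strengthening is exactly the paper's Definition~\ref{def:exchangeable} of an \emph{exchangeable} node: a maximal node of $L$ is exchangeable only if, in addition, the inequality $\kappa(v)\geq\kappa(\mathrm{parent}(v))$ (resp.\ $>$ for a right child) already holds so that moving $v$ into $U$ makes the promoted edge a legal $U$-edge; dually for a minimal node of $U$. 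No ``secondary move'' of the kind you speculate about (swapping $\kappa$-values across the boundary) is needed -- your hedging there suggests you hadn't actually pinned down the right condition.

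What your write-up is missing entirely is the substantive half of the argument: that with this strengthened notion, an exchangeable node \emph{always exists} whenever $(\kappa,L)$ is not of the fixed-point form $(\kappa,\emptyset)$ with $\kappa\leq\rho_F$. This is the paper's Lemma~\ref{lem:good_does_not_exist}, and it is where all the real work is -- a nontrivial case analysis on the topmost vertex of $U$ with a child in $L$, split according to which children lie in $L$ and a comparison of $\kappa$-values along paths to terminal nodes. You also need (and do not check) that the first exchangeable node in inorder is stable under the toggle, so that $\phi$ is actually an involution. Without these pieces your ``plan'' is exactly that; the proposal identifies the right scaffolding but does not build the proof.

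On your alternative, coefficient-by-coefficient route: this is a genuinely different idea and it can be made to work, but not via the ``short propagation lemma'' you cite (that lemma is essentially Proposition~\ref{prop:betaP_is_betaF} and addresses a different issue). What you actually need is this: for a fixed $\kappa$, the set of $L$ for which $\kappa$ is $L$-compatible is parametrized by lower ideals of the induced poset on $Q=\internal(F)\setminus(M\cup N)$, where $M$ is the downward closure of the children of ``dual'' edges and $N$ is the upward closure of parents of ``$P$-partition'' edges together with the vertices $v$ with $\kappa(v)\leq\rho_F(v)$; and then one must prove that $Q$ is always an \emph{antichain} in the forest (two $Q$-elements can never be comparable, because an intermediate cover edge would force one of them into $M$ or $N$). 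The signed sum over lower ideals of an antichain is $0$ unless the antichain is empty, and one then checks that $Q=\emptyset$ with $M\cap N=\emptyset$ forces $M=\emptyset$ and $\kappa\leq\rho_F$ with all edges of $P$-partition type, recovering the indicator for $\forestpoly_F$. This antichain lemma is the crux of the alternative route and is absent from your sketch; as written, the claim that the sum ``telescopes'' is unsupported -- over a chain of length $2$, for instance, the alternating sum over lower ideals is $1$, not $0$, so without the antichain structure the cancellation simply fails.
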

We prove this in Appendix~\ref{app:sign_reversing} by a partial sign-reversing weight-preserving involution on $\compl(F)$.

\subsection{Reduction modulo $\qsym_n^+$}
\label{subsec:red_mod_qsymn}

Note that Theorem~\ref{th:forest_into_dual_forest} expresses the forest polynomial as a signed sum of formal power series.
Looking over the proof should convince the reader that we could have restricted the image of our labelings to $[n]$ for a positive integer $n$ rather than $\bZ_{>0}$.
This amounts to setting $x_i=0$ for all $i>n$ in Theorem~\ref{th:forest_into_dual_forest}.
Since $\forestpoly_F$ only involves the variables $x_1$ through $x_{|\lsupp(F)|}$, for the rest of this section let us pick $n$ to be such that $\lsupp(F)\subseteq [n]$ and restrict our $L$-compatible labeling $\kappa$ to take values in $[n]$.

Let us define the \emph{dual forest polynomial} $\widetilde{\forestpoly}_F$ as:
\begin{align}
\label{eq:dual_forest}
\widetilde{\forestpoly}_F=\sum_{\text{$\internal(F)$-compatible } \kappa}\alpx_{\kappa}.
\end{align}
We then have the following corollary of Theorem~\ref{th:forest_into_dual_forest} that renders reduction modulo $\qsym_n^+$ into a routine task.
Recall the ABB decomposition
\[
\bQ[\alpxn{n}]=\bQ\{\alpx^{\bfc}\suchthat \bfc \in \abb{n}\}\oplus \qsym_n^+.
\]
\begin{corollary}
\label{cor:forest_and_qsym}
For $F\in \IF$ satisfying $\lsupp(F)\subseteq [n]$,  we have
\[
\forestpoly_F= (-1)^{|F|}\widetilde{\forestpoly}_F \mod \qsym_n^+.
\]
As a consequence,
\[\begin{cases}
\forestpoly_F\in\qsym_n^+\text{ if }\supp(F)\nsubseteq [n-1],\\
\forestpoly_F\in \bQ\{\alpx^{\bfc}\suchthat \bfc \in \abb{n}\}\text{ otherwise}.
\end{cases}\]
\end{corollary}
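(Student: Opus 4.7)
The plan is to apply Theorem~\ref{th:forest_into_dual_forest} term by term. Writing
\[
\forestpoly_F \;=\; \sum_{L}\, (-1)^{|L|}\,\Phi_L,\qquad \Phi_L \;=\; \sum_{\kappa\text{ is }L\text{-compatible}}\alpx_\kappa,
\]
indexed over lower order ideals $L$ of $\internal(F)$, the crucial observation is that the conditions defining $L$-compatibility decouple between $L$ and $U=\internal(F)\setminus L$: inside $U$ one has the ordinary $P$-partition inequalities (weak down left edges, strict down right), inside $L$ the flagged dual inequalities, and no constraint is imposed on edges crossing from $L$ to $U$. Hence $\Phi_L$ factors as $\Psi_L\cdot K_U$, where $K_U$ is the $P$-partition generating polynomial of the induced subposet on $U$, read in the alphabet $\alpxn{n}$. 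By Stanley's theory of $P$-partitions this $K_U$ is quasisymmetric in $\alpxn{n}$, and whenever $U\neq\varnothing$ (equivalently $L\neq\internal(F)$) it is either zero or of positive degree $|U|$, so $K_U$---and hence $\Phi_L$---lies in $\qsym_n^+$. Only the term $L=\internal(F)$ survives modulo $\qsym_n^+$, yielding the first identity $\forestpoly_F\equiv(-1)^{|F|}\widetilde{\forestpoly}_F\pmod{\qsym_n^+}$.

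For the case analysis, the clause $\supp(F)\subseteq[n-1]$ is immediate from Proposition~\ref{prop:forests_as_basis}, which identifies $\bQ\{\forestpoly_F\suchthat\supp(F)\subseteq[n-1]\}$ with $\bQ\{\alpx^{\bfc}\suchthat\bfc\in\abb{n}\}$, placing $\forestpoly_F$ in the latter with no further work. When $\supp(F)\nsubseteq[n-1]$, the first identity reduces the desired $\forestpoly_F\in\qsym_n^+$ to showing $\widetilde{\forestpoly}_F=0$. The multiplicative property of $\widetilde{\forestpoly}$, which is direct from its definition since the constraints on distinct constituent trees are uncoupled, reduces this in turn to a single-tree statement: for an indexed tree $T$ with $\lsupp(T)\subseteq[n]$ and $\max\supp(T)\geq n$, the restricted dual sum $\widetilde{\forestpoly}_T$ with labels in $[n]$ vanishes. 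I plan to prove this by strong induction on $|T|$. Let $u_m$ be the inorder-last node of $T$, so $\can_T(u_m)=b=\max\supp(T)\geq n$. If $u_m$ is terminal then $\rho_T(u_m)=b\geq n$ forces $\kappa(u_m)>n$, which is impossible. Otherwise $u_m$ has a nonempty left subtree $L_m$ with $|L_m|<|T|$ and $\max\supp(L_m)=b-1\geq n-1$; the strict decrease $\kappa(u_m)>\kappa(v_1)$ at $u_m$'s left child $v_1$, propagated through $L_m$ via the strict-down-left and weak-down-right decreases, confines $\kappa|_{L_m}$ to values in $[n-1]$ while preserving all flagged dual conditions for $L_m$. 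The induction hypothesis applied to $L_m$ with the smaller bound $n-1$ (or a trivial flag check if $\lsupp(L_m)\nsubseteq[n-1]$, which immediately forbids $\kappa$) then produces no valid $\kappa|_{L_m}$, hence none for $T$.

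The main obstacle I foresee is the bookkeeping in the inductive step: one has to verify that the constraint $\kappa(u_m)\leq n$ truly propagates to $\kappa|_{L_m}\leq n-1$ at every node of $L_m$ (the weak-down-right decreases are essential here alongside the strict-down-left ones, since $L_m$ is traversed in both directions), and that the resulting restricted sum is genuinely an instance of $\widetilde{\forestpoly}_{L_m}$ in the smaller alphabet so that the induction hypothesis applies verbatim. With this verified, both assertions of the corollary follow at once from the first identity together with Proposition~\ref{prop:forests_as_basis}.
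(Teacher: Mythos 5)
Your proof is correct and follows the paper's own route for the main identity: factor the $L$-term as a product in which the $P$-partition generating polynomial $K_U$ appears, observe $K_U\in\qsym_n^+$ whenever $U\neq\varnothing$, and conclude that only $L=\internal(F)$ survives modulo $\qsym_n^+$; the clause $\supp(F)\subseteq[n-1]$ is likewise handled identically via Proposition~\ref{prop:forests_as_basis}. The only place you go beyond the paper is the vanishing of $\widetilde{\forestpoly}_F$ when $\supp(F)\nsubseteq[n-1]$, which the paper simply asserts as immediate; your trimming induction (pass from the inorder-last node $u_m$ to its left subtree $L_m$ with alphabet bound reduced to $n-1$) is sound, including the propagation $\kappa|_{L_m}\le n-1$, which does hold because both the strict-down-left and weak-down-right inequalities point downwards from the root of $L_m$. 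A slightly shorter route to the same vanishing, avoiding the changing alphabet bound, is to let $v_1,\dots,v_m$ be the inorder traversal of the offending tree $T$ and prove $\kappa(v_i)>\can_T(v_i)$ by induction on $i$: the inorder successor $v_{i+1}$ either has no left child (so $\rho_T(v_{i+1})=\can_T(v_{i+1})$ gives the bound directly) or has $v_i$ in its left subtree along one left edge then right edges (so $\kappa(v_{i+1})>\kappa(v_i)\ge\can_T(v_{i+1})$); then $\kappa(v_m)>\max\supp(T)\ge n$ is impossible in the alphabet $[n]$.
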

\begin{proof}
For any lower order ideal $L$ whose underlying set is not all of $\internal(F)$, we have that
\begin{align}
\label{eq:sum_in_question}
\sum_{\text{$L$-compatible }\kappa }\alpx_{\kappa} \in \qsym_n^+.
\end{align}
This is because the $P$-partition generating polynomial $K_{U}(x_1,\dots,x_n)$, which is a quasisymmetric polynomial, is a factor of the sum in~\eqref{eq:sum_in_question}.
Thus, modulo $\qsym_n^+$ we only need to concern ourselves with the case $L=\internal(F)$. By definition, the sum in that case is $\widetilde{\forestpoly}_F$. The first claim follows.

One half of the second statement follows because $\widetilde{\forestpoly}_F$ is identically $0$ as  there are no $\internal(F)$-compatible fillings if $\supp(F)\nsubseteq [n-1]$. The other half is by Proposition~\ref{prop:forests_as_basis}.
\end{proof}

\begin{remark}
Suppose $F$ is an indexed forest with $\supp(F)\subseteq [n-1]$.
Let $\widetilde{F}$ denote the indexed forest obtained by reflecting $F$ in the interval $[n-1]$, i.e. $i\in \supp(F)$ if and only if $n-i\in \supp(\widetilde{F})$.
It is easily checked that $
(-1)^{|F|}\widetilde{\forestpoly}_F=\forestpoly_{\widetilde{F}}(-x_n,\dots,-x_1).$
Thus, in light of Corollary~\ref{cor:forest_and_qsym}, we have  
\[
\forestpoly_F(x_1,\dots,x_n)=\forestpoly_{\widetilde{F}}(-x_n,\dots,-x_1) \mod \qsym_n^+.\]
To emphasize the parallel to Schubert polynomials, recall that for $w\in S_{n}$ we have 
\[
\schub{w}(x_1,\dots,x_n)=\schub{w_0ww_0}(-x_n,\dots,-x_1) \mod \Sym_n^+.
\]
\end{remark}

\section{A correspondence between words and forests}
\label{sec:wf}

\subsection{The correspondence $\wfcorr$ via an insertion algorithm}
\label{subsec:wf}

Let $W$ be a word in $\injwords{\Zaug}$, so $W=W_1\cdots W_r$ where the $W_i$ are pairwise distinct letters in $\Zaug$. 
We will define by induction two labelings $\psymb(W)\in \lbs(F)$ and $\qsymb(W)\in \decreasing(F)$ of the same indexed forest $F=F(W)$. 
The reader may find it helpful to refer to Figure~\ref{fig:correspondence} which illustrates the algorithm described next.

\begin{enumerate}[align=parleft,left=0pt,label=(\roman*)]
\item
If $W$ is the empty word $\epsilon$, then  $F$ is the empty tree.
 
\item
If $W=W'a$, then we assume by induction on $|W|$ that we have already constructed $\psymb(W')\in \lbs(F')$ and $\qsymb(W')\in\decreasing(F')$ for $F'=F(W')$ an indexed forest with support $S'$.  
 Let $S'$ be decomposed into maximal intervals  $I_1,\ldots,I_k$ from left to right. 
 For any $j\in [k]$, let $T_j$ be the indexed tree with support $I_j$, $u_j$ be its root, and $a_j\in\Zaug$ be the root label in $\psymb(W')$.
We define $F(W)$ by inserting a new node $u$ to $F'$. 
\begin{itemize}
\item
 Suppose that $i=\mathrm{val}(a)\notin S'$. 
 Then we can uniquely define $F$ by imposing $\can_F(u)=i$. 
 Explicitly, 
 \begin{enumerate}
  \item If $i-1\in S'$, say $i-1\in I_j$, then let the left child of $u$ be $u_j$; otherwise it has no left child.
   \item If $i+1\in S'$, say $i+1\in I_j$, then let the right child of $u$ be $u_j$; otherwise it has no right child.
    \item If both $i-1,i+1\notin S'$, then we have a new tree formed with root $u$, and we assign to it the support $\{i\}$.
  \end{enumerate}
    
   \item 
    Now suppose $i=\mathrm{val}(a)\in I_j$ for some $j\in [k]$. 
    \begin{enumerate}
      \item If $a>a_j$, then we let $u_j$ be the left child of $u$. 
    If $\max(I_j)+2\in S$ (it is thus necessarily equal to $\min(I_{j+1})$), then the right child of $u$ is $u_{j+1}$; otherwise $u$ has no right child.
    \item If $a<a_j$, then we let $u_j$ be the right child of $u$. 
    If $\min(I_j)-2\in S$ (it is thus necessarily equal to $\max(I_{j-1})$), then the left child of $u$ is $u_{j-1}$; otherwise $u$ has no left child.
    \end{enumerate}
       \end{itemize}
    \end{enumerate}
    
    This determines $F(W)\in \IF$. 
    Now $\psymb(W)$ is obtained from $\psymb(W')$ by labeling the new node $u$ by $a$, while $\qsymb(W)$ is obtained from $\qsymb(W')$ by labeling the node $u$ by $|W|$. We have thus defined $\wfcorr(W)\coloneqq (\psymb(W),\qsymb(W))$ and the common shape $F(W)$.

\begin{figure}[!ht]
\includegraphics[width=\textwidth]{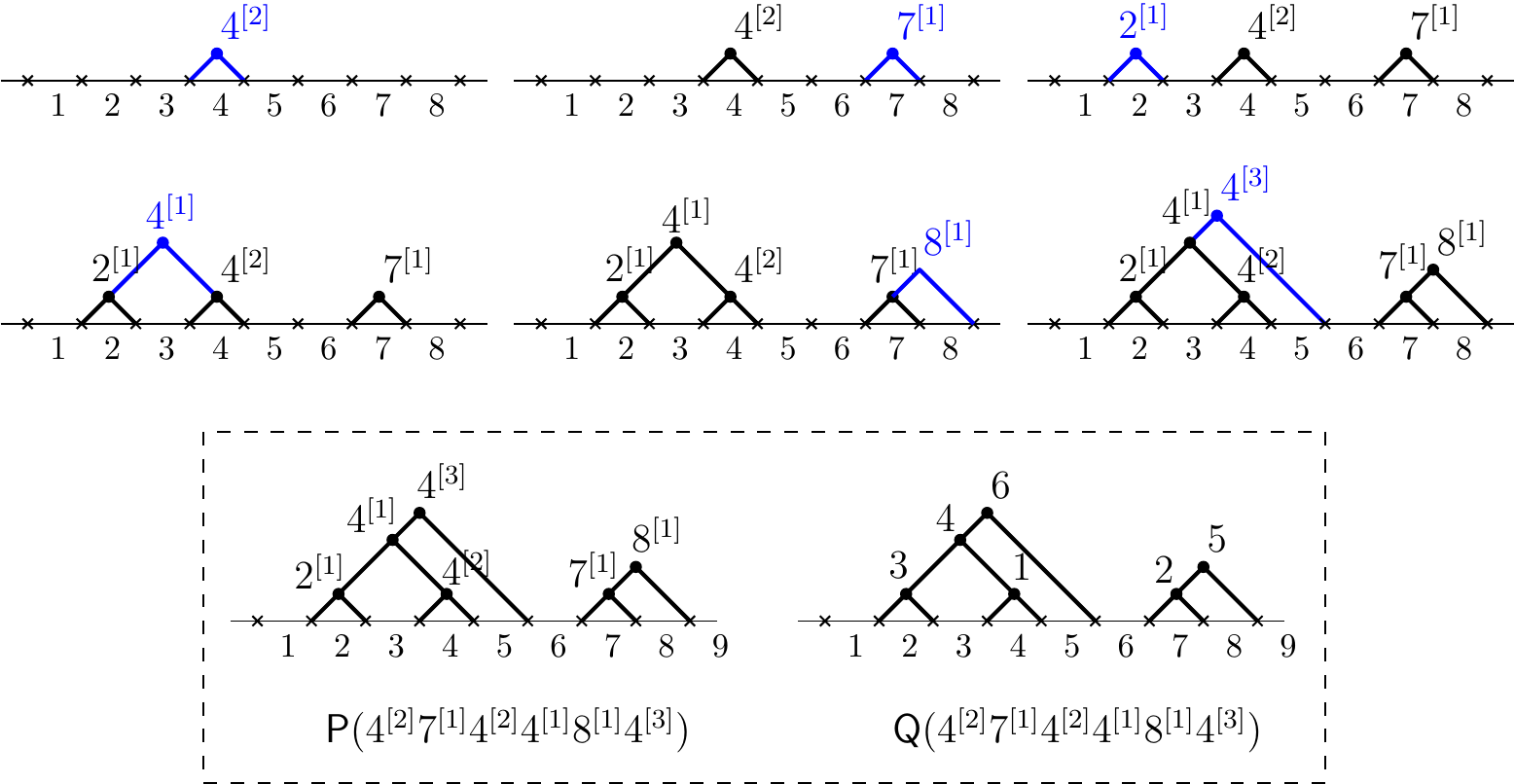}
\caption{\label{fig:correspondence} The correspondence $\wfcorr$ applied to  $W=\lf{4}{2}\lf{7}{1}\lf{4}{2}\lf{4}{1}\lf{8}{1}\lf{4}{3}$. The first two rows show the letter-by-letter creation of the $\psymb$-symbol.}
\end{figure}

\begin{theorem}
The correspondence $\wfcorr$ is one-to-one between:
\begin{enumerate}[label=\emph{(\arabic*)}] 
\item Injective words $W$ with letters in $\Zaug$, and
\item  Pairs $(\psymb,\qsymb)$ such that $\psymb\in \lbs(F)$ and $\qsymb\in \decreasing(F)$ for a common $F\in\IF$. 
\end{enumerate} 
\end{theorem}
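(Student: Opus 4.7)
The plan is to proceed by induction on $|W|$, establishing simultaneously at each step that the insertion produces a valid pair $\wfcorr(W) \in \lbs(F(W)) \times \decreasing(F(W))$ and that it admits a uniquely determined inverse. The base case $W = \varepsilon$ is immediate. For the inductive step with $W = W'a$ and $\wfcorr(W')$ already valid on $F' := F(W')$, I would verify the defining properties of $\lbs(F(W))$ at the new node $u$. Injectivity of $\psymb(W)$ follows from $a$ being distinct from the letters of $W'$. The local binary search inequalities along the edges incident to $u$ require a routine case analysis: in case (i) one uses $\mathrm{val}(a_j) \leq \mathrm{val}(a) - 1$ for the left-adjacent root $u_j$ and $\mathrm{val}(a_{j'}) \geq \mathrm{val}(a) + 1$ for the right-adjacent root $u_{j'}$, which forces $a_j < a < a_{j'}$ in $\Zaug$; in case (ii-a) the defining inequality $a > a_j$ directly handles the left edge to $u_j$, while $\mathrm{val}(a_{j+1}) \geq \max(I_j) + 2 > \mathrm{val}(a)$ handles the right edge when $u$ has a right child; case (ii-b) is symmetric. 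The flag condition $\mathrm{val}(\psymb(W)(u)) \in \interval(u, F(W))$ is immediate in all cases, since by construction $\mathrm{val}(a)$ lies in the support of the subtree rooted at $u$. A key observation is that the canonical labels of nodes already present in $F'$ are unchanged by the insertion (the subtrees below them are structurally untouched), so the local-search property at those nodes is inherited. Finally, $\qsymb(W)$ inherits the decreasing property from $\qsymb(W')$ because the maximal label $|W|$ is placed at the root $u$.

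For the inverse, given any pair $(\psymb, \qsymb) \in \lbs(F) \times \decreasing(F)$ with $|F| = r \geq 1$, the decreasing property forces $u := \qsymb^{-1}(r)$ to be a root of some tree $T$ of $F$; set $a := \psymb(u)$. The three insertion cases are then separated by comparing $\mathrm{val}(a)$ with $\can_F(u)$: tracing through the definition shows $\can_F(u) = \mathrm{val}(a)$ precisely in case (i), $\can_F(u) = \max(I_j) + 1 > \mathrm{val}(a)$ precisely in case (ii-a), and $\can_F(u) = \min(I_j) - 1 < \mathrm{val}(a)$ precisely in case (ii-b). This trichotomy uniquely identifies the insertion case, and the extraction is then dictated by reversing the corresponding merging or splitting of subtrees. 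Because $u$ is a root of $F$, the subtree rooted at any $v \neq u$ is unchanged upon removal of $u$, hence $\interval(v, F) = \interval(v, F \setminus \{u\})$, so the restricted labelings $(\psymb', \qsymb')$ automatically satisfy the local-search and decreasing conditions on the smaller forest.

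The main obstacle, and the real heart of the proof, is precisely the tight coupling between insertion cases and the sign of $\can_F(u) - \mathrm{val}(a)$, which is what makes the extraction well-defined without ambiguity. Once this trichotomy is established, verifying that extraction followed by insertion (and vice versa) is the identity reduces to checking that each case reassembles the same configuration of subtrees that it disassembled, a matter of unwinding the definition in each branch. The bijectivity of $\wfcorr$ then follows from the induction, since each insertion strictly grows the word and each extraction strictly shrinks the forest, and the two procedures are mutually inverse at every step.
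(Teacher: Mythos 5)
Your proof is correct and follows essentially the same plan as the paper's: induction on $|W|$ for well-definedness of the insertion, an inverse obtained by reading off the node with the maximal $\qsymb$-label, and a verification that the two procedures are mutually inverse. The only substantive difference is that you make explicit the trichotomy that recovers the insertion case from the pair $(\psymb,\qsymb)$ alone — that the sign of $\can_F(u)-\mathrm{val}(a)$ distinguishes case (i) from (ii-a) from (ii-b) — a point the paper compresses into ``it is a routine verification that this insertion results in $\psymb$.'' Your filling-in is accurate and is arguably the cleanest way to see why the extraction step is unambiguous.
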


\begin{proof}
Let $\wfcorr(W)=(\psymb(W),\qsymb(W))$.
The procedure is well defined by immediate induction: $\psymb(W)$ and $\qsymb(W)$ have the same underlying $F\in\IF$, and $\psymb(W)$ belongs to $\lbs(F)$ while $\qsymb(F)$ is in $\decreasing(F)$ by simply inspecting the construction.

Given $(\psymb,\qsymb)$ with common shape $F\in \IF$, define $W$ to be the word obtained by reading the labels in $P$ in the order given by the labels $1,2,\ldots,$ in $\qsymb$.
 Define $\Gamma$ by $W=\Gamma(\psymb,\qsymb)$.
  It is clear by construction that $\Gamma\circ \wfcorr$ is the identity on injective words.
   This shows already that $\wfcorr$ is injective. 

Now if $W=\Gamma(\psymb,\qsymb)$, we need to show that $\wfcorr(W)=(\psymb,\qsymb)$. 
This is true if $F$ is the empty forest, so assume $F$ is nonempty. 
Let $u$ be the vertex with the maximal label $|F|$ in $\qsymb$. 
Let $a$ be the label of $u$ in $\psymb$. 
By the definition of $\Gamma$ we have $W=W'a$ for a word $W'$. 
Let $\psymb'$ and $\qsymb'$ be the labelings obtained by restricting $\psymb$ and $\qsymb$ respectively to the shape $F'=F\setminus\{u\}$. 
Again by the definition of $\Gamma$ we have $W'=\Gamma(\psymb',\qsymb')$, so by induction we may assume $\wfcorr(W')=(\psymb',\qsymb')$. 

Now we want to insert $a$ in $\psymb'$. Following the definition of $\wfcorr$, it is a routine verification that this insertion results in $\psymb$, thereby completing the proof. 
\end{proof}

\begin{remark}
If $W$ is a permutation in $S_n$ in one-line notation, then the resulting shape $F(W)$ is a tree $T(W)$ supported on $\{1,\ldots,n\}$ and  $\psymb(W)$ is the canonical labeling. All the information is thus contained in the decreasing binary tree $\qsymb(W)$. The correspondence $W\to \qsymb(W)$ is easily seen to be the standard bijection between permutations and decreasing trees. 

This particular case can also be seen as the special case of the \emph{Sylvester correspondence} \cite{HNT05}. However while the extension of the latter to general words can be done by standardization, this is clearly not the case for our procedure.
\end{remark}

\subsection{$\wf$-Parking procedure}
\label{subsec:wf_parking}

We now employ the correspondence $\wfcorr$ to define a new parking procedure for words in $\injwords{\Zaug}$ with the underlying idea being to keep track of the support rather than the whole indexed forest.
We see our procedure as a function $\wf$ associating to $W\in\injwords{\Zaug}$ a finite subset $S=\wf(W)$ of $\bZ$ with size $|S|=|W|$. 
A restrictive version of $\Omega$ was already stated in the introduction.

The precise description is as follows. 
Write $W=W'a$, and assume $\wf(W')$ is a subset with interval decomposition $(I_j)_j$. 
\begin{enumerate}[align=parleft,left=0pt,label=\text{(\arabic*)}]
 \item If $\mathrm{val}(a)\notin\wf(W')$, then simply set $\wf(W)=\wf(W')\sqcup\{\mathrm{val}(a)\}.$ 
\item If $\mathrm{val}(a)\in I_j$ for some $j$,  consider the largest $i\in \{1,\ldots,|W'|\}$ such that $W_i\in I_j$. Then set 
\begin{align*}
\wf(W)=\left\lbrace \begin{array}{ll}\wf(W')\sqcup\{\min(I_j)-1\} & a<W_i
\\ 
\wf(W')\sqcup\{\max(I_j)+1\} & a>W_i.\end{array}\right.
\end{align*}
\end{enumerate}

Comparison with the definition of $\wfcorr$ gives:
\begin{proposition}
\label{prop:correspondence_and_parking}
For any word $W\in\injwords{\Zaug}$, one has $\supp(\wfcorr(W))=\wf(W)$. In particular 
\[
\text{$W$ is $\wf$-parking $\Longleftrightarrow$ $F(W)$ has support $\{1,\ldots,|W|\}$.}
\]
\end{proposition}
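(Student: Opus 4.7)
The plan is a direct induction on $|W|$ verifying that the $\wf$-procedure is nothing more than the $\wfcorr$ insertion algorithm with its output restricted to support. Both procedures are defined recursively on the last letter, so it suffices to check case by case that each step of one mirrors the corresponding step of the other.

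First I would dispose of the base case: for $W=\varepsilon$ both $F(\varepsilon)$ and $\wf(\varepsilon)$ are empty, so the equality of supports holds trivially. For the inductive step I would write $W = W'a$ and set $S' \coloneqq \supp(F(W'))$, with maximal interval decomposition $S' = I_1 \sqcup \cdots \sqcup I_k$. By induction, $S' = \wf(W')$ and the two procedures agree up through $W'$. I would then match cases:
\begin{itemize}
\item If $\mathrm{val}(a)\notin S'$, the $\wfcorr$ rule in case (i) adds a new internal node with canonical label $\mathrm{val}(a)$ (possibly merging two adjacent intervals). The new support is $S' \cup \{\mathrm{val}(a)\}$, exactly matching case (1) of $\wf$.
\item If $\mathrm{val}(a)\in I_j$, the rule in case (ii) creates a new root $u$ whose canonical label is $\max(I_j)+1$ when $a>a_j$ and $\min(I_j)-1$ when $a<a_j$ (where $a_j$ is the root label of $T_j$ in $\psymb(W')$), again possibly merging with a neighboring interval. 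The new support is therefore $S'\cup\{\max(I_j)+1\}$ or $S'\cup\{\min(I_j)-1\}$, which matches case (2) of $\wf$ provided we can identify $a_j$ with $W_i$, the last letter of $W'$ whose value lies in $I_j$.
\end{itemize}

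The only step that merits separate attention is this last identification. I would establish it as an auxiliary lemma, also proved by induction on $|W'|$: at every stage of the insertion, the root of each tree $T_j$ is labeled (in $\psymb$) by the most recently inserted letter whose value lies in the current interval $I_j$. This is true when an isolated vertex is created (the new node is the root and is itself the most recent letter), and is preserved by each subsequent insertion touching $I_j$ because in every sub-case of (i) and (ii) the newly inserted letter $a$ becomes the root of the resulting tree, while insertions that do not touch $I_j$ leave both $T_j$ and the relevant ``most recent'' letter unchanged. Interval merges are handled in the same way since the merging letter becomes the root of the combined tree.

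With $a_j = W_i$ established, the case analysis shows $\wf(W) = S'\cup\{\mathrm{val}(a)\}$ or $S'\cup\{\max(I_j)+1\}$ or $S'\cup\{\min(I_j)-1\}$ in perfect agreement with $\supp(F(W))$, completing the induction. The ``in particular'' clause is then immediate: by definition $W$ is $\wf$-parking iff $\wf(W) = \{1,\ldots,|W|\}$, and by the first part this is exactly $\supp(F(W)) = \{1,\ldots,|W|\}$. There is no real obstacle here; the work is purely in the bookkeeping of the case match, and the auxiliary lemma about the root labels is the only piece that deserves to be spelled out explicitly.
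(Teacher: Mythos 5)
Your proof is correct and takes essentially the same route as the paper, which simply states that the result follows by ``comparison with the definition of $\wfcorr$'' without giving details. The auxiliary lemma you isolate---that the $\psymb$-label of the root of $T_j$ is always the most recently inserted letter with value in $I_j$, because every insertion into $T_j$ makes the new letter the root and insertions elsewhere leave $T_j$ untouched---is exactly the observation that makes the case-by-case match between the two recursive definitions go through.
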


\begin{remark}
The relation between $\wf$-parking and $\wfcorr$-correspondence is an instance of a more general phenomenon detailed by the first author in~\cite{NadParking}: any so-called ``bilateral'' parking procedure naturally gives rise to a similar correspondence with pairs of labeled indexed forests.
\end{remark}

\subsection{The $\wf$-equivalence}
\label{subsec:wf_equivalence}

Our main use for the $\wfcorr$-correspondence is to introduce the following~notion.

\begin{definition}
The equivalence relation $\wfequiv$ on $\injwords{\Zaug}$ is defined by $W\wfequiv W'$ if $\psymb(W)=\psymb(W')$.
\end{definition}

The equivalence classes under $\wfequiv$ are thus naturally indexed by $P\in \lbs$. Explicitly,  the class of $P$ is $\mathcal{C}_P=\{W\in\injwords{\Zaug}\suchthat \psymb(W)=P\}$. We will also write $\mathcal{C}(W)$ for the $\wfequiv$-class of $W$.

Let $P\in \lbs(F)$ for some $F\in\IF$. Since $\wfcorr$ is bijective we have 
\begin{equation}
\label{eq:P_class}
\mathcal{C}_P=\{W\in\injwords{\Zaug}\suchthat \wfcorr(W)=(P,Q)\text{ for some }Q\in \decreasing(F)\}.
\end{equation}
Any two decreasing forests of the same shape are connected by a sequence of swaps of labels $i,i+1$ for some $i$, where the vertices labeled $i,i+1$ are not descendants of one another. So if $Q,Q'$ are two decreasing forests related by such a swap, we write $W\simwf W'$ if $\wfcorr(W)=(P,Q)$ and $\wfcorr(W')=(P,Q')$. 
By \eqref{eq:P_class}, we thus have that $\simwf$ generates $\wfequiv$. 

We now translate this perspective of performing swaps on trees to performing a commutation move on words.
Given $W,W'\in\injwords{\Zaug}$, we have $W\simwf W'$ if and only if $W=UabV$, $W'=UbaV$ for some $a,b\in\Zaug$ such that when inserting $a$ followed by $b$ in $\psymb(U)$, the node labeled $a$ is not a child of the node labeled $b$. 
For this, one needs to look at labels of the root nodes in the trees constituting $\psymb(U)$ as well as the support of $\psymb(U)$. 

As a bookkeeping aid, we introduce an alphabet $\Zleaves=\{\underline{i}\suchthat i\in\bZ\}$ and extend the total order on $\Zaug$ to $\Zleaves\sqcup \Zaug$ by $\underline{i}<\lf{i}{j}<\underline{i+1}$ for any $i,j$. 
Pictorially, while $\bZ$ (and by extension $\Zaug$ via $\mathrm{val}:\Zaug\to\bZ$) in indexed forests was naturally associated with unit intervals on the integer line, we now think of $\Zleaves$ as points in the integer line.
 The reader may benefit from interpreting the crosses, i.e. leaves, in all our figures as elements of $\Zleaves$, with $\underline{i}$ and $\underline{i+1}$ representing the left and right endpoints respectively of the unit segment labeled by $i\in \bZ$.

\begin{definition}
\label{def:rl}
For $P\in \lbs$,  the {\em rootlist} $\rootlist(P)$ of $P$ is defined as the totally ordered subset of $\Zleaves\sqcup \Zaug$ containing all root labels of $P$, as well as all $\underline{i}$ whenever $i-1,i\notin \supp(P)$. 
We define by extension $\rootlist(W)=\rootlist(\psymb(W))$.
\end{definition}
Note that $\rootlist(P)$ is infinite, containing $\underline{i}$ and $\underline{-i}$ for  all $i$ large enough. 
 Going back to Figure~\ref{fig:correspondence}, the rootlist of the labeled indexed forest $P$ in the middle of the second row has elements $\{\cdots <\underline{0}<\underline{1}<\lf{4}{1}<\underline{6}<\lf{8}{1}<\underline{10}<\cdots\}$.
 
 A simple inspection of the insertion procedure $\wfcorr$ yields the following lemma that shows how the rootlist changes upon inserting a single letter.
 \begin{lemma} 
\label{lem:rootlist_computation} Consider a word $W$ and a letter $a$ such that $Wa\in\injwords{\Zaug}$. 
Let $r_1,r_2$ be \emph{the} two elements in $\rootlist(W)$ that are consecutive for the total order on $\rootlist(W)$ and satisfy $r_1<a<r_2$. Then 
\begin{align*}
\rootlist(Wa)=\rootlist(W)\setminus \{r_1,r_2\}\sqcup\{a\}.
\end{align*} 
 \end{lemma}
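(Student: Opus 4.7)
The plan is a case analysis tracking how $\supp(W)$ and the root labels of $\psymb(W)$ change when inserting the single letter $a$. Since the set $\rootlist$ is determined by these two data (its elements in $\Zaug$ are exactly the root labels of $\psymb$, while its elements $\underline{k}\in\Zleaves$ are pinned down by $\supp$), the effect on the rootlist can be read off directly from the insertion procedure of Section~\ref{subsec:wf}.

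Set $i=\mathrm{val}(a)$ and decompose $\supp(W)=I_1\sqcup\cdots\sqcup I_k$, writing $a_l$ for the $\psymb$-label of the root of the tree $T_l$ supported on $I_l$. In every situation of the insertion algorithm the new node $u$ ends up as the root of its tree in $F(Wa)$ with label $a$, so $a$ belongs to $\rootlist(Wa)$. What must be shown is that exactly two elements of $\rootlist(W)$ disappear, and that they are the immediate neighbours of $a$ in the total order on $\rootlist(W)$.

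\textbf{Case 1:} $i\notin\supp(W)$. Then $u$ receives the canonical label $i$ in $F(Wa)$, and possibly absorbs $u_j$ (if $i-1\in I_j$) and/or $u_{j+1}$ (if $i+1\in I_{j+1}$). In each of the four sub-cases I will identify the candidate $r_1$ as $a_j$ when $i-1\in\supp(W)$ and as $\underline{i}$ otherwise, with a symmetric rule for $r_2$ on the right. The support is augmented by the single element $i$, so the only gap markers whose rootlist-membership can change are $\underline{i}$ and $\underline{i+1}$, and the membership swap matches the identification of $r_1,r_2$.

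\textbf{Case 2:} $i\in I_j$. Then $u$ is inserted above $u_j$ inside $T_j$. In the sub-case $a>a_j$, the new root of this tree is $a$ with $u_j$ on its left, and $T_{j+1}$ is absorbed precisely when $\max(I_j)+2\in\supp(W)$; the lost pair is then $\{a_j,a_{j+1}\}$ when merging occurs, and $\{a_j,\underline{\max(I_j)+2}\}$ otherwise. The sub-case $a<a_j$ is completely symmetric, swapping $\max/+$ with $\min/-$.

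The key observation needed in every sub-case is that no element of $\rootlist(W)$ lies strictly between the proposed $r_1$ and $a$ (or between $a$ and the proposed $r_2$). This reduces to noting that any intermediate marker $\underline{k}$ would force $k-1$ or $k$ to lie in the interior of some $I_l$, contradicting the condition $k-1,k\notin\supp(W)$ required for $\underline{k}\in\rootlist(W)$. The main obstacle is purely bookkeeping: keeping the eight sub-cases organised and confirming in each that the modification to $\supp$ predicted by the algorithm produces precisely the claimed symmetric difference on the rootlist, with no spurious alterations elsewhere.
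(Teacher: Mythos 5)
Your proposal is correct and matches the paper's approach exactly: the paper dispenses with this lemma in a single sentence (``A simple inspection of the insertion procedure $\wfcorr$ yields the following lemma''), and your case analysis is precisely that inspection carried out in detail. One small note: when arguing no element of $\rootlist(W)$ lies strictly between $r_1$ and $a$ or between $a$ and $r_2$, you address only the gap markers $\underline{k}\in\Zleaves$; you should also observe that no root label $a_{j'}\in\Zaug$ can intervene, which follows because each $a_{j'}$ has $\mathrm{val}(a_{j'})\in I_{j'}$ and the intervals $I_{j'}$ are separated by gaps of size at least two, forcing all other root labels to lie outside $[r_1,r_2]$.
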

 As an example, consider $W=\lf{4}{2}\lf{7}{1}\lf{4}{2}\lf{4}{1}\lf{8}{1}$ and $a=\lf{4}{3}$. 
By referring to the second row in Figure~\ref{fig:correspondence} we see that $r_1=\lf{4}{1}$ and $r_2=\underline{6}$. 
On inserting $a$ as per $\wfcorr$, both $r_1$ and $r_2$ get removed from the rootlist, while $a$ gets added.
 
 Equipped with this elementary insight into the rootlist, we recast $\simwf$ as a commutation move. 
 The preceding discussion and lemma imply the next two results.
\begin{proposition}
\label{prop:rootlist_separation}
Given $W,W'$, we have $W\simwf W'$ if and only if $W=UabV$, $W'=UbaV$ for $a,b\in\Zaug$ and $a,b$ are separated by at least two elements of $\rootlist(U)$, i.e. there exist $r_1<r_2\in \rootlist(U)$ such that $a<r_1<r_2<b$ or $b<r_1<r_2<a$.
\end{proposition}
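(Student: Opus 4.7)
The plan is to recast the condition $W\simwf W'$ as a statement about how the insertion algorithm interacts with $\rootlist(U)$. By the definition of $\simwf$, $W$ and $W'$ must agree everywhere except at two adjacent positions where the letters are swapped, so I can write $W=UabV$ and $W'=UbaV$. Since the insertion of $V$ from stage $|U|+2$ onward depends only on the $P$-symbol at that stage, the problem reduces to showing that $\psymb(Uab)=\psymb(Uba)$ and that the nodes $u_a,u_b$ created at these two steps lie in non-ancestor relation, if and only if $a,b$ are separated by at least two elements of $\rootlist(U)$.

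The key observation I will exploit is that $u_b$ is created as a root node whose (at most two) children are precisely the existing root nodes whose labels equal $r_1^{b,\mathrm{new}}<b<r_2^{b,\mathrm{new}}$, the neighbors of $b$ in $\rootlist(Ua)$. Since later insertions never reconfigure the relative ancestor relations of previously created nodes, and since $u_b$ enters as a root upon creation, $u_a$ can be a descendant of $u_b$ only as a direct child, i.e.\ exactly when $a\in\{r_1^{b,\mathrm{new}},r_2^{b,\mathrm{new}}\}$. Assume WLOG $a<b$; then the only candidate is $r_1^{b,\mathrm{new}}$. Using Lemma~\ref{lem:rootlist_computation} to write $\rootlist(Ua)=\rootlist(U)\setminus\{r_1^a,r_2^a\}\cup\{a\}$, a direct calculation shows $r_1^{b,\mathrm{new}}=a$ iff every element of $\rootlist(U)\cap(a,b)$ is deleted during the insertion of $a$. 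Since among $\{r_1^a,r_2^a\}$ only $r_2^a$ can lie in $(a,b)$, this holds iff $|\rootlist(U)\cap(a,b)|\leq 1$. Hence $u_a$ is \emph{not} a descendant of $u_b$ iff $|\rootlist(U)\cap(a,b)|\geq 2$, which is exactly the separation condition.

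To close the backward direction I still have to verify $\psymb(Uab)=\psymb(Uba)$ under the separation hypothesis. The point is that two separating elements $s_1<s_2$ of $\rootlist(U)$ force $r_2^a\leq s_1<s_2\leq r_1^b$, so the bracketing pairs $\{r_1^a,r_2^a\}$ and $\{r_1^b,r_2^b\}$ are disjoint. Each insertion only modifies the two root slots in its own bracketing pair, leaving all other roots untouched, so the two insertions act on disjoint roots and commute as operations on labeled indexed forests. The main obstacle I anticipate is keeping the two notions of ``bracketing pair of $b$'' straight, namely $(r_1^b,r_2^b)$ in $\rootlist(U)$ versus $(r_1^{b,\mathrm{new}},r_2^{b,\mathrm{new}})$ in $\rootlist(Ua)$, and verifying that under the separation hypothesis they coincide, so that the order of insertion really does not matter.
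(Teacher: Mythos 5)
Your proof is correct and fills in exactly the argument the paper leaves implicit (the paper only remarks that the proposition follows from the preceding discussion and Lemma~\ref{lem:rootlist_computation}). You take the same route: translate the $\simwf$-swap of $\qsymb$-labels $i,i+1$ into the local condition that $u_a$ is not a child of $u_b$ after inserting $a$ then $b$ into $\psymb(U)$, use the rootlist to detect when $u_a$ becomes a child of $u_b$, and check that the two insertions commute under the separation hypothesis because the bracketing pairs $\{r_1^a,r_2^a\}$ and $\{r_1^b,r_2^b\}$ in $\rootlist(U)$ are disjoint and untouched by one another.
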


\begin{lemma}
\label{lem:anti_monoid_congruence}
If $UabV\simwf UbaV$, then  $U'abV'\simwf U'baV'$ for any subword $U'$ of $U$ and $V'$ of $V$.
\end{lemma}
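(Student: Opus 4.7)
My plan is to invoke Proposition~\ref{prop:rootlist_separation} to recast the claim as a monotonicity statement for rootlists. Since $UabV \simwf UbaV$ is equivalent (assuming without loss of generality that $a<b$) to $|\rootlist(U) \cap (a, b)| \geq 2$ and this condition does not involve $V$, I may assume $V = V' = \varepsilon$ and need only show that removing letters from $U$ preserves the separator count being at least $2$. Every subword arises by iterated single-letter deletions, so it suffices to treat $U = U_1 c U_2$ and $U' = U_1 U_2$, which I would handle by induction on $|U_2|$.

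The base case $|U_2|=0$ is clean: by Lemma~\ref{lem:rootlist_computation}, $\rootlist(U_1 c)$ is obtained from $\rootlist(U_1)$ by removing two consecutive elements $r_1 < c < r_2$ and inserting $c$. Setting $\Delta := |\rootlist(U_1 c) \cap (a, b)| - |\rootlist(U_1) \cap (a, b)|$, one has $\Delta \in \{-2,-1,0,1\}$; moreover $\Delta = +1$ occurs only when $c \in (a,b)$ and both $r_1, r_2 \notin (a,b)$, which forces $r_1 \leq a < b \leq r_2$ and, since $r_1, r_2$ are consecutive in $\rootlist(U_1)$, pins the pre-count to $0$ and the post-count to $1$. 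Consequently a post-count at least $2$ forces $\Delta \leq 0$ and hence a pre-count at least $2$, as required.

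The inductive step is the main obstacle, since removing a letter from the middle of $U$ modifies the rootlist at step $k$ and then alters the neighbors found by all subsequent insertions. I would tackle it either (a) by comparing the two trajectories $\rootlist(c_1 \cdots c_i)$ versus $\rootlist(c_1 \cdots \widehat{c}_k \cdots c_i)$ step by step, with a case analysis on which of the relevant $r_i$ lie in $(a,b)$, or (b) by first establishing the auxiliary monotonicity $\wf(U') \subseteq \wf(U)$ via a coupling of the two parking processes and Proposition~\ref{prop:correspondence_and_parking}. In route (b), monotonicity of $\wf$ translates into $\Zleaves \cap \rootlist(U) \subseteq \Zleaves \cap \rootlist(U')$, and the residual root-label contribution in $\rootlist \cap (a,b)$ is controlled by a counting argument using that each insertion decreases $|\rootlist|$ by exactly one.
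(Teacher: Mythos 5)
Your reduction is exactly right: by Proposition~\ref{prop:rootlist_separation} the suffix $V$ can be discarded, and the lemma becomes the claim that if $|\rootlist(U)\cap(a,b)|\geq 2$ then $|\rootlist(U')\cap(a,b)|\geq 2$ for any subword $U'$ of $U$. Your base-case analysis (deleting the \emph{last} letter) is correct; indeed, Lemma~\ref{lem:rootlist_computation} shows that $n(W)\coloneqq|\rootlist(W)\cap(a,b)|$ can increase by $+1$ under appending a letter only when the pre-count $n(W)$ was $0$, so once $n$ is $\geq 2$ it stays $\geq 2$ along all \emph{prefixes}, and in particular deleting a suffix is safe. This is also the route the paper's (very terse) proof gestures at.

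However, there is a genuine gap at the place you yourself flag as ``the main obstacle'': deleting a letter $c$ from the \emph{interior} of $U$. Your sketch of an induction on $|U_2|$ does not close, because the prefix argument only lets you pass from $U_1cU_2'd$ to $U_1cU_2'$, and the inductive hypothesis then gives $n(U_1U_2')\geq 2$, but appending $d$ back can drop $n$ by $1$, so you only recover $n(U_1U_2'd)\geq 1$. Neither of your two proposed fixes is carried out. Route (a) requires a genuinely new invariant coupling the two trajectories $\rootlist(U_1 V)$ and $\rootlist(U_1 c V)$; the naive comparison via case analysis breaks because after the first interior step the two rootlists need not differ by a single ``remove two consecutive, insert one'' move, and subsequent insertions can see different neighboring pairs. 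Route (b) appeals to an auxiliary monotonicity $\wf(U')\subseteq\wf(U)$ that you do not prove (and that is by no means obvious; even if true, it only controls the $\Zleaves$-part of the rootlist), while the claim that the $\Zaug$-part (the root labels) ``is controlled by a counting argument using that each insertion decreases $|\rootlist|$ by exactly one'' is too vague to evaluate: $|\rootlist|$ is infinite, so one has to localize this counting to the window $(a,b)$ and relate the root-label counts of two structurally different labeled forests. In short: the reduction and the easy case are correct and match the paper's stated strategy, but the heart of the lemma --- stability of the separator count under interior deletions --- is not proved.
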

Indeed by Proposition~\ref{prop:rootlist_separation} the suffix $V$ plays no role, and using that proposition together with Lemma~\ref{lem:rootlist_computation} shows the result.

We are now in a position to give the enumerative consequences of the previous results.

\subsection{From slide polynomials to forest polynomials}
Recall the polynomials $\forestpoly(P)$ from Section~\ref{sec:forest_polynomials}. By~\eqref{eq:P_class}, words in $\mathcal{C}_P$ correspond bijectively to decreasing forests  with the same underlying indexed forest. 
From~\ref{eq:forests_into_slides_generalized} we know that
\[
\forestpoly(P)=\sum_{W\in\mathcal{C}_P}\slide(W),
\]
which in turn leads us:\begin{proposition}
\label{prop:from_slides_to_forests}
If $A\subset \injwords{\Zaug}$ is a finite subset of words in $\Zaug$ that is stable under $\wfequiv$, then
\[
\sum_{W\in A} \slide(W)=\sum_{P\in \psymb(A)}\forestpoly(P),
\]
where $\psymb(A)$ is the set of $\psymb{W}$ for $W\in A$. 
\end{proposition}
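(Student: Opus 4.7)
The proof is an accounting argument that reduces the claim to the already-established identity \eqref{eq:forests_into_slides_generalized}, so the plan is essentially to check that all the pieces line up.

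First I would exploit the hypothesis that $A$ is stable under $\wfequiv$. Recall that the $\wfequiv$-classes are exactly the sets $\mathcal{C}_P = \{W \in \injwords{\Zaug} \suchthat \psymb(W) = P\}$ as $P$ ranges over $\lbs$. If some $W \in A$ has $\psymb(W) = P$, then every $W' \wfequiv W$ also has $\psymb(W') = P$ and lies in $A$ by stability. Hence $A$ decomposes as the disjoint union
\[
A = \bigsqcup_{P \in \psymb(A)} \mathcal{C}_P,
\]
and accordingly
\[
\sum_{W \in A} \slide(W) = \sum_{P \in \psymb(A)} \sum_{W \in \mathcal{C}_P} \slide(W).
\]

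The task then reduces to showing, for each $P \in \lbs(F)$, the single-class identity
\[
\sum_{W \in \mathcal{C}_P} \slide(W) = \forestpoly(P). \tag{$\star$}
\]
Here I would appeal to \eqref{eq:forests_into_slides_generalized}, which states
$\forestpoly(P) = \sum_{\ell \in \decreasing(F)} \slide(\mathbf{i}_\ell)$,
with $\mathbf{i}_\ell$ the word obtained by reading the $P$-labels in the order prescribed by the linear extension $\ell$. The key point is that this reading map $\ell \mapsto \mathbf{i}_\ell$ coincides with the bijection $\decreasing(F) \to \mathcal{C}_P$ obtained by pairing $P$ with $\ell$ and then applying the inverse of $\wfcorr$: indeed, the map $\Gamma$ from the proof of the bijectivity of $\wfcorr$ is defined by reading the $\psymb$-labels in the order given by the $\qsymb$-labels, which is precisely the recipe for $\mathbf{i}_\ell$. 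Together with \eqref{eq:P_class}, this identifies the sum on the right of \eqref{eq:forests_into_slides_generalized} with the sum on the left of $(\star)$, yielding $(\star)$.

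There is no real obstacle here; the only thing to verify carefully is the compatibility between the two descriptions of the bijection $\mathcal{C}_P \leftrightarrow \decreasing(F)$, which is immediate from the explicit description of $\Gamma$. Summing $(\star)$ over $P \in \psymb(A)$ then gives the proposition.
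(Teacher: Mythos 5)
Your proof is correct and follows essentially the same route as the paper: decompose $A$ into the classes $\mathcal{C}_P$, then apply \eqref{eq:forests_into_slides_generalized} together with the observation that the reading map $\ell\mapsto \mathbf{i}_\ell$ is precisely the bijection $\decreasing(F)\to\mathcal{C}_P$ given by $\Gamma$ (equivalently, the inverse of $W\mapsto\qsymb(W)$ on the class). The paper compresses this to a one-line deduction just before the proposition; your write-up merely makes the same accounting explicit.
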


We now have our main theorem:
\begin{theorem}
\label{th:slide_to_parking}
Let $n\geq 1$. Let $A\subset \injwords{\Zaug}$ be composed of words of length $n-1$, with all letters having values in $[n]$, and stable under $\wfequiv$.
Then 
\[
\ds{\sum_{W\in A} \slide(W)}{n} =|\text{$\wf$-parking words in $A$}|.
\]
\end{theorem}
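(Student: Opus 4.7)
My plan is to combine Proposition~\ref{prop:from_slides_to_forests}, Corollary~\ref{cor:forest_and_qsym}, and Proposition~\ref{prop:all_1s_tree} with the identity $\ds{f}{n}=\overline{f}(1,\dots,1)$ from~\eqref{eq:ds_and_qsym}. Since $A$ is stable under $\wfequiv$ it decomposes as a disjoint union $\bigsqcup_{P\in\psymb(A)}\mathcal{C}_P$, so Proposition~\ref{prop:from_slides_to_forests} together with Proposition~\ref{prop:betaP_is_betaF} rewrites the sum of slides as $\sum_{P\in\psymb(A)}\forestpoly_{F(P)}$, where $F(P)$ is the common shape of $P$ (with $|F(P)|=n-1$). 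By linearity of divided symmetrization, it suffices to prove the class-by-class equality
\[
\ds{\forestpoly_F}{n}=|\{W\in\mathcal{C}_P : W \text{ is }\wf\text{-parking}\}|
\]
for each class $\mathcal{C}_P$ of shape $F$.

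I will compute both sides and show each equals $|\decreasing(F)|$ when $\supp(F)=\{1,\ldots,n-1\}$ and vanishes otherwise. For the right-hand side this is immediate: by Proposition~\ref{prop:correspondence_and_parking}, $W\in\mathcal{C}_P$ is $\wf$-parking iff $\supp(F)=[n-1]$, and in that case the $\wfcorr$-bijection with $\decreasing(F)$ gives the count $|\decreasing(F)|$. For the left-hand side, I will apply Corollary~\ref{cor:forest_and_qsym}. If $\supp(F)\subseteq[n-1]$, then $|\supp(F)|=n-1$ forces $\supp(F)=[n-1]$, so $F$ is a single indexed tree and Corollary~\ref{cor:forest_and_qsym} places $\forestpoly_F$ already in the ABB span; Proposition~\ref{prop:all_1s_tree} then gives $\ds{\forestpoly_F}{n}=\forestpoly_F(1,\ldots,1)=|\decreasing(F)|$ since $M=|T|=n-1$ makes the binomial factor equal to $1$. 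If instead $\supp(F)\nsubseteq[n-1]$, then Corollary~\ref{cor:forest_and_qsym} places $\forestpoly_F$ in $\qsym_n^+$, so~\eqref{eq:ds_and_qsym} yields $\ds{\forestpoly_F}{n}=0$.

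The one delicate point is that invoking Corollary~\ref{cor:forest_and_qsym} requires $\lsupp(F)\subseteq[n]$, and I will establish this by a direct inspection of the $\wfcorr$-procedure. The constraint that every letter of $W$ has value in $[n]$ forces $\supp(F)\subseteq\{1,\ldots,n+1\}$, since the only way $n+1$ can enter the support is via the extension rule $\max(I_j)+1$, which is triggered when a letter $a$ of value $n$ is inserted into an interval with maximum $n$ and with $a$ exceeding the current root label $a_j$. At that instant the new canonical $n+1$ vertex is created as the parent of $u_j$, so its left child is the genuine internal node $u_j$; since later insertions only add ancestors and never modify an existing node's children, this vertex retains its internal left child throughout, so $n+1\notin\lsupp(F)$. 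This is the main obstacle; once handled, summing the class-wise equality over $P\in\psymb(A)$ yields the theorem.
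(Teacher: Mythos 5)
Your approach is essentially the paper's: decompose $\sum_{W\in A}\slide(W)$ into forest polynomials via Proposition~\ref{prop:from_slides_to_forests}, reduce modulo $\qsym_n^+$ with Corollary~\ref{cor:forest_and_qsym}, and evaluate at $x_i=1$ with Proposition~\ref{prop:all_1s_tree}; doing the computation class by class rather than globally is a repackaging of the same argument rather than a different route. What you add is a justification of the hypothesis $\lsupp(F)\subseteq[n]$ required by Corollary~\ref{cor:forest_and_qsym}, which the paper asserts without proof, and that is a sensible gap to fill. However, your intermediate claim that the value constraint forces $\supp(F)\subseteq\{1,\ldots,n+1\}$ is false: with $n=4$ and $W=\lf{4}{1}\lf{4}{2}\lf{4}{3}$ one has $\wf(W)=\{4,5,6\}$, so $n+2$ is already in the support, and iterating the rule $\max(I_j)+1$ can push the support arbitrarily far to the right. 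The conclusion you actually need, $\lsupp(F)\subseteq[n]$, does hold, and your own reasoning proves it once the false intermediate claim is dropped: a node labeled $i>n$ cannot be produced by the case $\mathrm{val}(a)\notin S'$ (since $\mathrm{val}(a)\le n<i$), nor by the $\min(I_j)-1$ rule (since $\mathrm{val}(a)\in I_j$ and $\mathrm{val}(a)\le n$ force $\min(I_j)\le n<i+1$), so it is produced by $\max(I_j)+1$, in which case its left child is the internal node $u_j$; as children are never reassigned, $i\notin\lsupp(F)$, and this applies to every $i>n$, not only $i=n+1$. You should also explicitly dispose of the case $F\notin\IF_+$, where $\lsupp(F)$ may fail to lie in $[n]$, but there $\forestpoly_F=0$ and both sides of your class-wise identity vanish trivially.
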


\begin{proof}
This is simply a matter of combining the previous results. 
In the following we write for short $\supp(P)\coloneqq \supp(F)$ when $P\in\lbs(F)$.

By Proposition~\ref{prop:from_slides_to_forests} we have that $\sum_{W\in A} \slide(W)$ is equal to $\sum_{P\in \psymb(A)}\forestpoly(P)$. 
Since letters have values in $[n]$, the underlying indexed forests $F$ in this sum satisfy $\lsupp(F)\subseteq [n]$. 
Since $\forestpoly(P)=\forestpoly_F$ when $P\in\lbs(F)$ by Proposition~\ref{prop:betaP_is_betaF}, we can apply Corollary~\ref{cor:forest_and_qsym}: $\forestpoly(P)$ is in $\qsym_n^+$ unless $\supp(P)\subseteq [n-1]$, and in that case $\forestpoly(P)$ lies in $\bQ\{\alpx^{\bfc}\suchthat \bfc \in \abb{n}\}$. 
The latter occurs  when $\supp(P)=[n-1]$. Indeed, since all words have length $n-1$, the forests have size $n-1$.

It follows that 
\[\sum_{W\in A} \slide(W)=S_A\coloneqq \sum_{P\in \mathcal{S}(A)}\forestpoly(P)\text{ modulo } \qsym_n^+,\] where $\mathcal{S}(A)$ consists of those $P\in \psymb(A)$ such that $\supp(P)=[n-1]$.
By \cite[Theorem 1.3]{DS}, recalled in Equation~\eqref{eq:ds_and_qsym}, we know that 
\[\ds{\sum_{W\in A} \slide(W)}{n}=S_A(1,\ldots,1).\]
 By applying Proposition~\ref{prop:all_1s_tree}, we obtain
\begin{align*}
\ds{\sum_{W\in A}\slide(W)}{n}&=\sum_{P\in \mathcal{S}(A)}\forestpoly(P)(1,\ldots,1)=\sum_{P\in \mathcal{S}(A)}|\decreasing(P)|\\
&=\sum_{P\in \mathcal{S}(A)}|\mathcal{C}_P|.
\end{align*}
By the definition of $\mathcal{S}(A)$, this sum is precisely the number of words $W\in A$ such that $\psymb(W)$ has support $[n-1]$. By Proposition~\ref{prop:correspondence_and_parking}, this is the number of $\wf$-parking words in $A$.
\end{proof}

\section{Applications}
\label{sec:applications}

\subsection{Combinatorial interpretation of $a_w$}
\label{subsec:combinatorial_interpretation}

We can finally prove Theorem~\ref{th:aw_as_parking}.
Given that our $\Omega$-parking procedure takes words in 
$\injwords{\Zaug}$ as input, to apply it to reduced words (which are words in $\bZ$) we first transform them into words in $\injwords{\Zaug}$.
This is done by replacing all instance of the letter $i\in \bZ$ by $\lf{i}{1}$, $\lf{i}{2},\dots$ from left to right.
So for instance the reduced word $1343$ for $w=21543$ becomes $\lf{1}{1}\lf{3}{1}\lf{4}{1}\lf{3}{2}$.

\begin{remark}
The preceding transformation breaks ties between equal letters in words in $\bZ$ in a specific manner. 
It is worth emphasizing that when dealing with reduced words, how we relabel the letters with equal valuest plays no role  since we never get to compare these exponents.
\end{remark}

\begin{theorem}
\label{th:the_theorem}
Let $w\in S_n$ have length $n-1$. Then $a_w$ is equal to the number of reduced words of $w^{-1}$ that are $\wf$-parking words.
\end{theorem}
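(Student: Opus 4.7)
The plan is to invoke Theorem~\ref{th:slide_to_parking} with $A=\red(w^{-1})$, regarded as a subset of $\injwords{\Zaug}$ via the left-to-right tie-breaking convention (each occurrence of $i$ in a reduced word becomes $\lf{i}{1},\lf{i}{2},\ldots$). Combining Equation~\eqref{eq:aw_via_ds} with the Billey--Jockusch--Stanley formula~\eqref{eq:schub_def} yields
\[
a_w \;=\; \ds{\schub{w}}{n} \;=\; \ds{\sum_{\mathbf{i}\in\red(w^{-1})}\slide(\mathbf{i})}{n},
\]
so if the hypotheses of Theorem~\ref{th:slide_to_parking} are met for this $A$, the conclusion is exactly the desired equality. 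The first two hypotheses are immediate: $\ell(w^{-1})=\ell(w)=n-1$, and since $w\in S_n$ the only simple reflections that can appear are $s_1,\ldots,s_{n-1}$, so all letter-values lie in $[n]$.

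The crux is to verify that $\red(w^{-1})$ is stable under $\wfequiv$. Since $\wfequiv$ is generated by the elementary swaps $\simwf$, it is enough to show that each such swap both preserves reducedness and represents the same underlying permutation. By Proposition~\ref{prop:rootlist_separation}, a swap $UabV\simwf UbaV$ requires the adjacent letters $a,b\in\Zaug$ to be separated by two elements of $\rootlist(U)$. The key geometric claim I would prove is that this separation forces $|\mathrm{val}(a)-\mathrm{val}(b)|\geq 2$. Granted this, the swap becomes a Coxeter commutation $s_is_j=s_js_i$ with $|i-j|\geq 2$, which preserves both length and product in $S_\bZ$, and hence maps $\red(w^{-1})$ into itself. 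Applying Theorem~\ref{th:slide_to_parking} would then deliver $a_w$ as the count of $\wf$-parking reduced words of $w^{-1}$.

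The geometric claim will be established by a short case analysis. The case $\mathrm{val}(a)=\mathrm{val}(b)$ does not arise, since a reduced word cannot contain two adjacent equal letters ($s_i^2=e$). If $\mathrm{val}(a)=i$ and $\mathrm{val}(b)=i+1$, then the open interval $(a,b)$ in $\Zleaves\sqcup\Zaug$ consists only of letters of the form $\lf{i}{\ast}$, $\underline{i+1}$, and $\lf{i+1}{\ast}$. Because $\supp(U)$ decomposes into maximal intervals of consecutive integers and each tree of $\psymb(U)$ contributes exactly one element to $\rootlist(U)$, a quick case check on $\{i,i+1\}\cap\supp(U)$ shows the following: if $i,i+1\in\supp(U)$ they lie in a common tree, yielding at most one root label in $(a,b)$; if exactly one of $i,i+1$ lies in $\supp(U)$, the tree containing it again contributes at most one; and if neither lies in $\supp(U)$, the only rootlist element in $(a,b)$ is the leaf symbol $\underline{i+1}$. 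In every subcase $(a,b)$ meets $\rootlist(U)$ in at most one element, contradicting the hypothesized separation by two. I expect this case analysis, mild as it is, to be the only delicate step; everything else is an assembly of tools already built, and once stability is in hand Theorem~\ref{th:slide_to_parking} closes the argument.
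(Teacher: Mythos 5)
Your proposal is correct and follows the same route as the paper's proof: express $a_w$ via divided symmetrization and the Billey--Jockusch--Stanley slide expansion, then verify the hypotheses of Theorem~\ref{th:slide_to_parking} with $A=\red(w^{-1})$, the crux being $\wfequiv$-stability. The only difference is that you carefully prove the geometric case analysis showing a $\simwf$-swap forces $|\mathrm{val}(a)-\mathrm{val}(b)|\geq 2$, whereas the paper asserts (via Proposition~\ref{prop:rootlist_separation}) that commutation-stable sets are automatically $\wfequiv$-stable without spelling out this verification.
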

\begin{proof}
Recall from~\eqref{eq:schub_def} that \[\schub{w} =\sum_{\mathbf{i}\in \red(w^{-1})}\slide(\mathbf{i}).\] 
 Now any set of words stable under commutations $ab \leftrightarrow ba$ for $|\mathrm{val}(a)-\mathrm{val}(b)|>1$ is automatically stable under $\wfequiv$ thanks to Proposition~\ref{prop:rootlist_separation}. In particular, $\red(w^{-1})$ is stable under $\wfequiv$. Since $w\in S_n$ has length $n-1$, all words in $\red(w^{-1})$ have length $n-1$ and letters in $[n]$ (in fact, in  $[n-1]$). We can thus apply Theorem~\ref{th:slide_to_parking} to get the result.
\end{proof}

An example is given in the introduction. We will come back to this interpretation in Section~\ref{sec:the_rest} so as to relate it to several results from ~\cite{NT20}.

\subsection{Multiplication of forest polynomials}
\label{subsec:mult}

Given nonempty words $U=u_1\cdots u_a$ and $V=v_1\cdots v_b$, 
let $W=w_1\cdots w_{a+b}\coloneqq u_1\cdots u_a v_1\dots v_b$ be their concatenation.
The multiset of shuffles $\mathsf{Sh}(U,V)$ is defined as 
\[
\{
(w_{\sigma(1)},\dots, w_{\sigma(a+b)})\suchthat \sigma\in S_{a+b}, \sigma^{-1}(1)<\cdots<\sigma^{-1}(a) \text{ and }\sigma^{-1}(a+1)<\cdots < \sigma^{-1}(a+b)\}.
\]
Given sets of words $\mathcal{C}$ and $\mathcal{D}$, we extend this definition and let $\mathsf{Sh}(\mathcal{C},\mathcal{D})$ to be the (multiset) union of all $\mathsf{Sh}(U,V)$ where $U\in \mathcal{C}$, $V\in \mathcal{D}$.

\begin{theorem}
\label{th:mult_forest_poly}
Let $F_1,F_2\in \IF$. Consider the basis expansion
\[\forestpoly_{F_1}\forestpoly_{F_2}=\sum_{G\in \IF} b_{F_1,F_2}^G\forestpoly_G.\]
Then the coefficients $b_{F_1,F_2}^G$ are in $\bZ_{\geq 0}$.
\end{theorem}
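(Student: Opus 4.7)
The plan is to combine the slide expansion of forest polynomials (Proposition~\ref{prop:forests_into_slides}) with a shuffle-type product rule for slide polynomials, and then apply Proposition~\ref{prop:from_slides_to_forests} to convert a $\wfequiv$-stable sum of slide polynomials into a nonnegative sum of forest polynomials. Concretely, for $i=1,2$ write $\forestpoly_{F_i}=\sum_{W\in\mathcal{C}_i}\slide(W)$, where $\mathcal{C}_i$ denotes the $\wfequiv$-class whose P-symbol is $\rho_{F_i}$ (the coincidence of this class with the indexing set of Proposition~\ref{prop:forests_into_slides} follows from the bijectivity of $\wfcorr$). The key intermediate input is a shuffle product formula for slide polynomials, $\slide(U)\slide(V)=\sum_{W}\slide(W)$ summed over a multiset of shuffles of $U$ and $V$ in $\injwords{\Zaug}$, where letter collisions are resolved by the standardization afforded by $\Zaug$. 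Combining these one obtains
\[
\forestpoly_{F_1}\forestpoly_{F_2}=\sum_{W\in A}\slide(W),
\]
where $A\subset\injwords{\Zaug}$ is the multiset of all shuffles of words from $\mathcal{C}_1$ with words from $\mathcal{C}_2$.

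The crux is to show that $A$ is stable under $\wfequiv$. Fix $W\in A$ arising from $(U,V)\in\mathcal{C}_1\times\mathcal{C}_2$ and retain the bookkeeping of which letters come from $U$ and which from $V$. Consider an elementary move $W=XabY\simwf XbaY=:W'$ witnessed by two elements of $\rootlist(X)$ separating $a$ and $b$ in the sense of Proposition~\ref{prop:rootlist_separation}. If $a$ and $b$ are both from $U$ then they are necessarily consecutive in $U$; writing $U=X_U abY_U$ and invoking Lemma~\ref{lem:anti_monoid_congruence} (which passes the separation property from the ambient context down to the subword obtained by restricting to $U$-letters), we conclude $U':=X_U baY_U\wfequiv U$, so $U'\in\mathcal{C}_1$, and reinserting the $V$-letters in their original positions exhibits $W'$ as a shuffle of $U'$ with $V$. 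The case where both letters come from $V$ is symmetric. If $a$ and $b$ come from different original words, then $W'$ is simply another shuffle of the same pair $(U,V)$. In every case $W'\in A$.

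Once $A$ is known to be $\wfequiv$-stable, Proposition~\ref{prop:from_slides_to_forests} yields $\sum_{W\in A}\slide(W)=\sum_{P\in\psymb(A)}\forestpoly(P)$, and by Proposition~\ref{prop:betaP_is_betaF} each summand equals $\forestpoly_G$ for the underlying indexed forest $G$. Grouping by shape delivers the required nonnegative integer expansion. The main obstacle is the stability verification: one must justify both the shuffle product rule for slide polynomials in the $\Zaug$-setting, ensuring multiplicities are correctly tracked, and the passage of rootlist separations from $W$ down to the subwords $U$ and $V$, for which Lemma~\ref{lem:anti_monoid_congruence} is the decisive technical tool. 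A secondary bookkeeping obstacle is the correct handling of shuffle multiplicities so that $A$ is viewed as a multiset rather than a set, which is exactly what is needed for the structure constants $b_{F_1,F_2}^G$ to come out as nonnegative integers rather than mere nonnegative rationals.
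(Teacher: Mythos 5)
Your proposal follows the paper's own proof essentially step for step: expand each $\forestpoly_{F_i}$ as a sum of slide polynomials over the $\wfequiv$-class of a chosen LBS labeling, apply the shuffle product rule for slide polynomials, and then show stability of the multiset of shuffles under $\simwf$ using Proposition~\ref{prop:rootlist_separation} and Lemma~\ref{lem:anti_monoid_congruence}, finishing via Proposition~\ref{prop:from_slides_to_forests}. The one place you gesture rather than argue—``letter collisions are resolved by the standardization afforded by $\Zaug$''—is handled explicitly in the paper by replacing $P_2$ with the labeling $\lf{i}{j}\mapsto\lf{i}{j+N}$ for large $N$, which makes the label sets of $P_1$ and $P_2$ disjoint (so concatenations lie in $\injwords{\Zaug}$) without changing $\forestpoly(P_2)$; it is worth spelling out this WLOG, but once done, your argument is correct and is the same as the paper's.
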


\begin{proof}
We write $\forestpoly_{F_i}=\forestpoly(P_i)$ for $i=1,2$, for some $P_i\in\lbs(F_i)$. We assume without loss of generality that the node labels in $P_1$ and $P_2$ are disjoint. 
Indeed, changing all node labels $\lf{i}{j}$ in $P_2$ to $\lf{i}{j+N}$ preserves the property of being in $\lbs(F_2)$.
So, for $N$ large enough we have disjoint sets of labels.

Now $\forestpoly(P_i)$ is the sum of all $\slide(W)$ for $W\in \mathcal{C}_{P_i}$. 
Furthermore the product $\slide(W)\slide(W')$ (where $WW'\in\injwords{\Zaug}$) is the sum of $\slide(X)$ for all words $X$ in the shuffle $\mathsf{Sh}(W,W')$; see~\cite{NT23}. Therefore 
\[\forestpoly_{F_1}\forestpoly_{F_2}=\sum_{W\in\mathsf{Sh}(\mathcal{C}_{P_1},\mathcal{C}_{P_2})}\slide(W).
\]

By Proposition~\ref{prop:from_slides_to_forests} one needs to show that $A=\mathsf{Sh}(\mathcal{C}_{P_1},\mathcal{C}_{P_2})$ is stable under $\wfequiv$. 
We only need to show that $A$ is stable under $\simwf$. 

Let $W\in\mathsf{Sh}(W_1,W_2)$ for $W_i\in\mathcal{C}_{P_1}$, $i=1,2$, and assume there exists a factorization $W=UabV$ such that $W\simwf UbaV$. 
We need to show $UbaV\in \mathsf{Sh}(\mathcal{C}_{P_1},\mathcal{C}_{P_2})$. 
If $a$ and $b$ are letters in $W_1,W_2$ respectively, or in $W_2,W_1$ then $UbaV\in\mathsf{Sh}(W_1,W_2)$ by definition of shuffles. 

Otherwise $a$ and $b$ are letters in $W_i$ for some $i\in\{1,2\}$. 
We may assume $i=1$ without loss of generality. We have then a factorization $W_1=U_1abV_1$. By Lemma~\ref{lem:anti_monoid_congruence}, we have $W_1\simwf W'_1$ where $W'_1=U_1baV_1$. In particular $W'_1\in\mathcal{C}_{P_1}$, and thus $UbaV\in\mathsf{Sh}(W'_1,W_2)$ as wanted.
\end{proof}

\subsection{Multivariate mixed Eulerian numbers}
\label{subsec:mme}

Fix $n$ a positive integer.
Given $\bfc=(c_1,\dots,c_{n-1})\in \nvectsn{n-1}$ of weight $n-1$ we define 
\[
y_{\bfc}\coloneqq x_1^{c_1}(x_1+x_2)^{c_2}\cdots (x_1+\cdots+x_{n-1})^{c_{n-1}}.
\]
This given, define the \emph{multivariate mixed Eulerian number} $A_{\bfc}(x_1,\dots,x_{n-1})$ as follows
\begin{align}
\label{eq:def_multivariate_me}
	A_{\bfc}(x_1,\dots,x_{n-1})= y_{\bfc} \mod \qsym_n^+.
\end{align}
Recall we take representatives modulo $\qsym_n^+$ in $\bQ\{\alpx^{\bfc}\suchthat \bfc\in \abb{n}\}$.

Our choice of nomenclature is explained by the fact that setting $x_i=1$ for all $1\leq i\leq n-1$ recovers Postnikov's mixed Eulerian numbers \cite[Section 16]{Pos09}. Furthermore, setting $x_i=q^{i-1}$ for $1\leq i\leq n-1$ recovers the remixed Eulerian numbers introduced by the authors \cite{NT21,NTremixed}.

Consider the alphabet $\bZ_{\bfc}=\displaystyle\bigsqcup_{i=1}^{n-1} \{\lf{i}{j}\suchthat 1\leq j\leq c_i\}$.
Let $\injective_{\bfc}$ denote the set of injective words of length $n-1$ in the alphabet $\bZ_{\bfc}$. It is clear that $|\injective_{\bfc}|=(n-1)!$.

\begin{proposition}
The mixed Eulerian number $A_{\bfc}(1^{n-1})$ is equal to the number of words in $\injective_{\bfc}$ that are $\wf$-parking words.
\end{proposition}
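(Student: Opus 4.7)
The plan is to apply Theorem~\ref{th:slide_to_parking} to the set $A=\injective_{\bfc}$, which requires verifying the hypotheses of that theorem and identifying $\sum_{W\in A}\slide(W)$ with $y_{\bfc}$.

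First I would check the stability of $\injective_{\bfc}$ under $\wfequiv$. By Proposition~\ref{prop:rootlist_separation}, $\wfequiv$ is generated by adjacent swaps of letters, so any two $\wfequiv$-equivalent words use the same multiset of letters in $\Zaug$. Since $\injective_{\bfc}$ consists of all words using each letter of $\bZ_{\bfc}$ exactly once, it is automatically $\wfequiv$-stable. Moreover every letter of $\bZ_{\bfc}$ has value in $[n-1]\subseteq[n]$, and every word in $\injective_{\bfc}$ has length $|\bfc|=n-1$, so the remaining hypotheses of Theorem~\ref{th:slide_to_parking} are met.

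The core of the proof is the identity
\[
y_{\bfc}=\sum_{W\in \injective_{\bfc}}\slide(W).
\]
For a single letter $\lambda\in\Zaug$ of value $i$ the slide polynomial is simply $\slide(\lambda)=x_1+\cdots+x_i$ (no descents to consider), so directly from the definition $y_{\bfc}=\prod_{\lambda\in\bZ_{\bfc}}\slide(\lambda)$. The iterated product of slides is rewritten through the shuffle formula $\slide(U)\slide(V)=\sum_{X\in\mathsf{Sh}(U,V)}\slide(X)$ valid whenever $UV\in\injwords{\Zaug}$ (already recalled in the proof of Theorem~\ref{th:mult_forest_poly}). Applied to the $n-1$ single-letter factors indexed by $\bZ_{\bfc}$, this rewrites $y_{\bfc}$ as a sum of $\slide(W)$ over all iterated shuffles $W$, which run through every permutation of $\bZ_{\bfc}$, i.e.\ through every element of $\injective_{\bfc}$.

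Combining these two ingredients with equation~\eqref{eq:ds_and_qsym} (which, starting from the definition~\eqref{eq:def_multivariate_me} of $A_{\bfc}$, gives $A_{\bfc}(1^{n-1})=\ds{y_{\bfc}}{n}$), an application of Theorem~\ref{th:slide_to_parking} yields
\[
A_{\bfc}(1^{n-1})=\ds{y_{\bfc}}{n}=\ds{\sum_{W\in\injective_{\bfc}}\slide(W)}{n}= |\{W\in\injective_{\bfc}\suchthat W\text{ is a }\wf\text{-parking word}\}|,
\]
which is the desired identity. No step poses a genuine obstacle; the only conceptual observation is the factorization $y_{\bfc}=\prod_{\lambda\in\bZ_{\bfc}}\slide(\lambda)$, which encodes precisely the choice of the alphabet $\bZ_{\bfc}$ and opens the door to the iterated shuffle expansion.
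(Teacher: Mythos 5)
Your argument is correct and follows essentially the same route as the paper's proof: factor $y_{\bfc}$ as $\prod_{a\in\bZ_{\bfc}}\slide(a)$, expand via the shuffle rule for slide polynomials to get $y_{\bfc}=\sum_{W\in\injective_{\bfc}}\slide(W)$, note $\wfequiv$-stability of $\injective_{\bfc}$, and invoke Theorem~\ref{th:slide_to_parking}. You spell out a few steps the paper leaves implicit — verifying the $\wfequiv$-stability via invariance of the letter multiset under $\simwf$, and making explicit the passage from the divided symmetrization $\ds{y_{\bfc}}{n}$ to $A_{\bfc}(1^{n-1})$ via equations~\eqref{eq:def_multivariate_me} and~\eqref{eq:ds_and_qsym} — but the underlying argument is identical.
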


\begin{proof}
 Write $y_{\bfc}$ in the form $\displaystyle\prod_{a\in \bZ_{\bfc}} \slide(a)$. 
By the multiplication rule for slide polynomials recalled in the proof of Theorem~\ref{th:mult_forest_poly}, we have
\[
y_c=\sum_{W\in\injective_{\bfc}} \slide(W).
\]
Since $\injective_{\bfc}$ is closed under $\wfequiv$,  the answer follows from Theorem~\ref{th:slide_to_parking}.
\end{proof}

From the proof of Theorem~\ref{th:slide_to_parking}, we in fact get a combinatorial expansion of $A_c(x_1,\dots,x_{n-1})$: namely, it is the sum of $\forestpoly(P)$ for all LBS trees $P$  with support $\{1,\ldots,n-1\}$ and with nodes labeled by $\bZ_{\bfc}$. For instance, if $c=(0,2,0,2)$ so that $\bZ_{\bfc}=\{\lf{2}{1},\lf{2}{2},\lf{4}{1},\lf{4}{2}\}$, there are five such LBS trees depicted in Figure~\ref{fig:ac_0202_bis}, and the resulting polynomial is
\[
A_{\bfc}(x_1,\dots,x_4)=x_1x_2x_3x_4+2x_1x_2^2(x_3+x_4)+x_1^2x_2(x_2+x_3+x_4)+x_1x_2^3.
\]

\begin{figure}[!ht]
\centering
	\includegraphics[width=\textwidth]{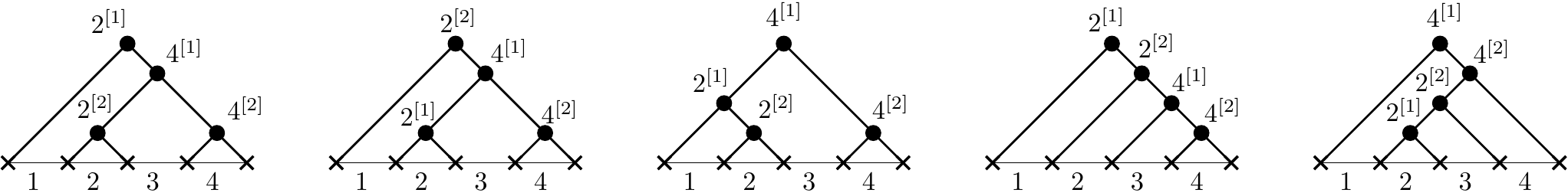}
\caption{All LBS tree with labels $\bZ_{\bfc}=\{\lf{2}{1},\lf{2}{2},\lf{4}{1},\lf{4}{2}\}$}
\label{fig:ac_0202_bis}
\end{figure}

These trees can be connected to the ones in~\cite[Section 7.1]{NTremixed}.  
A bilabeled tree of content $\bfc$ is a triplet $(T,\decreasing,lr)$ with $T$ a binary tree of size $n-1$, $\decreasing$ a decreasing labeling of nodes, and $lr$ a labeling defined on nodes and leaves of $T$ with the following conditions: $lr$ is a bijection to $\{1,\ldots,2n-1\}$, satisfying the local binary search condition (label of any node is strictly larger than the label on its left child and strictly smaller than the label on its right child) and the leaf labels $1=\ell_1<\cdots<\ell_n=2n-1$ from left to right  are determined by $c_i=\ell_{i+1}-\ell_i-1$. That is, $\ell_i=i+c_1+\cdots+c_{i-1}$ for $1\leq i\leq n$. 
We will consider only the labeled trees $(T,lr)$, and illustrate those with content $(0,2,0,2)$ in Figure~\ref{fig:ac_0202}.

\begin{proposition} Pairs $(T,lr)$ of content $\bfc$ are in bijection with LBS trees $P$  with support $\{1,\ldots,n-1\}$ and nodes labeled by $\bZ_{\bfc}$.
\end{proposition}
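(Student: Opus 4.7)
The proof is essentially a relabeling exercise. The key observation is that the totally ordered set $\bZ_{\bfc} \sqcup \{\underline{1}, \ldots, \underline{n}\}$, equipped with the restriction of the order on $\Zleaves \sqcup \Zaug$, has exactly $2n-1$ elements, which in increasing order read
\[
\underline{1} < \lf{1}{1} < \cdots < \lf{1}{c_1} < \underline{2} < \lf{2}{1} < \cdots < \underline{n}.
\]
The unique order-preserving bijection $\phi$ from this set to $\{1, \ldots, 2n-1\}$ sends $\underline{i} \mapsto \ell_i = i + c_1 + \cdots + c_{i-1}$ and $\lf{i}{j} \mapsto \ell_i + j$. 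The plan is to show that the two classes of objects coincide under this relabeling.

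Given an LBS tree $P = (T, \rho)$ on $\{1, \ldots, n-1\}$ with $\rho: \internal(T) \to \bZ_{\bfc}$, I would produce $(T, lr)$ by keeping the binary tree $T$, labeling its leaves from left to right with $\ell_1 < \cdots < \ell_n$, and setting $lr(u) = \phi(\rho(u))$ for each internal node. This yields a bijective labeling onto $\{1, \ldots, 2n-1\}$ with the required leaf spacings $\ell_{i+1} - \ell_i - 1 = c_i$. The defining conditions of $\lbs(T)$ translate cleanly: the strict inequalities $\rho(v_L) < \rho(u) < \rho(v_R)$ for left and right children become the corresponding strict inequalities on $lr$, while the value condition $\mathrm{val}(\rho(u)) \in \interval(u, T)$ becomes $\ell_a < lr(u) < \ell_{b+1}$, where $\underline{a}$ and $\underline{b+1}$ are the extremal leaves in the subtree of $u$. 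Together, these are precisely the local binary search condition on $(T, lr)$ extended to the leaf labels.

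For the inverse direction, given $(T, lr)$ of content $\bfc$, the leaf labels are forced to be $\ell_1 < \cdots < \ell_n$ (since their values and left-to-right order are fixed by the content condition and the local binary search property), and applying $\phi^{-1}$ to the node labels yields a candidate $\rho: \internal(T) \to \bZ_{\bfc}$. The only nontrivial point is that the value condition on $\rho$ is automatic: a straightforward induction on subtree depth, using the local binary search condition on $lr$, shows that the label of any internal node sits strictly between the labels of the extremal leaves of its subtree, which after applying $\phi^{-1}$ says exactly $\mathrm{val}(\rho(u)) \in \interval(u, T)$. The main obstacle is nothing deep but rather careful bookkeeping of the canonical labeling convention: namely, the $k$-th internal node in inorder has canonical label $k$ and lies between leaves $\underline{k}$ and $\underline{k+1}$, which aligns with the fact that the labels $\lf{k}{1}, \ldots, \lf{k}{c_k}$ in $\bZ_{\bfc}$ are precisely the elements of our ordered alphabet strictly between $\underline{k}$ and $\underline{k+1}$.
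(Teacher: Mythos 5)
Your proof is correct and takes essentially the same approach as the paper's: both use the unique order-preserving bijection between $\bZ_{\bfc}$ and $[2n-1]\setminus\{\ell_1,\dots,\ell_n\}$ to convert one kind of labeling to the other. You simply spell out more carefully the bookkeeping the paper relegates to ``it is checked,'' in particular the observation that the $\interval(u,T)$ value condition on the LBS side is equivalent, after relabeling, to the complete-tree binary-search condition on the $(T,lr)$ side.
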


\begin{proof}
Let $\ell_1<\ldots<\ell_n$ be the leaf labels given by $\bfc$ as above. There is a unique order preserving bijection between $[2n-1]\setminus \{\ell_1,\dots,\ell_n\}$ and $\bZ_{\bfc}$, as these are both totally ordered sets of size $n-1$.

Now given a pair $(T,lr)$ relabel its nodes using the bijection above resulting in an labeling $\kappa$ with image $\bZ_{\bfc}$. 
By forgetting the leaf labels and considering $T$ as an indexed tree of support $[n-1]$, we have $P=(T,\kappa)$ as the desired LBS tree. It is checked that this is the desired bijection. 
\end{proof}
In the case $\bfc=(0,2,0,2)$ the bijection is illustrated in Figures~\ref{fig:ac_0202_bis} and~\ref{fig:ac_0202} with the trees in the same relative position  related via the bijection.

\begin{figure}[!ht]
\centering
	\includegraphics[width=\textwidth]{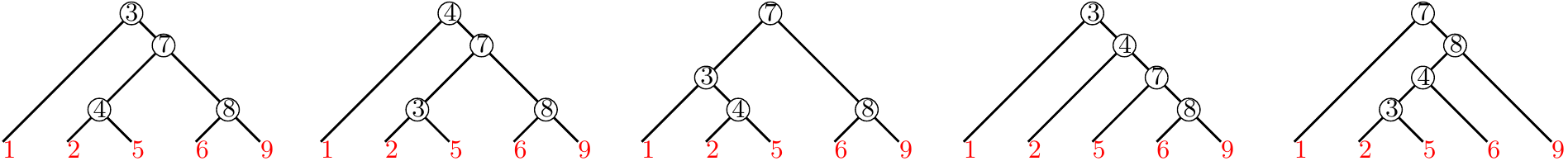}
\caption{
\label{fig:ac_0202} All labeled trees $(T,lr)$ with content $(0,2,0,2)$}
\end{figure}

\section{Further remarks}
\label{sec:the_rest}

\subsection{Back quasisymmetric functions}
Taking a leaf out of \cite{Lam18}, the authors studied we studied the space $\backquasi$ of back stable quasisymmetric functions in \cite{NT23}.
By allowing our labelings $\kappa$ in Definition~\ref{def:forest} to take values in $\bZ$ rather than $\bZ_{\geq 1}$, we obtain a power series $\backforest_F\in \backquasi$ for $F\in\IF$ that we naturally call the \emph{back stable forest polynomial}.
In particular note that $\backforest_F$  is never $0$.

It was shown in \cite{NT23} that the back stable slide polynomials are a basis for $\backquasi$. The slide expansion for forest polynomials in Proposition~\ref{prop:forests_into_slides} permits the obvious back stable analogue. With a little effort one can show that back stable forest polynomials also give a basis for $\backquasi$.
Our Theorem~\ref{th:main_3} from the introduction now translates into the statement that back stable Schubert polynomials expand nonnegatively into back stable forest polynomials, with the underlying combinatorics, relying on the $\wfcorr$-correspondence, entirely unchanged.

The space $\backquasi$ comes equipped with several maps that allow one to conveniently recover the non-back stable story.
Recall the shift map $\gamma$ from Section~\ref{subsec:recurrence} that shifts the support of an indexed forest $F$ one unit to the right, i.e. adds $1$ to each element of $\supp(F)$. Let us abuse notation and refer to the map on the polynomial ring $\bQ[\alpx]$ sending $x_i\mapsto x_{i+1}$ for all $i\in \bZ$ by $\gamma$ as well.
It is then the case that
\begin{align}
\gamma(\backforest_F)=\backforest_{\gamma(F)}.
\end{align}
Under the specialization $x_i=0$ for $i\leq 0$, we see that $\backforest_F$ becomes $\forestpoly_F$ (which is nonzero if and only $F\in \IF_+$). In particular note that the back stable series $\backforest_F$ does contain all information about $\forestpoly_{\gamma^{i}(F)}$ for all $i\in \bZ$.

\subsection{Combinatorics of the numbers $a_w$}

In \cite{NT20}, we obtained several results on the numbers $a_w$ using a different approach. It is interesting to revisit them using mainly the combinatorial interpretation from Theorem~\ref{th:aw_as_parking}. Let us write $S'_n$ for the set of permutations of $S_n$ of length $n-1$.

\begin{enumerate}
[align=parleft,left=0pt,label=(\alph*)]
\item Suppose $w\in S'_n$ is \emph{$m$-Grassmannian}, i.e. its unique descent is in position $m\geq 1$. 
Let $\lambda(w)\coloneqq (\lambda_1,\dots,\lambda_k)$ be the partition of $n-1$ determined from $\lcode(w)$ by omitting the $0$s.
Then $a_w$ counts the number of standard Young tableaux of shape $\lambda(w)$ with $m-1$ descents \cite[Theorem 6.4.2]{NT20}.

This can now be understood bijectively with our interpretation, let us sketch the proof. Reduced words of $w^{-1}$ are  in bijection with standard Young tableaux of shape $\lambda(w)$: Explicitly, $\mathbf{i}=i_1\cdots i_\ell\in\red(w^{-1})$ determines the tableau $T(\mathbf{i})$ wherein cells with \emph{content}\footnote{Recall that the content of a cell in row $a$ and column $b$ in the Young diagram of $\lambda$ is equal to $b-a$.} $k$ contain indices $j$ such that $i_j=m+k$. 

For each such $\mathbf{i}$,  that $w$ is Grassmannian forces $\psymb{\mathbf(i)}$ to be an indexed linear tree with support $[m-d,m+n-2-d]$ where $d$ is the number of indices $j$ such that $i_j>i_{j+1}$. But $d$ correspond precisely to the number of descents in $T(\mathbf{i})$ via the above bijection, so that $\mathbf{i}$ is $\Omega$-parking when $d=m-1$. 
Thus Theorem~\ref{th:the_theorem} gives our previous interpretation for $a_w$ for $w$ Grassmannian.

\item Let $w$ be a permutation in $S'_n$ such that the letters occurring in any element (equivalently, all elements) of $\red(w^{-1})$ form an initial interval $\{1,\ldots,p-1\}$ for some $p\geq 2$. 
Define $1^{j}\times w$ to be the permutation in $S_n$ given by $i+j\mapsto w(i)+j$ for $i=1,\ldots,p-1$ and having fixed points elsewhere. Note that $\red(1^{j}\times w)$ is obtained by applying $i\to i+j$ to letters in all words in $\red(w)$.  
Then \cite[Theorem 1.3.4]{NT20} states
\begin{equation}
\label{eq:nice_summation}
\sum_{j\geq 0}\schub{1^j\times w}(1,\ldots,1)t^j=\frac{{\displaystyle{\sum_{m=0}^{n-p-1}a_{1^m\times w}t^m}}}{(1-t)^n}.
\end{equation}
Let us now see how to retrieve this result by the approach in this paper.  We know \[\schub{u}=\sum_{P\in\red(u^{-1})/\wfequiv}\forestpoly(P)\] for any $u\in S_n$. Now let $u=1^j\times w$ as above. Then the LBS forests occurring in the sum are necessarily trees since, all words in $\red(u^{-1})$ have their letter values forming the interval $[j+1,j+p-1]$ in $\bZ$. 
So the supports of any such $P$ have the form $[1+m,n+m-1]$ for $m\in\bZ$; since this support contains $[j+1,j+p-1]$ we must have $p+j-n\leq m\leq j$. Furthermore $m\geq 0$ since $\forestpoly(P)=0$ otherwise.
\begin{align*}
\schub{u}&=\sum_{m=\min(0,p+j-n)}^{j}\sum_{\substack{P\in\red(u^{-1})/\wfequiv\\ \supp(P)=[1+m,n+m-1]}}\forestpoly(P)
\end{align*}
Note that the term for $m=0$  is precisely the reduction of $\schub{w}$, and we deduce that for any term for $m\geq 0$ the reduction of $\schub{1^m\times w}$ by a clear translation property of the $\wf$-parking procedure. Evaluating at $x_i=1$ for $i\geq 1$ gives 
\begin{align*}
\schub{u}(1,\ldots,1)&=\sum_{m=\min(0,p+j-n)}^{j}\sum_{\substack{P\in\red(u^{-1})/\wfequiv\\ \supp(P)=[1+m,n+m-1]}}\forestpoly(P)(1,\ldots,1)\\
&=\sum_{m=\min(0,p+j-n)}^{j}\sum_{\substack{P\in\red(u^{-1})/\wfequiv\\ \supp(P)=[1+m,n+m-1]}}|\decreasing(P)|\binom{n+m-1}{n-1}\\
&=\sum_{m=\min(0,p+j-n)}^{j}a_{1^{j-m}\times w}\binom{n+m-1}{n-1}.
\end{align*}
We used Proposition~\ref{prop:all_1s_tree} in the second equality, and we recognized our interpretation for $a_{1^{j-m}\times w}$ in the last one. The final  result is  equivalent to \eqref{eq:nice_summation}, as it encodes the equality of coefficients of $t^j$ on both sides.


\item There exists a cyclic sum rule for the numbers $a_w$, given as \cite[Theorem 1.3.4]{NT20}. Starting from our interpretation of $a_w$ as $\wf$-parking words, this rule can be proved by a variant of Pollak's argument for the usual parking functions. This will be presented in a more general context in forthcoming work of the first author~\cite{NadParking}. 
\item We also have the three striking properties for all  $w\in S'_{n}$(\cite[Theorem 1.3.1]{NT20}): $a_w>0$,  $a_w=a_{w^{-1}}$, and $a_w=a_{w_0ww_0}$ where $w_0$ is the longest element in $S_n$.

 The first one now translates into the fact that there exists a $\wf$-parking word in $\red(w^{-1})$. 
 Despite the simplicity of this statement we have not been able to explicitly find such a word in general. 
 The second statement suggests a certain symmetry property of the $\Omega$-parking procedure that eludes us. The third one follows from an elementary left-right symmetry of our $\wf$-parking algorithm and is left to the reader.
\end{enumerate}

\subsection{More about forest polynomials}
 We have introduced forest polynomials in this work in order to solve a particular problem of Schubert calculus. These polynomials turned out to possess various nice properties, starting with the fact that they form a basis with positive integral structure constants \ref{th:mult_forest_poly}. 
  In upcoming work, we will prove several more combinatorial and algebraic properties of these polynomials. For instance, they will help us determine the harmonic polynomials for the ideal $\qsym_n^+$. The $\wfcorr$-correspondence plays a key rule in determining these harmonics and so does the cubical subdivision of the permutahedron discussed in \cite{NTremixed}.
  
  We would be remiss not to mention the recent works \cite{BG23,PS23} which, informally speaking, discuss geometric aspects of quasisymmetric polynomials or $\qsym_n^+$. We are yet to explore the connection between these works and our results, but that is an avenue worthy of investigation.
  
 
\appendix

\section{Proof of Theorem~\ref{th:forest_into_dual_forest}}
\label{app:sign_reversing}

Given a lower ideal $L$ in $F$, let $U$ denote its complement. 
We define $\max(L)$ (respectively $\min(U)$) to be the set of maximal  (respectively minimal) elements in $L$ (respectively $U$).
Here is the key definition guiding our involution.
\begin{definition}\label{def:exchangeable}
	Given a lower ideal $L$ in $F$, fix $(\kappa,L)\in \compl(F)$.
	We say that $v\in \internal(F)$ is \emph{exchangeable}  if one of the following 	conditions hold:
\begin{enumerate}[align=parleft,left=0pt..1.5em,label=\text{(\arabic*)}]
		\item \label{it1:exchangeable}$v\in \max(L)$ and either $v$ has no parent or $\kappa(v) \geq \kappa(\mathrm{parent}(v))$ and $v$ is a left child, or  $\kappa(v)>\kappa(\mathrm{parent}(v))$ and $v$ is a right child; 
		\item \label{it2:exchangeable} $v\in \min(U)$, $\kappa(v)>\rho_F(v)$,  and $\kappa(v)$ is both strictly greater than the label of its left child (provided it is in $\internal(F)$) and weakly greater  than the label of its right child (provided it is in $\internal(F)$) respectively. 
	\end{enumerate}
\end{definition}
As the name suggests, an exchangeable node can be transferred from $L$ to $U$, or vice versa, while maintaining the weight but flipping the sign.
Before employing this aspect, we need to ensure that we can find such nodes.

Let $\mathsf{Good}\subset \compl(F)$ contain $(\kappa,\emptyset)$ such that $\kappa(v)\leq \rho_F(v)$ for all $v$. Note that 
\begin{align}
\label{eq:why_care_for_good?}
\forestpoly_F=\sum_{(\kappa,\emptyset)\in \mathsf{Good}} \alpx_{\kappa}=\sum_{y\in \mathsf{Good}}\wt(y).
\end{align}
\begin{lemma}\label{lem:good_does_not_exist}
Fix $(\kappa,L)\in \compl(F)$. There exists an exchangeable node if and only if $(\kappa,L)\notin \mathsf{Good}$.
\end{lemma}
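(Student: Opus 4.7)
The plan is to handle the easy direction immediately and address the converse by splitting on whether $L$ is empty. The easy implication---from $(\kappa,L)\in\mathsf{Good}$ to the absence of exchangeable nodes---is direct: with $L=\emptyset$, condition~\ref{it1:exchangeable} is vacuous, and $\kappa(v)\leq \rho_F(v)$ for all $v$ rules out condition~\ref{it2:exchangeable}.

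For the converse with $L=\emptyset$ (so that some $v_0\in\internal(F)$ satisfies $\kappa(v_0)>\rho_F(v_0)$), I would repurpose the descent at the heart of the proof of Proposition~\ref{prop:betaP_is_betaF}: if $v$ has a left child $u_L$, the identity $\rho_F(u_L)=\rho_F(v)$ together with $\kappa(u_L)\geq\kappa(v)$ forces $\kappa(u_L)>\rho_F(u_L)$; if $v$ has only a right child $u_R$, the identity $\rho_F(u_R)=\rho_F(v)+1$ combined with the strict inequality $\kappa(u_R)>\kappa(v)$ gives $\kappa(u_R)>\rho_F(u_R)$. Iterating terminates at a terminal node $w\in \terminal(F)=\min(U)$ with $\kappa(w)>\rho_F(w)$, which automatically satisfies~\ref{it2:exchangeable} since its child constraints are vacuous.

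For the converse with $L\neq \emptyset$, I would first dispatch the obvious type-\ref{it1:exchangeable} situations: a root of $F$ inside $L$ lies in $\max(L)$, has no parent, and is therefore exchangeable. So we may assume every $v\in\max(L)$ has parent $p_v\in U$ with $\kappa(v)<\kappa(p_v)$ (left child) or $\kappa(v)\leq \kappa(p_v)$ (right child). The crucial ingredient is then the subtree bound
\[
\kappa(u) \;\geq\; \rho_F(u)+|\interval(u,F)| \qquad \text{for all } u\in L,
\]
proved by a routine induction on $|\interval(u,F)|$ using the strict/weak decrease of $\kappa$ down left/right $L$-edges and the base case $\kappa(u)>\rho_F(u)=\can_F(u)$ at a terminal $u\in L$. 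Applied at $v\in\max(L)$ and combined with the structural identities $\rho_F(p_v)=\rho_F(v)$ (if $v$ is a left child of $p_v$) or $\rho_F(v)=\can_F(p_v)+1$ (if $v$ is a right child), this yields $\kappa(p_v)>\rho_F(p_v)$. I would then descend from $p_v$ in the subforest induced by $U$: whenever the current $w\in U$ has $\kappa(w)>\rho_F(w)$ but $w\notin \min(U)$, some internal child of $w$ sits in $U$, and the same blend of $P$-partition inequalities and the subtree bound at any $L$-sibling shows that child inherits $\kappa>\rho_F$. The descent ends at some $w\in\min(U)$ with $\kappa(w)>\rho_F(w)$, and the fact that the internal children of $w$ lie in $\max(L)$ yet cannot be type-\ref{it1:exchangeable} translates exactly into the child inequalities in condition~\ref{it2:exchangeable} at $w$.

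The main obstacle is propagating $\kappa>\rho_F$ across the $L/U$ boundary, specifically when descending past a node whose $L$-sibling has a large subtree that inflates $\rho_F$ at the $U$-sibling. The subtree bound $\kappa(u)\geq \rho_F(u)+|\interval(u,F)|$ on $L$ is precisely what absorbs this inflation and closes the case analysis.
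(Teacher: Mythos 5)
Your proof is correct and takes a genuinely different route from the paper's. The paper fixes a single minimal $v\in U$ having a child in $L$ and splits into three cases according to which of its children lie in $L$; the two mixed cases are handled by tracing explicit paths (right edges from the $L$-child, then into the leftmost terminal of the $U$-child's subtree) and counting right edges to compare $\rho_F$-values at the endpoints. You instead distill the quantitative lemma $\kappa(u)\geq\rho_F(u)+|\interval(u,F)|$ for $u\in L$, and then run a single uniform descent through $U$ from the parent of a $\max(L)$-node to a $\min(U)$-node, with this bound absorbing the $\rho_F$-jump of size $|\interval(a,F)|+1$ each time the descent crosses a right edge whose left sibling $a$ roots an $L$-subtree. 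This subtree bound is precisely what the paper's path-tracing re-derives in situ in Cases B and C, so your version is more modular at the cost of one preliminary induction. Two small points worth spelling out when you write this up: (i) the inductive step of the subtree bound must go through the \emph{right} child when one exists (weak decrease plus $\rho_F(u_R)=\rho_F(u)+|\interval(u_L,F)|+1$), since the left-child route alone loses the right subtree's contribution; and (ii) in the $U$-descent, when both children of the current node lie in $U$ you must prefer the left child---the right $U$-child need not inherit $\kappa>\rho_F$, because its $\rho_F$ jumps past the left sibling and there is no subtree bound available on $U$.
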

\begin{proof}

One direction of the claim is immediate. If $(\kappa,L)\in \mathsf{Good}$, then no internal node satisfies the conditions in Definition~\ref{def:exchangeable}. 
So we focus on establishing the other direction.

Assume $(\kappa,L)\notin \mathsf{Good}$. 

Suppose first that each tree in $F$ is either entirely in $L$ or in $U$. If at least one tree is in $L$, its root is exchangeable by Definition~\ref{def:exchangeable}\ref{it1:exchangeable}. If all trees are in $U$, then $(\kappa,L)\notin \mathsf{Good}$ implies that there exists $v\in U$ such that $\kappa(v)>\rho_F(v)$, and $v$ can be assumed to be a terminal node. 
Then $v$ is exchangeable by Definition~\ref{def:exchangeable}\ref{it2:exchangeable}.

We can now assume that $F$ has a tree with vertices in both $L$ and $U$. Consider $v\in U$ with  at least one child in $L$, and minimal with respect to this condition. 
There are then three cases depending on which children of $v$ are in $L$.
\begin{enumerate}[align=parleft,left=0pt,label=\textbf{Case \Alph*}:]

\item Suppose that $u,w\in \max(L)$ are the left and right children of $v$ respectively.
In particular $v\in \min(U)$.
If $\kappa(u)\geq \kappa(v)$  then $u$ is exchangeable. 
If $\kappa(v)<\kappa(w)$, then $w$ is exchangeable. 
Otherwise we must have $\kappa(u)<\kappa(v)$ and $\kappa(v)\geq \kappa(w)$. 
The condition $\kappa(v)>\rho_F(v)$ is now implicit in the preceding inequalities as $\rho_F(v)=\rho_F(u)$, and $\kappa(u)>\rho_F(u)$ as $u\in L$.
Thus $v$ is exchangeable by Definition~\ref{def:exchangeable}\ref{it2:exchangeable}. 

\item 
Suppose $u\in\max(L)$ is the left child of $v$ and that the right child of $v$ is the node $w\in U$. Note that $v\notin \min(U)$ in this scenario, and that all children of $w$ are in $U$, by the minimality assumption on $v$.
If $\kappa(u)\geq \kappa(v)$, then $u$ is exchangeable by Definition~\ref{def:exchangeable}\ref{it1:exchangeable}. 

So suppose $\kappa(u)<\kappa(v)$.  Let $u'$ be the rightmost node reachable from $u$ by following right edges. Then we know that $\kappa(u)\geq \kappa(u')> \rho_F(u')$.
Now let $w'$ be the leftmost terminal node in the tree rooted at $w$ (in particular, $w'$ could potentially be $w$ itself). Clearly $w'\in \min(U)$.
We claim that $\kappa(w')>\rho_F(w')$, and thereby that $w'$ is exchangeable by Definition~\ref{def:exchangeable}\ref{it2:exchangeable}.
To see this, suppose first that there are $k\geq 1$ right edges on the unique path from $v$ to $w'$.
Then it must be the case that 
\begin{align}
	\rho_F(w')=\rho_F(u')+k+1.
\end{align}
Also since $v$ and $w'$ are both in $U$, given that we have strict increases down right edges, we must have
\begin{align}
	\kappa(w')\geq   \kappa(v)+k
\end{align}
Since $\kappa(v)>\kappa(u)>\rho_F(u')$, we thus infer that 
\begin{align}
	\kappa(w')> \rho_F(u')+1+k=\rho_F(w'),
\end{align}
which is what we wanted to show.

We now consider the case where $v$ has no right child (or equivalently that the right child of $v$ is a leaf). Thus we must have $v\in \min(U)$. If $\kappa(u)\geq \kappa(v)$, we again have that $u$ is exchangeable by Definition~\ref{def:exchangeable}\ref{it1:exchangeable}.  So we assume that $\kappa(u)<\kappa(v)$. 
Since $\rho_F(u)=\rho_F(v)$, it follows  that $v$ is exchangeable by Definition~\ref{def:exchangeable}\ref{it2:exchangeable}.

\item Now suppose $w\in \max(L)$ is the right child of $v$ and the left child of $v$ is the node $u\in U$. 
Note that $u\notin\min(U)$ in this scenario. The minimality assumption on $v$ implies that all descendants of $u$ are in $U$.
If $\kappa(v)<\kappa(w)$, then $w$ is exchangeable by Definition~\ref{def:exchangeable}\ref{it1:exchangeable}.
So assume that $\kappa(v)\geq \kappa(w)$.

Let $u'$ be the rightmost terminal node in the tree rooted at $u$ (again, $u'$ could very well be $u$). 
Clearly $u'\in \min(U)$. 
If there are $k\geq 0$ right edges in the path from $v$ to $u'$, then we know that 
\begin{align}
\kappa(u')\geq \kappa(v)+k.
\end{align}
Now we know that $\kappa(v)\geq \kappa(w)>\rho_F(w)$, implying that $\kappa(u')>\rho_F(w)+k>\rho_F(u')$.
Thus $u'$ is exchangeable by Definition~\ref{def:exchangeable}\ref{it2:exchangeable}. 

Suppose finally that left child of $v$ is not in $U$, i.e. it is a leaf.
Then $v\in \min(U)$. Again, if $\kappa(v)<\kappa(w)$, we are done by Definition~\ref{def:exchangeable}\ref{it1:exchangeable} as $w$ is exchangeable.
So suppose $\kappa(v)\geq \kappa(w)>\rho_F(w)$. 
It is then immediate that $\kappa(v)>\rho_F(v)$. 
Thus $v$ is exchangeable by Definition~\ref{def:exchangeable}\ref{it2:exchangeable}.
\end{enumerate}
This finishes the proof.
\end{proof}

Having established that exchangeable nodes exist in all bad cases, we are in position to prove Theorem~\ref{th:forest_into_dual_forest}.
\begin{proof}[Proof of Theorem~\ref{th:forest_into_dual_forest}]
	Fix an indexed forest $F$. By~\eqref{eq:why_care_for_good?}, we have to show
		\begin{align*}
		\sum_{y\in \compl(F)\setminus \mathsf{Good}} \wt(y)\mathrm{sign}(y)&=0.
	\end{align*}
	We do this via an involution on $\compl(F)$ that uses our notion of exchangeable nodes.
	
	We define a map $\Psi:\compl(F) \to \compl(F)$ as follows. Let $y=(\kappa,L)\in \compl(F)$.
	\begin{enumerate}
	\item If $y\in \mathsf{Good}$, then $\Psi(y)=y$.
	\item Otherwise let $v$ be the exchangeable node that occurs first in inorder.
	\begin{itemize}
	\item If $v\in \max(L)$, then define $\Psi(y)=(\kappa,L-\{v\})$. 
	\item If $v\in \min(U)$, then define $\Psi(y)= (\kappa,L+\{v\})$.
	\end{itemize}
	\end{enumerate}
	By Lemma~\ref{lem:good_does_not_exist}, we have that $\Psi$ is well defined.
	By its definition we see that $\Psi$ is weight-preserving on $\compl(F)$, and sign-reversing on $\compl(F)\setminus \mathsf{Good}$.
	
	All that remains to be verified is that $\Psi$ is indeed an involution.
	We leave it to the reader to check that if $v$ is the first exchangeable node in inorder for $y$, then it is also the first exchangeable node in inorder for $\Psi(y)$.
	In fact, if $v$ is an exchangeable node satisfying the criterion in Definition~\ref{def:exchangeable}\ref{it1:exchangeable} for $y$, then it becomes an exchangeable node satisfying the criterion in Definition~\ref{def:exchangeable}\ref{it2:exchangeable} for $\Psi(y)$, and vice versa.
\end{proof}

\bibliographystyle{alpha}
\bibliography{Biblio_mod_qsym}
\end{document}